\numberwithin{equation}{section}
\theoremstyle{plain}
\newtheorem{theorem}{Theorem}
\numberwithin{theorem}{section}
\newtheorem{proposition}[theorem]{Proposition}
\newtheorem{lemma}[theorem]{Lemma}
\newtheorem{corollary}[theorem]{Corollary}
\theoremstyle{definition}
\newtheorem{definition}[theorem]{Definition}
\theoremstyle{remark}
\newtheorem{remark}[theorem]{Remark}
\newtheorem*{acknowledgements}{Acknowledgments}
\newtheorem*{notations}{Notations and conventions}
\newcommand{\F}{{\mathbb F}}
\newcommand{\Z}{{\mathbb Z}}
\newcommand{\Q}{{\mathbb Q}}
\renewcommand{\H}{{\mathbb H}}
\newcommand{\ff}{{\text{\it ff\/}}}
\newcommand{\thh}{{T\kern -1pt H\kern -1pt H}}
\newcommand{\xr}{\xrightarrow}
\newcommand{\xl}{\xleftarrow}
\newcommand{\cok}{\operatorname{cok}}
\newcommand{\im}{\operatorname{im}}
\newcommand{\Ext}{\operatorname{Ext}}
\newcommand{\holim}{\operatornamewithlimits{holim}}
\newcommand{\Rlim}{\operatornamewithlimits{lim^1}}
\newcommand{\ADer}{\operatorname{ADer}}
\newcommand{\dlog}{\operatorname{dlog}}
\renewcommand{\:}{\colon}
\begin{document}
\title{Algebraic $K$-theory of the first Morava $K$-theory}
\author{Christian Ausoni}
\address{Mathematical Institute, University of M\"unster,
DE--48149 M\"unster, Germany}
\email{ausoni@uni-muenster.de}
\author{John Rognes} 
\address{Department of Mathematics, University of Oslo,
NO--0316 Oslo, Norway}
\email{rognes@math.uio.no}
\subjclass[2000]{19D55, 55N15}
\begin{abstract} 
For a prime $p \ge 5$, we compute the algebraic $K$-theory modulo $p$ and $v_1$ of the
mod $p$ Adams summand, using topological cyclic homology.
On the way, we evaluate its modulo
$p$ and $v_1$ topological Hochschild homology.
Using a localization sequence, we also compute the
$K$-theory modulo $p$ and $v_1$ of the first Morava
$K$-theory. 
\end{abstract}
\maketitle
\section{Introduction }

In this paper we continue the investigation from \cite{kkmodp-AR02} and
\cite{kkmodp-Au10} of the algebraic $K$-theory of topological $K$-theory
and related $S$-algebras.  Let $\ell_p$ be the $p$-complete Adams
summand of connective complex $K$-theory, and let $\ell/p = k(1)$ be
the first connective Morava $K$-theory.  It has a unique $S$-algebra
structure~\cite{kkmodp-Ang}*{Th.~A}, and we show in Section~\ref{kkmodp-sec:base}
that $\ell/p$ is an $\ell_p$-algebra (in uncountably many ways), so that
$K(\ell/p)$ is a $K(\ell_p)$-module spectrum.

Let $V(1) = S/(p,v_1)$ be the type~$2$ Smith--Toda complex
(see Section~\ref{kkmodp-sec:v1}
below for a definition).  It is
a homotopy commutative ring spectrum for $p\ge5$, with a preferred
periodic class $v_2 \in V(1)_*$ of degree $2p^2-2$.  
We write $V(1)_*(X) = \pi_*(V(1)
\wedge X)$ for the $V(1)$-homotopy of a spectrum~$X$.  Multiplication by
$v_2$ makes $V(1)_*(X)$ a $P(v_2)$-module, where $P(v_2)$ denotes the
polynomial algebra over~$\F_p$ generated by $v_2$.
We denote by $\F_p\{x_1,\dots, x_n\}$
the $\F_p$-vector space generated by $x_1,\dots,x_n$, and
by $E(x_1,\dots, x_n)$ the exterior algebra over $\F_p$ generated 
by $x_1,\dots,x_n$. 

We computed the $V(1)$-homotopy of $K(\ell_p)$
in~\cite{kkmodp-AR02}*{Th.~9.1}, showing
that it is essentially a free $P(v_2)$-module on $(4p+4)$ generators.
The following is our main result, corresponding
to Theorem~\ref{kkmodp-thm:8.10} in the body of the paper.

\begin{theorem}\label{kkmodp-thm:1.2}
Let $p\ge5$ be a prime and let $\ell/p = k(1)$ be the first connective
Morava $K$-theory spectrum.
There is an isomorphism of $P(v_2)$-modules
\begin{equation*}
\begin{aligned}
V(1)_* K(\ell/p) &\cong P(v_2) \otimes E(\bar\epsilon_1) \otimes
	\F_p\{1, \partial\lambda_2, \lambda_2, \partial v_2\} \\
&\qquad \oplus P(v_2) \otimes E(\dlog v_1) \otimes \F_p\{t^d
	v_2 \mid 0<d<p^2-p, p\nmid d\} \\
&\qquad \oplus P(v_2) \otimes E(\bar\epsilon_1) \otimes \F_p\{t^{dp}
	\lambda_2 \mid 0<d<p\} \,.
\end{aligned}
\end{equation*} 
Here $|\lambda_1| = |\bar\epsilon_1| = 2p-1$,
$|\lambda_2| = 2p^2-1$, $|v_2| = 2p^2-2$, $|\dlog v_1| = 1$, $|\partial| =
-1$ and $|t| = -2$.  This is a free $P(v_2)$-module of rank~$(2p^2-2p+8)$
and of zero Euler characteristic.
\end{theorem}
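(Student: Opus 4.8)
The plan is to deduce the computation of $V(1)_*K(\ell/p)$ from the known $V(1)_*K(\ell_p)$ via the localization (dévissage/transfer) sequence relating $K(\ell_p)$, $K(\ell/p)$ and $K$ of the $p$-complete ring of integers in the corresponding number field—more precisely, from a cofiber sequence of the shape
\[
K(\Z_p) \longrightarrow K(\ell_p) \longrightarrow K(\ell/p)
\]
or its $p$-completed analog, obtained by applying the dévissage theorem of Blumberg--Mandell to the localization $\ell_p \to L_p$ inverting $v_1$, together with the identification of the $K$-theory of the Waldhausen category of $v_1$-torsion $\ell_p$-modules. Smashing with $V(1)$ gives a long exact sequence in $V(1)$-homotopy, which is the main engine. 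So the first step is to record this localization sequence (citing the earlier sections of the paper where $\ell/p$ is built as an $\ell_p$-algebra) and to smash it with $V(1)$.

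Next I would input the two known terms. The group $V(1)_*K(\ell_p)$ is quoted from \cite{kkmodp-AR02}*{Th.~9.1} as a nearly free $P(v_2)$-module on $4p+4$ generators, and $V(1)_*K(\Z_p)$ (equivalently $V(1)_*K(\Q_p)$ up to a shift, by another localization sequence) is known from the work of Bökstedt--Madsen and the $V(1)$-computations there; I would cite those and display both answers explicitly, keeping careful track of the $P(v_2)$-module structure and of the internal degrees of all generators, including the classes $\lambda_1,\lambda_2,\partial,t,\dlog v_1$ that appear in the final formula. Then the long exact sequence
\[
\cdots \to V(1)_*K(\Z_p) \to V(1)_*K(\ell_p) \to V(1)_*K(\ell/p) \xrightarrow{\partial} V(1)_{*-1}K(\Z_p) \to \cdots
\]
reduces the theorem to computing the connecting map. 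Since both outer terms are finitely generated free (or nearly free) over $P(v_2)$ in each relevant range and $v_2$ is a nonzerodivisor on all of them, the sequence splits into short exact sequences of $P(v_2)$-modules, and $V(1)_*K(\ell/p)$ is assembled as the cokernel of the transfer map $V(1)_*K(\Z_p)\to V(1)_*K(\ell_p)$ together with an extension by the kernel, shifted by one. I would then read off generators: the summand $P(v_2)\otimes E(\bar\epsilon_1)\otimes\F_p\{1,\partial\lambda_2,\lambda_2,\partial v_2\}$ is the part coming from $K(\ell_p)$ surviving the quotient, while the $t$-power families $\F_p\{t^d v_2\}$ and $\F_p\{t^{dp}\lambda_2\}$ arise from the shifted kernel, i.e.\ from classes in $V(1)_*K(\Z_p)$ hit nontrivially by $\partial$, explaining the appearance of the class $\dlog v_1$ of degree $1$ as the image of a fundamental class under the boundary. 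Finally the rank count: the free part has rank $4\cdot 2 + 2(p^2-2p)\cdot 2 \cdot \tfrac12 + \dots$; more cleanly, one adds $8$ (from the four generators with the exterior $E(\bar\epsilon_1)$), $2(p^2-p-1-\#\{p\mid d\})$ for the middle family, and $2(p-1)$ for the last family, and checks the total equals $2p^2-2p+8$, with zero Euler characteristic because every generator is paired with its $E(\bar\epsilon_1)$- or $E(\dlog v_1)$-multiple in adjacent degrees.

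The main obstacle I expect is the determination of the connecting homomorphism $\partial$, equivalently the image of the transfer map $K(\Z_p)\to K(\ell_p)$ in $V(1)$-homotopy. This is not formal: it requires identifying which classes in $V(1)_*K(\ell_p)$ lie in the image of the transfer, which in turn uses the multiplicative structure ($K(\ell_p)$ is a $K(\Z_p)$-algebra, so the transfer is a map of $K(\Z_p)$-modules by the projection formula), the compatibility with the cyclotomic trace to $TC$, and the comparison of the $v_1$-Bockstein/$\dlog v_1$ classes on both sides as computed via topological Hochschild homology in the earlier sections of the paper. Once $\partial$ is pinned down on a $P(v_2)$-module generating set—most of it forced by degree reasons and by the fact that $K(\ell/p)$ is a $K(\ell_p)$-module—the rest is bookkeeping: splitting off the $t$-adic families into the three displayed summands, verifying freeness over $P(v_2)$ by checking $v_2$ acts injectively (inherited from the outer terms), and computing the rank and Euler characteristic by a direct Poincaré-series comparison.
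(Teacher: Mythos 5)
Your proposal rests on a localization sequence that does not exist, and this is a fatal gap. You write that ``the dévissage theorem of Blumberg--Mandell applied to the localization $\ell_p \to L_p$ inverting $v_1$'' produces a cofiber sequence of the shape $K(\Z_p) \to K(\ell_p) \to K(\ell/p)$. But the third term of the Blumberg--Mandell sequence is $K(L_p)$, not $K(\ell/p)$: inverting $v_1$ yields $L_p$, and dévissage on $v_1$-power-torsion $\ell_p$-modules identifies the \emph{fiber} with $K(\Z_p)$, giving
\[
K(\Z_p) \longrightarrow K(\ell_p) \longrightarrow K(L_p) \longrightarrow \Sigma K(\Z_p)\,.
\]
Killing $v_1$ (the dévissage target) and killing $p$ (which would produce $\ell/p$) are entirely different operations, and you have conflated them. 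There is no known localization sequence relating $K(\ell_p)$ directly to $K(\ell/p)$; the closest candidate would invert $p$ and would put $K(\ell\Q_p)$ (not $K(\Z_p)$) in the third slot, with $K(\ell/p)$ as the fiber, pending a nontrivial dévissage statement for $p$-torsion $\ell_p$-modules that the paper does not establish. Indeed, one of the points the paper makes in its introduction is that such naive localization expectations can fail: the spectra $K(L/p)$, $K(L_p)$ and $K(L\Q_p)$ do \emph{not} form a cofiber sequence.

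The paper's actual proof of this theorem goes through topological cyclic homology rather than through $K(\ell_p)$. Sections~\ref{kkmodp-sec:thh}--\ref{kkmodp-sec:higher} compute $V(1)_*\thh(\ell/p)$ and the $V(1)$-homotopy of its $C_{p^n}$-fixed and homotopy fixed points, Section~\ref{kkmodp-sec:tc} assembles $V(1)_*TC(\ell/p;p)$ via the $R$, $F$ machinery (Theorem~\ref{kkmodp-thm:8.8}), and the passage from $TC$ to $K$ uses the Hesselholt--Madsen cofiber sequence $K(\ell/p)_p \to TC(\ell/p;p)_p \to \Sigma^{-1}H\Z_p$, with the surjectivity of the map to $\Sigma^{-1}E(\bar\epsilon_1)$ checked by naturality along $\pi\colon \ell/p \to H\Z/p$. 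The mod~$p$ localization sequence $K(\Z/p) \to K(\ell/p) \to K(L/p)$ (Proposition~\ref{kkmodp-prop:2.99}) is used in the opposite direction to what you propose: it \emph{consumes} the theorem to produce the Corollary about $K(L/p)$, rather than producing the theorem as input. Even setting aside the incorrect sequence, you correctly identify that pinning down the connecting map would not be formal — but that is precisely where the paper's entire $\thh/TC$ analysis would reappear, so the detour would not save any work.
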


We prove this theorem by means of the cyclotomic trace map \cite{kkmodp-BHM93}
to topological cyclic homology $TC(\ell/p;p)$.  Along the way we evaluate
$V(1)_* \thh(\ell/p)$, where $\thh$ denotes topological Hochschild
homology, as well as $V(1)_* TC(\ell/p;p)$, see Proposition~\ref{kkmodp-prop:5.4}
and Theorem~\ref{kkmodp-thm:8.8}.

Let $L_p$ be the $p$-complete Adams summand of periodic complex
$K$-theory, and let $L/p = K(1)$ be the first periodic Morava
$K$-theory.  The localization cofiber sequence $K(\Z_p) \to
K(\ell_p) \to K(L_p) \to \Sigma K(\Z_p)$
of Blumberg and Mandell~\cite{kkmodp-BM08}*{p.~157} has the mod~$p$ Adams analogue
\begin{equation*}
K(\Z/p) \to K(\ell/p) \to K(L/p) \to \Sigma K(\Z/p) \,,
\end{equation*}
see Proposition~\ref{kkmodp-prop:2.99} below.
Using Quillen's computation \cite{kkmodp-Qu72}*{Th.~7} of $K(\Z/p)$, we obtain the following
consequence:

\begin{corollary}
Let $p\ge5$ be a prime and let $L/p = K(1)$ be the first Morava
$K$-theory spectrum.
There is an isomorphism of $P(v_2^{\pm1})$-modules
\begin{equation*}  
  V(1)_*K(L/p)[v_2^{-1}]
  \cong
  V(1)_*K(\ell/p)[v_2^{-1}]
  \,.
\end{equation*}
If there is a class $\dlog v_1\in V(1)_1 K(L/p)$ with $\lambda_2 = v_2
\cdot \dlog v_1$, then there is an isomorphism of $P(v_2)$-modules
\begin{equation*}
\begin{aligned}
V(1)_*K(L/p)  &\cong P(v_2) \otimes E(\bar\epsilon_1)\otimes 
	\F_p\{1, \partial\lambda_2, \dlog v_1, \partial v_2\} \\
&\qquad \oplus P(v_2) \otimes E(\dlog v_1) \otimes
\F_p\{t^d v_2 \mid 0<d<p^2-p, p\nmid d\} \\
&\qquad \oplus P(v_2) \otimes E(\bar\epsilon_1) \otimes
	\F_p\{t^{dp} v_2 \dlog v_1 \mid 0<d<p\} \,,
\end{aligned}
\end{equation*}
where the degrees of the generators are as in Theorem~\ref{kkmodp-thm:1.2}.
This is a free $P(v_2)$-module of rank~$(2p^2-2p+8)$ and of zero Euler
characteristic.
\end{corollary}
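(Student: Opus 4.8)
The plan is to smash the localization cofiber sequence $K(\Z/p)\to K(\ell/p)\to K(L/p)\to\Sigma K(\Z/p)$ of Proposition~\ref{kkmodp-prop:2.99} with $V(1)$ and to work with the resulting long exact sequence
\[
\cdots\to V(1)_nK(\Z/p)\xrightarrow{\operatorname{tr}}V(1)_nK(\ell/p)\xrightarrow{g}V(1)_nK(L/p)\xrightarrow{\partial}V(1)_{n-1}K(\Z/p)\to\cdots
\]
of $P(v_2)$-modules, in which $g$ is multiplicative while $\operatorname{tr}$ and $\partial$ are $V(1)_*K(\ell/p)$-linear, since the cofiber sequence is one of $K(\ell/p)$-module spectra. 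First I would evaluate the outer term. By Quillen's computation~\cite{kkmodp-Qu72}*{Th.~7}, $\pi_*K(\Z/p)$ is $\Z$ in degree~$0$ and finite of order prime to~$p$ in positive odd degrees, so $K(\Z/p)^{\wedge}_p\simeq H\Z_p$; as $V(1)$ has $p$-torsion homology, $V(1)_*K(\Z/p)\cong\pi_*(V(1)\wedge H\Z_p)$, which the cell structure of $V(1)$ (cells in dimensions $0,1,2p-1,2p$, linked in pairs by Bocksteins) makes $\F_p$ in degrees~$0$ and $2p-1$ and zero in all other degrees. In particular $g$ is an isomorphism outside degrees $0,1,2p-1,2p$, it is injective in every degree where $V(1)_*K(\Z/p)$ vanishes, and $V(1)_1K(\ell/p)=0$ by Theorem~\ref{kkmodp-thm:1.2}.

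The first isomorphism is then immediate: $V(1)_*K(\Z/p)$ is bounded and $v_2$ acts on it with positive degree, so $V(1)_*K(\Z/p)[v_2^{-1}]=0$, and localizing the long exact sequence at $v_2$ shows that $g[v_2^{-1}]\colon V(1)_*K(\ell/p)[v_2^{-1}]\to V(1)_*K(L/p)[v_2^{-1}]$ is an isomorphism.

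For the refined statement the crux is that $\operatorname{tr}$ vanishes on $V(1)_*$, i.e.\ in degrees~$0$ and $2p-1$. In degree~$0$ this holds because, under the d\'evissage identification of $K(\Z/p)$ with the $K$-theory of $v_1$-torsion perfect $\ell/p$-modules, $\operatorname{tr}$ carries $[\F_p]=[\pi_0\ell/p]$ to $[\ell/p/v_1]=[\ell/p]-[\Sigma^{2p-2}\ell/p]=0$ in $K_0(\ell/p)=\Z$. In degree~$2p-1$ I would invoke the hypothesis. Since $g$ is injective in degrees $2p^2-1$ and $2p^2+2p-2$, the classes $\lambda_2\in V(1)_{2p^2-1}K(L/p)$ and $\bar\epsilon_1\lambda_2\in V(1)_{2p^2+2p-2}K(L/p)$ are nonzero, and therefore so are $\dlog v_1\in V(1)_1K(L/p)$ and $\bar\epsilon_1\dlog v_1\in V(1)_{2p}K(L/p)$, because $v_2\cdot\dlog v_1=\lambda_2$, whence also $v_2\cdot\bar\epsilon_1\dlog v_1=\bar\epsilon_1\lambda_2$. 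Now $\bar\epsilon_1\dlog v_1$ cannot lie in the image of~$g$: otherwise $\bar\epsilon_1\lambda_2=v_2\cdot\bar\epsilon_1\dlog v_1$ would lie in $v_2\cdot V(1)_*K(\ell/p)$ (using that $g$ is injective in degree $2p^2+2p-2$), contradicting that $\bar\epsilon_1\lambda_2$ belongs to a $P(v_2)$-basis of the free module $V(1)_*K(\ell/p)$ of Theorem~\ref{kkmodp-thm:1.2}. Hence $\partial$ is nonzero on $V(1)_{2p}K(L/p)$, and exactness forces $\operatorname{tr}=0$ on $V(1)_{2p-1}K(\Z/p)\cong\F_p$.

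With $\operatorname{tr}=0$ the long exact sequence breaks into a short exact sequence of $P(v_2)$-modules
\[
0\to V(1)_*K(\ell/p)\xrightarrow{g}V(1)_*K(L/p)\xrightarrow{\partial}\Sigma V(1)_*K(\Z/p)\to0\,,
\]
whose cokernel is $\F_p$ in degrees~$1$ and $2p$ with $v_2$ acting trivially. Thus $V(1)_*K(L/p)$ is an extension of this small $v_2$-torsion module by the free module $V(1)_*K(\ell/p)$, which vanishes in degree~$1$, so $\partial\colon V(1)_1K(L/p)\xrightarrow{\cong}\F_p$ and $\dlog v_1$ maps to a generator of the cokernel, and likewise $\bar\epsilon_1\dlog v_1$ maps to a generator in degree~$2p$. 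The hypothesis $v_2\cdot\dlog v_1=\lambda_2$, together with its consequence $v_2\cdot\bar\epsilon_1\dlog v_1=\bar\epsilon_1\lambda_2$, then pins the extension down: $V(1)_*K(L/p)$ is generated over $P(v_2)$ by the image of a $P(v_2)$-basis of $V(1)_*K(\ell/p)$ together with $\dlog v_1$ and $\bar\epsilon_1\dlog v_1$, in which $\lambda_2$ and $\bar\epsilon_1\lambda_2$ are now redundant; replacing them by $\dlog v_1$ and $\bar\epsilon_1\dlog v_1$ yields a generating set of exactly the shape displayed in the statement. A comparison of Poincar\'e series — the displayed module and the extension above have the same graded dimension in every degree — shows it is a $P(v_2)$-basis, and the rank $2p^2-2p+8$ and the vanishing Euler characteristic are inherited from Theorem~\ref{kkmodp-thm:1.2}, since only two generators have been moved, by the even amount $|v_2|$, into lower degrees. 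The genuine obstacle is the extension problem $v_2\cdot\dlog v_1=\lambda_2$ itself: the long exact sequence and Theorem~\ref{kkmodp-thm:1.2} determine $V(1)_*K(L/p)$ only up to this datum, and the split alternative would make it non-free — which is why the statement must be conditional.
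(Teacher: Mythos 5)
Your proof is correct and it is the argument the paper leaves implicit (the corollary is stated in the introduction with no proof beyond the pointers to Proposition~\ref{kkmodp-prop:2.99} and Quillen's theorem), so there is nothing to compare it against except the obvious intended route, which you follow.

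Two points worth flagging. First, your argument that the transfer vanishes on $V(1)_{2p-1}K(\Z/p)$ (and that the degree-$2p$ extension is non-split) leans on treating $\bar\epsilon_1\cdot\dlog v_1$ as a genuine product with $v_2\cdot(\bar\epsilon_1\dlog v_1)=\bar\epsilon_1\lambda_2$. This is a module-multiplication over something larger than $P(v_2)$: $\ell/p$ is only an associative $\ell_p$-algebra, so $K(\ell/p)$ and $K(L/p)$ are $K(\ell_p)$-module spectra rather than rings. The paper does implicitly use such products (e.g.\ $\widetilde A=E(\bar\epsilon_1,\lambda_2)\otimes P(v_2)$ is called a ``subalgebra'' in Proposition~\ref{kkmodp-prop:8.4}), and the corollary's own formula presupposes that $\bar\epsilon_1\dlog v_1$ is meaningful, so you are working within the paper's conventions; but a fully airtight write-up would spell out which module structure supplies these products and why the $V(1)_*$-linear $v_2$-action commutes with them. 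Second, note that the simpler-looking alternative of killing $\operatorname{tr}(\bar\epsilon_1)$ via the primary $v_1$-Bockstein $\beta_{1,1}$ is not as immediate as one might hope, since $V(1)_{2p-1}K(\ell/p)$ is two-dimensional (it contains $t^{(p-1)p}\lambda_2$ as well as $\bar\epsilon_1$), so the $\beta_{1,1}$-computation needs to control both summands; your route via the degree-$2p$ class and exactness sidesteps this. The $K_0$ computation of $\operatorname{tr}$ in degree $0$, the $v_2$-localization argument for the first isomorphism, the Poincar\'e-series comparison, and the rank and Euler-characteristic bookkeeping are all correct.
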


Our far-reaching aim, which partially motivated the computations
presented here, is to conceptually understand the algebraic
$K$-theory of $\ell_p$ and other commutative $S$-algebras in terms of
localization and Galois descent, in the same way as we understand the
algebraic $K$-theory of rings of integers in (local) number fields or
more general regular rings.  The first task is to relate $K(\ell_p)$ to
the algebraic $K$-theory of its ``residue fields'' and ``fraction field'',
for which we expect a description in terms of Galois cohomology to exist,
starting with the Galois theory for commutative $S$-algebras developed by
the second author~\cite{kkmodp-Rog08}.  The residue rings of $\ell_p$ appear to
be $\ell/p$, $H\Z_p$ and~$H\Z/p$, while the fraction field
$\ff(\ell_p)$ is more mysterious.
For our purposes, its algebraic $K$-theory $K(\ff(\ell_p))$
should fit in a natural localization cofibre sequence of
spectra
\begin{equation*}
K(L/p) \to K(L_p) \to K(\ff(\ell_p)) \to \Sigma K(L/p)  \,.
\end{equation*}
An obvious candidate for $\ff(\ell_p)$ is provided by the algebraic
localization $L_p[p^{-1}] = L\Q_p$, having as coefficients the
graded field $\Q_p[v_1^{\pm1}]$. However, by the
following corollary, this is too naive.

\begin{corollary}
The spectra $ K(L/p)$, $K(L_p)$ and $K(L\Q_p)$ cannot
possibly fit in a cofibre sequence 
\begin{equation*}
 K(L/p) \to K(L_p) \to K(L\Q_p) \to \Sigma K(L/p)\,.
\end{equation*}
\end{corollary}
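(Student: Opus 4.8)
The plan is to apply $V(1)$-homotopy and then invert the periodicity class $v_2$ to the hypothetical cofibre sequence, and to read off a contradiction from the ranks of the resulting modules. So suppose, for a contradiction, that
\begin{equation*}
K(L/p) \to K(L_p) \to K(L\Q_p) \to \Sigma K(L/p)
\end{equation*}
is a cofibre sequence of spectra. Smashing with $V(1)$ and passing to homotopy yields a long exact sequence of $P(v_2)$-modules (the class $v_2$ is a self-map of $V(1)$, so multiplication by it commutes with all the maps), and inverting $v_2$ gives a long exact sequence of modules over the graded field $P(v_2^{\pm1})$. Two of its three terms have known ranks. By Theorem~\ref{kkmodp-thm:1.2} the $P(v_2)$-module $V(1)_* K(\ell/p)$ is free of rank $2p^2-2p+8$, so by the unconditional first isomorphism in the preceding corollary $V(1)_* K(L/p)[v_2^{-1}]$ is a free $P(v_2^{\pm1})$-module of rank $2p^2-2p+8$. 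By \cite{kkmodp-AR02} together with the Blumberg--Mandell localization sequence $K(\Z_p)\to K(\ell_p)\to K(L_p)$ of \cite{kkmodp-BM08} (and the fact that $V(1)_* K(\Z_p)$ is finite, hence $v_2$-torsion), $V(1)_* K(L_p)[v_2^{-1}]$ is a free $P(v_2^{\pm1})$-module of rank $4p+4$.

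The decisive input is that the remaining term vanishes: $V(1)_* K(L\Q_p)[v_2^{-1}] = 0$, equivalently that $V(1)_* K(L\Q_p)$ is finite. Here I would exploit that $L\Q_p = L_p[p^{-1}]$ is a rational $E_\infty$ ring spectrum with $\pi_* L\Q_p = \Q_p[v_1^{\pm1}]$, a graded field of characteristic zero. Being rational with homotopy a Laurent polynomial algebra on an even-degree generator, $L\Q_p$ is formal: it is equivalent, as a ring spectrum, to the Eilenberg--MacLane spectrum $H(\Q_p[v_1^{\pm1}])$. Hence the category of perfect $L\Q_p$-modules is that of perfect complexes of graded $\Q_p[v_1^{\pm1}]$-modules, which is semisimple with the finitely many simple objects $\Sigma^j \Q_p[v_1^{\pm1}]$, $j$ taken modulo $|v_1| = 2p-2$, each having endomorphism ring $\Q_p$; therefore $K(L\Q_p)$ is a finite wedge of copies of $K(\Q_p)$ (indexed by $\Z/(2p-2)$). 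Finally $V(1)_* K(\Q_p)$ is finite, because by the Quillen--Lichtenbaum conjecture for the local field $\Q_p$ (now a theorem) the spectrum $K(\Q_p)/p$ is $v_1$-periodic in high degrees, so $V(1)\wedge K(\Q_p)$ has finite total homotopy. Establishing this finiteness --- which records that, despite $L\Q_p$ formally resembling a Morava $K$-theory, $K(L\Q_p)$ is ``chromatically small'', in stark contrast to $K(L/p) = K(K(1))$ --- is the step I expect to be the main obstacle: it requires pinning down the ring spectrum $L\Q_p$ up to the formality equivalence, and invoking the (now proven) Quillen--Lichtenbaum conjecture.

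Granting this, the long exact sequence from the first paragraph has all terms arising from $K(L\Q_p)$ equal to zero, so it collapses to an isomorphism $V(1)_* K(L/p)[v_2^{-1}] \cong V(1)_* K(L_p)[v_2^{-1}]$ of $P(v_2^{\pm1})$-modules. Comparing ranks forces $2p^2-2p+8 = 4p+4$, i.e.\ $2(p-1)(p-2) = 0$, which is impossible for $p \ge 5$; hence no such cofibre sequence exists. (One need not even use the precise rank of $V(1)_* K(L_p)[v_2^{-1}]$: in a long exact sequence $A \to B \to C \to \Sigma A \to \cdots$ of finite-rank $P(v_2^{\pm1})$-modules one has $\operatorname{rank} A - \operatorname{rank} B + \operatorname{rank} C = 2\operatorname{rank}\ker(A\to B)$ and $\operatorname{rank}\ker(A\to B) \ge \operatorname{rank} A - \operatorname{rank} B$, whence $\operatorname{rank} C \ge \operatorname{rank} A - \operatorname{rank} B$; the hypothetical sequence would thus require $\operatorname{rank} V(1)_* K(L\Q_p)[v_2^{-1}] \ge (2p^2-2p+8) - (4p+4) = 2(p-1)(p-2) > 0$, contrary to its vanishing.)
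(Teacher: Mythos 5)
Your overall strategy is the same as the paper's: pass to $V(1)$-homotopy, invert $v_2$, and read off a contradiction from the ranks in the resulting long exact sequence of $P(v_2^{\pm1})$-modules, given that $V(1)_*K(L\Q_p)[v_2^{-1}]=0$. Where you diverge is exactly in the step you yourself flag as the ``main obstacle,'' namely establishing that last vanishing. You go to considerable lengths: formality of the rational ring spectrum $L\Q_p$, identification of its perfect module category with perfect complexes of graded $\Q_p[v_1^{\pm1}]$-modules, semisimplicity and dévissage to split $K(L\Q_p)$ into a finite wedge of copies of $K(\Q_p)$, and then Quillen--Lichtenbaum for $\Q_p$. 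This is correct in outline, but it proves much more than is needed and requires one to pin down a ring-spectrum equivalence and a Schwede--Shipley-type identification of module categories.

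The paper's argument for the vanishing is one line and entirely bypasses all of this: the unit map $H\Q_p \to L\Q_p$ of $S$-algebras induces a ring map $K(\Q_p) \to K(L\Q_p)$, so $V(1)_*K(L\Q_p)[v_2^{-1}]$ is an algebra over $V(1)_*K(\Q_p)[v_2^{-1}]$, and the latter vanishes because Hesselholt--Madsen's computation of $K(\Q_p)_p$ shows $V(1)_*K(\Q_p)$ is $v_2$-torsion. An algebra over the zero ring is zero, since the unit maps to zero. No structural analysis of $L\Q_p$-modules, no formality, no dévissage. Your route does buy something --- an explicit identification $K(L\Q_p)\simeq\bigvee_{\Z/(2p-2)}K(\Q_p)$ --- but for the corollary at hand the ``algebra over zero'' observation is the efficient key lemma you missed.
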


Indeed, the above computation implies that
$V(1)_*K(L/p)[v_2^{-1}]$ and $V(1)_*K(L_p)[v_2^{-1}]$ are
not abstractly isomorphic, while
$V(1)_*K(L\Q_p)[v_2^{-1}]$ is zero since it is an algebra
over $V(1)_*K(\Q_p)[v_2^{-1}]=0$. The later equality follows
from the computation of the $p$-primary homotopy type of
$K(\Q_p)$~\cite{kkmodp-HM03}*{Th.~D}, which shows that
$V(1)_*K(\Q_p)$ is $v_2$-torsion. 

In conclusion, the conjectural fraction field $\ff(\ell_p)$
appears to be a localization of $L_p$ away from $L/p$ less drastic
than the algebraic localization $L_p[p^{-1}] = L\Q_p$. 
We elaborate more on this issue in~\cite{kkmodp-ARff}.

\medskip

The paper is organized as follows.  In Section~\ref{kkmodp-sec:base} we fix our
notations, show that $\ell/p$ admits the structure of an associative
$\ell_p$-algebra, and give a similar discussion for $ku/p$ and the periodic
versions $L/p$ and $KU/p$. 
Section~\ref{kkmodp-sec:thh} contains the computation of the mod~$p$
homology of $\thh(\ell/p)$, and in Section~\ref{kkmodp-sec:v1} we evaluate
its $V(1)$-homotopy.  In Section~\ref{kkmodp-sec:tate} we show that the
$C_{p^n}$-fixed points and $C_{p^n}$-homotopy fixed points of
$\thh(\ell/p)$ are closely related, and use this to inductively
determine their $V(1)$-homotopy in Section~\ref{kkmodp-sec:higher}.  Finally,
in Section~\ref{kkmodp-sec:tc} we achieve the computation of
$TC(\ell/p;p)$
and $K(\ell/p)$ in $V(1)$-homotopy.

\begin{notations}
Let $p$ be a fixed prime.
We write $E(x) = \F_p[x]/(x^2)$ for the exterior algebra, $P(x) = \F_p[x]$
for the polynomial algebra and $P(x^{\pm1}) = \F_p[x, x^{-1}]$ for the
Laurent polynomial algebra on one generator $x$, and similarly for a
list of generators.  We will also write $\Gamma(x) = \F_p\{ \gamma_i(x)
\mid i \ge 0\}$ for the divided power algebra, with $\gamma_i(x) \cdot
\gamma_j(x) = (i,j) \gamma_{i+j}(x)$, where $(i,j) = (i+j)!/i!j!$ is the
binomial coefficient.  We use the obvious abbreviations $\gamma_0(x) =
1$ and $\gamma_1(x) = x$.  Finally, we write $P_h(x) = \F_p[x]/(x^h)$ for
the truncated polynomial algebra of height $h$, and recall the isomorphism
$\Gamma(x) \cong P_p(\gamma_{p^e}(x) \mid e\ge0)$ in characteristic~$p$.
We write $X_{(p)}$ and~$X_p$ for the $p$-localization 
and the $p$-completion, respectively, of any spectrum or abelian group~$X$.  
In the spectral sequences (of $\F_p$-modules) discussed
below, we often determine differentials
only up to multiplication by a unit. We use the notation
$d(x)\doteq y$ to indicate that the equation $d(x)=\alpha
y$ holds for some unit $\alpha\in\F_p$. 
\end{notations}

\section{Base change squares of $S$-algebras }
\label{kkmodp-sec:base}

Let $p$ be a prime, even or odd for now.  
Let $ku$ and $KU$
be the connective and the periodic complex $K$-theory spectra, with
homotopy rings $ku_* = \Z[u]$ and $KU_* = \Z[u^{\pm1}]$, where
$|u|=2$.  Let $\ell = BP\langle1\rangle$ and $L = E(1)$ be the
$p$-local Adams summands, with $\ell_* = \Z_{(p)}[v_1]$ and $L_* =
\Z_{(p)}[v_1^{\pm1}]$, where $|v_1| = 2p-2$.  The inclusion $\ell \to
ku_{(p)}$ maps $v_1$ to $u^{p-1}$.  Alternate notations in the
$p$-complete cases are $KU_p = E_1$ and $L_p = \widehat{E(1)}$.  These
ring spectra are all commutative $S$-algebras, in the sense that each
admits a unique $E_\infty$ ring spectrum structure.  See
\cite{kkmodp-BaR05}*{p.~692}
for proofs of uniqueness in the periodic cases.

Let $ku/p$ and $KU/p$ be the connective and periodic mod~$p$ complex
$K$-theory spectra, with coefficients $(ku/p)_* = \Z/p[u]$ and $(KU/p)_*
= \Z/p[u^{\pm1}]$.  These are $2$-periodic versions of the first Morava
$K$-theory spectra $\ell/p = k(1)$ and $L/p = K(1)$, with $(\ell/p)_*
= \Z/p[v_1]$ and $(L/p)_* = \Z/p[v_1^{\pm1}]$.  Each of these can be
constructed as the cofiber of the multiplication by $p$ map, as a module
over the corresponding commutative $S$-algebra.  For example, there is
a cofiber sequence of $ku$-modules $ku \xr{p} ku \xr{i} ku/p
\to \Sigma ku$.

Let $HR$ be the Eilenberg--Mac\,Lane spectrum of a ring $R$.  When $R$
is associative, $HR$ admits a unique associative $S$-algebra structure,
and when $R$ is commutative, $HR$ admits a unique commutative
$S$-algebra structure.  The zeroth Postnikov section defines unique
maps of commutative $S$-algebras $\pi \: ku \to H\Z$ and $\pi \: \ell
\to H\Z_{(p)}$, which can be followed by unique commutative $S$-algebra
maps to $H\Z/p$.

The $ku$-module spectrum $ku/p$ does not admit the structure of a
commutative $ku$-algebra.  It cannot even be an $E_2$ or $H_2$ ring
spectrum, since the homomorphism induced in mod~$p$ homology by the
resulting map $\pi \: ku/p \to H\Z/p$ of $H_2$ ring spectra would not
commute with the homology operation $Q^1(\bar\tau_0) = \bar\tau_1$ in
the target $H_*(H\Z/p; \F_p)$ \cite{kkmodp-BMMS86}*{III.2.3}.  Similar remarks
apply for $KU/p$, $\ell/p$ and $L/p$.  Associative algebra structures,
or $A_\infty$ ring spectrum structures, are easier to come by.  The
following result is a direct application of the methods of
\cite{kkmodp-La01}*{\S\S9--11}.  We adapt the notation of
\cite{kkmodp-BJ02}*{\S3} to provide some
details in our case.

\begin{proposition}
The $ku$-module spectrum $ku/p$ admits the structure of an associative
$ku$-algebra, but the structure is not unique.  Similar statements hold
for $KU/p$ as a $KU$-algebra, $\ell/p$ as an $\ell$-algebra and $L/p$
as an $L$-algebra.
\end{proposition}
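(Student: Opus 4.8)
The plan is to construct an associative $ku$-algebra structure on $ku/p$ by obstruction theory, following the general framework of Lazarev \cite{kkmodp-La01}*{\S\S9--11} for realizing quotients $R/x$ of a commutative $S$-algebra $R$ by a regular element as $A_\infty$ $R$-algebras, and to use the cohomology computation to see both that the obstructions vanish and that the resulting structure is not unique. First I would recall that $ku/p$ is the cofiber of $p \: ku \to ku$ in $ku$-modules, so $(ku/p)_* = \Z/p[u]$ and there is a canonical unit map $i \: ku \to ku/p$. The strategy is to build the multiplication $\mu \: ku/p \wedge_{ku} ku/p \to ku/p$ and the higher coherence homotopies $m_n$ inductively, where at each stage the obstruction to extending an $A_{n-1}$-structure to an $A_n$-structure lies in a topological André--Quillen or Hochschild-type cohomology group, concretely a group of the form $\Ext^{*}_{(ku/p)_*(ku/p)}\bigl((ku/p)_*, (ku/p)_*\bigr)$ in an appropriate internal degree; compare the explicit bookkeeping in \cite{kkmodp-BJ02}*{\S3}. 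Since $p$ is a non-zero-divisor in $ku_* = \Z[u]$, the $ku$-module smash powers of $ku/p$ have computable homotopy: $(ku/p)_*(ku/p) = (ku/p)_* \otimes_{ku_*} (ku/p)_* \oplus \operatorname{Tor}^{ku_*}_1((ku/p)_*,(ku/p)_*)$, and the Tor term contributes an exterior generator in degree $1$, so the relevant Ext-algebra is $E(\alpha) \otimes \Z/p[u]$-type and is concentrated in the degrees where one checks the obstruction groups vanish.

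The key steps, in order, are: (1) identify $ku/p$ with the $ku$-module cofiber of multiplication by $p$, and record $(ku/p)_*$ and $(ku/p)_*(ku/p)$ together with the bimodule structure; (2) set up the inductive obstruction theory for $A_n$-structures as in \cite{kkmodp-La01}, observing that the $n$-th obstruction group is a summand of $H^{*}$ of a bar-type complex computing maps $(ku/p)^{\wedge_{ku} n} \to ku/p$ of the appropriate degree; (3) compute these groups using the description of $(ku/p)_*(ku/p)$, and check that the obstruction classes lie in zero groups, so the induction never stops and one obtains an associative (indeed $A_\infty$) $ku$-algebra structure; (4) analyze the indeterminacy: the choices at each stage form a torsor over a group $H^{*-1}$ which, by the same Ext computation, is non-zero — in particular the degree-$1$ exterior class $\alpha$ produces a family of inequivalent multiplications — so the structure is genuinely not unique. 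Finally, step (5) transports everything along the standard maps: $KU/p = KU \wedge_{ku} ku/p$, $\ell/p = \ell \wedge_{ku_{(p)}} \cdots$ (or directly $\ell/p$ as the $\ell$-module cofiber of $p$), and $L/p = L \wedge_\ell \ell/p$, so that an associative algebra structure on the connective $p$-local case base-changes to each of the stated cases (using that $p$ remains a non-zero-divisor in $KU_*$, $\ell_*$ and $L_*$); alternatively one runs the identical obstruction argument verbatim over each of $KU$, $\ell$, $L$ in place of $ku$.

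The main obstacle I expect is purely computational rather than conceptual: one must correctly identify the sequence of obstruction groups — which cohomology theory (topological André--Quillen cohomology of $ku/p$ over $ku$ with coefficients in $ku/p$, or equivalently a $\operatorname{MU}$- or $ku$-Hochschild cohomology) governs the $A_\infty$ obstructions — and then carry out the $\operatorname{Ext}$ calculation over $(ku/p)_*(ku/p)$ with the correct (non-symmetric, since we only want associativity) bimodule structure, keeping precise track of the internal grading so that the obstruction classes land in degrees where the groups vanish and the indeterminacy classes land in a degree where the group is non-trivial. This is exactly the content of \cite{kkmodp-La01}*{\S\S9--11}, specialized as in \cite{kkmodp-BJ02}*{\S3}, so the argument is a direct application of those methods; the only care needed is to verify that the hypotheses there (in particular that $p$ is a non-zero-divisor in the relevant coefficient ring, which holds for $ku_* = \Z[u]$, $KU_* = \Z[u^{\pm1}]$, $\ell_* = \Z_{(p)}[v_1]$ and $L_* = \Z_{(p)}[v_1^{\pm1}]$) are met, and to extract from the non-vanishing of the degree-$1$ class the explicit conclusion that the multiplication is not unique.
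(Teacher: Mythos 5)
Your proposal takes a genuinely different route from the paper, even though both cite the same sources (Lazarev \cite{kkmodp-La01}*{\S\S9--11} and Baker--Jeanneret \cite{kkmodp-BJ02}*{\S3}).  The paper does \emph{not} run the $A_n$-obstruction theory you sketch.  Instead, it builds $ku/p$ through its Postnikov tower of associative $ku$-algebras $P^{2m-2} = ku/(p,u^m)$, realizing each stage as a homotopy pullback along a derivation $d \in \ADer_{ku}^{2m+1}(P, H\Z/p) \cong \thh_{ku}^{2m+2}(P, H\Z/p)$ whose composition with the wedge projection is forced to equal the $ku$-module $k$-invariant of $ku/p$.  The relevant group is computed via a collapsing Ext spectral sequence to be $\F_p\{y_{1,m}, y_0^{m+1}\}$, giving exactly $p$ compatible derivations $d = y_{1,m} + \alpha y_0^{m+1}$ at each stage.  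Existence and non-uniqueness are then read off simultaneously, with no separate vanishing statement needed.  Your $A_n$-coherence approach, by contrast, requires you to verify that the obstruction classes to extending $A_{n-1}$- to $A_n$-structures vanish for all $n \ge 3$; as you yourself observe, the relevant $\Ext$-algebra over $(ku/p)_*(ku/p) \cong E(a)\otimes\Z/p[u]$ is not zero, so this is a genuine bidegree bookkeeping argument that your sketch does not carry out.  In the paper's version this issue simply does not arise, because the relevant extension groups at each Postnikov stage are computed explicitly (and are concentrated in even total degrees, so the spectral sequence collapses).

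There is a concrete gap in your step (5).  You propose to base-change the $ku/p$-structure along $ku \to KU$, $ku \to \ell$, etc.  The passage $ku/p \rightsquigarrow KU/p$ by $KU$-Bousfield localization in $ku$-modules is fine and is what the paper does.  But $\ell$ is a \emph{summand} of $ku_{(p)}$, not a $ku$-algebra receiving a map from $ku_{(p)}$ in the relevant direction, so there is no obvious base change producing an $\ell$-algebra structure on $\ell/p$ from a $ku$-algebra structure on $ku/p$.  The paper sidesteps this entirely: it says to run the whole Postnikov/derivation argument again over $\ell$ (and over $L$), replacing $u$ by $v_1$.  You should either do the same for your $A_\infty$-obstruction argument (which works, since $p$ is a non-zero-divisor in $\ell_*$ and $L_*$ and the $\operatorname{Tor}$-algebra has the same shape), or supply a specific functor realizing $\ell/p$ from $ku/p$ — but the one you gesture at does not exist.

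Finally, the paper's argument actually yields the stronger statement that there are \emph{uncountably many} structures (one for each sequence $(\alpha_m)_{m\ge1}$ in $\F_p$); your argument, as written, only extracts non-uniqueness from the non-vanishing of a single degree-one indeterminacy class, which matches the proposition as stated but loses this extra information.
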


\begin{proof}
We construct $ku/p$ as the (homotopy) limit of its Postnikov tower of
associative $ku$-algebras $P^{2m-2} = ku/(p, u^m)$, with coefficient rings
$ku/(p, u^m)_* = ku_*/(p, u^m)$ for $m\ge1$.  To start the induction,
$P^0 = H\Z/p$ is a $ku$-algebra via $i \circ \pi \: ku \to H\Z \to H\Z/p$.
Assume inductively for $m\ge1$ that $P = P^{2m-2}$ has been constructed.
We will define $P^{2m}$ by a (homotopy) pullback diagram
\begin{equation*}
\xymatrix{
P^{2m} \ar[r] \ar[d] & P \ar[d]^{in_1} \\
P \ar[r]^-{d} & P \vee \Sigma^{2m+1} H\Z/p
}
\end{equation*}
in the category of associative $ku$-algebras.  Here
\begin{equation*}
d \in \ADer_{ku}^{2m+1}(P, H\Z/p)
\cong \thh_{ku}^{2m+2}(P, H\Z/p)
\end{equation*}
is an associative $ku$-algebra derivation of $P$ with values in
$\Sigma^{2m+1} H\Z/p$, and the group of such can be identified with the
indicated topological Hochschild cohomology group of $P$ over
$ku$.  We recall that these are the homotopy groups (cohomologically
graded) of the function spectrum $F_{P \wedge_{ku} P^{op}}(P, H\Z/p)$.
The composite map $pr_2 \circ d \: P \to \Sigma^{2m+1}
H\Z/p$ of $ku$-modules, where $pr_2$ projects onto the second wedge
summand, is restricted to equal the $ku$-module Postnikov $k$-invariant
of $ku/p$ in
\begin{equation*}
H^{2m+1}_{ku}(P; \Z/p) = \pi_0 F_{ku}(P, \Sigma^{2m+1} H\Z/p) \,.
\end{equation*}
We compute that $\pi_*(P \wedge_{ku} P^{op}) = ku_*/(p, u^m) \otimes
E(\tau_0, \tau_{1,m})$, where $|\tau_0| = 1$, $|\tau_{1,m}| = 2m+1$ and
$E(-)$ denotes the exterior algebra on the given generators.  (For $p=2$,
the use of the opposite product is essential here
\cite{kkmodp-An08}*{\S3}.)
The function spectrum description of topological Hochschild cohomology
leads to the spectral sequence
\begin{equation*}
\begin{aligned}
E_2^{*,*} &= \Ext^{*,*}_{\pi_*(P \wedge_{ku} P^{op})}(\pi_*(P), \Z/p) \\
&\cong \Z/p[y_0, y_{1,m}] \\
&\Longrightarrow \thh^*_{ku}(P, H\Z/p) \,,
\end{aligned}
\end{equation*}
where $y_0$ and $y_{1,m}$ have cohomological bidegrees~$(1,1)$
and~$(1,2m+1)$, respectively.  The spectral sequence collapses at $E_2 =
E_\infty$, since it is concentrated in even total degrees.  In particular,
\begin{equation*}
\ADer^{2m+1}_{ku}(P, H\Z/p) \cong \F_p\{y_{1,m}, y_0^{m+1}\} \,.
\end{equation*}
Additively, $H^{2m+1}_{ku}(P; \Z/p) \cong \F_p\{Q_{1,m}\}$ is generated
by a class dual to $\tau_{1,m}$, which is the image of $y_{1,m}$ under
left composition with $pr_2$.  It equals the $ku$-module $k$-invariant
of $ku/p$.  Thus there are precisely $p$ choices $d = y_{1,m} +
\alpha y_0^{m+1}$, with $\alpha \in \F_p$, for how to extend any given
associative $ku$-algebra structure on $P = P^{2m-2}$ to one on $P^{2m}
= ku/(p, u^{m+1})$.  In the limit, we find that there are an uncountable
number of associative $ku$-algebra structures on $ku/p = \holim_m P^{2m}$,
each indexed by a sequence of choices $\alpha \in \F_p$ for all $m\ge1$.

The periodic spectrum $KU/p$ can be obtained from $ku/p$ by Bousfield
$KU$-lo\-ca\-li\-za\-tion in the category of $ku$-modules
\cite{kkmodp-EKMM97}*{VIII.4}, which makes it an associative $KU$-algebra.  The
classification of periodic $S$-algebra structures is the same as in the connective
case, since the original $ku$-algebra structure on $ku/p$ can be
recovered from that on $KU/p$ by a functorial passage to the connective
cover.  To construct $\ell/p$ as an associative $\ell$-algebra, or
$L/p$ as an associative $L$-algebra, replace $u$ by $v_1$ in these
arguments.
\end{proof}

By varying the ground $S$-algebra, we obtain the same conclusions about
$ku/p$ as a $ku_{(p)}$-algebra or $ku_p$-algebra, and about $\ell/p$ as
an $\ell_p$-algebra.  

For each choice of $ku$-algebra structure on $ku/p$, the zeroth
Postnikov section 
\begin{equation*}
\pi \: ku/p \to H\Z/p
\end{equation*}
is a $ku$-algebra map, with
the unique $ku$-algebra structure on the target.  Hence there is
a commutative square of associative $ku$-algebras
\begin{equation*}
\xymatrix{
ku \ar[r]^i \ar[d]^{\pi} & ku/p \ar[d]^{\pi} \\
H\Z \ar[r]^i & H\Z/p
}
\end{equation*}
and similarly in the $p$-local and $p$-complete cases.  In view of the
weak equivalence $H\Z \wedge_{ku} ku/p \simeq H\Z/p$, this square
expresses the associative $H\Z$-algebra $H\Z/p$ as the base change of
the associative $ku$-algebra $ku/p$ along $\pi \: ku \to H\Z$.
Likewise, there is a commutative square of associative $\ell_p$-algebras
\begin{equation}
\xymatrix{
\ell_p \ar[r]^i \ar[d]^{\pi} & \ell/p \ar[d]^{\pi} \\
H\Z_p \ar[r]^i & H\Z/p
}
\label{kkmodp-eq:2.4}
\end{equation}
that expresses $H\Z/p$ as the base change of $\ell/p$ along $\ell_p \to
H\Z_p$, and similarly in the $p$-local case.  By omission of
structure, these squares are also diagrams of $S$-algebras and
$S$-algebra maps.

We end this section by formulating the mod $p$ analogue of the 
localization cofibre sequence in algebraic $K$-theory 
\begin{equation}
K(\Z_p) \to K(\ell_p) \to K(L_p) \to \Sigma K(\Z_p)
\label{kkmodp-eq:2.98}
\end{equation}
conjectured by the second author and established by 
Blumberg and Mandell~\cite{kkmodp-BM08}*{p.~157}

\begin{proposition}\label{kkmodp-prop:2.99}
There is a localization cofibre sequence of spectra
\begin{equation*}
K(\Z/p) \to K(\ell/p) \to K(L/p) \to \Sigma K(\Z/p)
\end{equation*}
where the first map is the transfer and the second map is
induced by the localization $\ell/p\to\ell/p[v_1^{-1}]=L/p$.
\end{proposition}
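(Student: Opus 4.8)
The plan is to re-run the proof of Blumberg--Mandell's localization theorem~\cite{kkmodp-BM08}, which gives~\eqref{kkmodp-eq:2.98}, with the commutative $S$-algebra $\ell_p$ replaced by the associative $S$-algebra $\ell/p$ and the class $v_1 \in \pi_{2p-2}$ kept fixed; the mod~$p$ sequence is then the exact analogue of~\eqref{kkmodp-eq:2.98}, and the restriction to commutative ground algebras in~\cite{kkmodp-BM08} plays no role, since all of the constructions involved take place in module categories.

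First I would collect the input provided by Section~\ref{kkmodp-sec:base}. By the proposition above, $\ell/p$ is a connective associative $\ell_p$-algebra, hence an associative $S$-algebra, and since $v_1$ lies in the image of the central subring $\pi_*(\ell_p) = \Z_p[v_1]$ it acts centrally on every $\ell/p$-module. On homotopy one has $\pi_*(\ell/p) = \Z/p[v_1]$, on which multiplication by $v_1$ is injective, and the base change square~\eqref{kkmodp-eq:2.4} identifies the cofibre $(\ell/p)/v_1$ of $v_1 \: \Sigma^{2p-2}\ell/p \to \ell/p$ with the associative $\ell/p$-algebra $\ell/p \wedge_{\ell_p} H\Z_p \simeq H\Z/p$, an Eilenberg--Mac\,Lane spectrum on the field $\Z/p$. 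Finally, inverting $v_1$ on $\ell/p$-modules is a smashing localization whose unit $\ell/p \to \ell/p[v_1^{-1}]$ induces the localization $\Z/p[v_1] \to \Z/p[v_1^{\pm1}]$ on homotopy rings; comparing coefficients with the construction of $L/p$ in Section~\ref{kkmodp-sec:base} identifies $\ell/p[v_1^{-1}]$ with the associative $S$-algebra $L/p$.

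Next I would carry out the two steps of~\cite{kkmodp-BM08}. For the localization step, write $\mathcal{C}$ for the thick subcategory of perfect $\ell/p$-modules $M$ with $M[v_1^{-1}] \simeq *$; then passage to perfect $L/p$-modules exhibits $K(L/p)$ as the cofibre of $K(\mathcal{C}) \to K(\ell/p)$, by the localization theorem for algebraic $K$-theory applied as in~\cite{kkmodp-BM08}, so there is a cofibre sequence of spectra
\[
K(\mathcal{C}) \to K(\ell/p) \to K(L/p) \to \Sigma K(\mathcal{C})
\]
whose second map is base change along $\ell/p \to \ell/p[v_1^{-1}] = L/p$. For the dévissage step, since $(\ell/p)/v_1 = H\Z/p$ is an Eilenberg--Mac\,Lane spectrum on the field $\Z/p$, the dévissage theorem of~\cite{kkmodp-BM08} provides an equivalence $K(H\Z/p) \xrightarrow{\simeq} K(\mathcal{C})$, obtained by filtering each object of $\mathcal{C}$ by perfect $\ell/p$-modules with subquotients restricted along $\ell/p \to H\Z/p$ from perfect $H\Z/p$-modules; its composite with $K(\mathcal{C}) \to K(\ell/p)$ is the transfer associated to $\ell/p \to (\ell/p)/v_1 = H\Z/p$, which is defined because $(\ell/p)/v_1$ is a two-cell, hence perfect, $\ell/p$-module. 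Combining this with the identification $K(H\Z/p) \simeq K(\Z/p)$ of the $K$-theory of an Eilenberg--Mac\,Lane spectrum with that of its underlying ring yields the asserted cofibre sequence.

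The main obstacle is the dévissage step: dévissage for ring spectra is the substantial content of~\cite{kkmodp-BM08}, far removed from the classical abelian-category statement, so a self-contained treatment would essentially reproduce that paper. What makes the present case immediate is that every hypothesis used in their arguments---connectivity of the ground algebra, centrality of $v_1$ and its being a non-zero-divisor on homotopy, and the quotient by $v_1$ being an Eilenberg--Mac\,Lane spectrum on a regular (here even finite) ring---holds for $(\ell/p, v_1, H\Z/p)$ exactly as it does for $(\ell_p, v_1, H\Z_p)$; so the only genuine verifications needed are those recorded in Section~\ref{kkmodp-sec:base}, together with the smashing-localization identification $\ell/p[v_1^{-1}] \simeq L/p$.
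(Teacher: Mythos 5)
Your proposal is correct and follows essentially the same route as the paper: both defer to the Blumberg--Mandell argument from~\cite{kkmodp-BM08} and note that its hypotheses are satisfied by the triple $(\ell/p, v_1, H\Z/p)$ just as by $(\ell_p, v_1, H\Z_p)$. The paper's own proof is much terser, but its one explicitly highlighted verification---that a $v_1$-torsion finite cell $\ell/p$-module has finite homotopy groups concentrated in a bounded range of degrees---is the precise content of your parenthetical observation that the quotient ring $\Z/p$ is not just regular but finite, so you are not missing any essential point.
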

\begin{proof}
The proof of the existence of the localization
sequence~\eqref{kkmodp-eq:2.98} given in~\cite{kkmodp-BM08}*{p.
160--163}
and the identification of the transfer map
adapt without change to cover the mod $p$ analogue stated
in this proposition. Here we use that a finite cell
$\ell/p$-module that is $v_1$-torsion has finite homotopy
groups, and the non-zero groups are concentrated in a finite range of degrees.
\end{proof}

\section{Topological Hochschild homology }
\label{kkmodp-sec:thh}
We shall compute the $V(1)$-homotopy of the topological Hochschild
homology $\thh(-)$ and topological cyclic homology $TC(-;p)$ of the
$S$-algebras in diagram~\eqref{kkmodp-eq:2.4}, for primes $p\ge5$.  Passing to
connective covers, this also computes the $V(1)$-homotopy of the algebraic
$K$-theory spectra appearing in that square.  With these coefficients,
or more generally, after $p$-adic completion, the functors $\thh$ and $TC$
are insensitive to $p$-completion in the argument, so we shall simplify
the notation slightly by working with the associative $S$-algebras $\ell$
and $H\Z_{(p)}$ in place of $\ell_p$ and $H\Z_p$.  For ordinary rings $R$
we almost always shorten notations like $\thh(HR)$ to $\thh(R)$.

The computations follow the strategy of \cite{kkmodp-Bo2}, \cite{kkmodp-BM94},
\cite{kkmodp-BM95} and \cite{kkmodp-HM97} for $H\Z/p$ and $H\Z$, and of \cite{kkmodp-MS93}
and \cite{kkmodp-AR02} for $\ell$.  See also \cite{kkmodp-AnR05}*{\S\S4--7} for further
discussion of the $\thh$-part of such computations.  In this section
we shall compute the mod~$p$ homology of the topological Hochschild
homology of $\ell/p$ as a module over the corresponding homology for
$\ell$, for any odd prime~$p$.

\begin{remark}\label{kkmodp-rem:3.99}
Our computations are based on comparisons,
using the maps displayed in
diagram~\eqref{kkmodp-eq:2.4} above. We will abuse notation and use
the same name for classes in the homology or
$V(1)$-homotopy of $\thh(\ell_p)$, $\thh(\ell/p)$,
$\thh(\Z_p)$ or $\thh(\Z/p)$, when these classes
unambiguously correspond
to each other under the homomorphisms induced by the maps $i$ and
$\pi$ in~\eqref{kkmodp-eq:2.4}. We also use this abuse of notations
in later sections for the $V(1)$-homotopy of
$TC$, etc.
\end{remark}

We write $H_*(-)$ for homology with mod~$p$ coefficients.  It takes
values in graded $A_*$-comodules, where $A_*$ is the dual Steenrod algebra
\cite{kkmodp-Mi58}*{Th~2}.  Explicitly (for $p$ odd),
\begin{equation*}
A_* = P(\bar\xi_k \mid k\ge1) \otimes E(\bar\tau_k \mid k\ge0)
\end{equation*}
with coproduct
\begin{equation*}
\psi(\bar\xi_k) = \sum_{i+j=k} \bar\xi_i \otimes \bar\xi_j^{p^i}
\end{equation*}
and
\begin{equation*}
\psi(\bar\tau_k) = 1 \otimes \bar\tau_k + \sum_{i+j=k}  \bar\tau_i
\otimes \bar\xi_j^{p^i} \,.
\end{equation*}
Here $\bar\xi_0 = 1$, $\bar\xi_k = \chi(\xi_k)$ has degree $2(p^k-1)$
and $\bar\tau_k = \chi(\tau_k)$ has degree $2p^k-1$, where $\chi$ is
the canonical conjugation \cite{kkmodp-MM65}*{8.4}.  Then the maps $i$
and the zeroth
Postnikov sections $\pi$ of~\eqref{kkmodp-eq:2.4} induce identifications
\begin{equation*}
\begin{aligned}
H_*(H\Z_{(p)}) &=  P(\bar\xi_k \mid k\ge1) \otimes E(\bar\tau_k \mid k\ge1) \\
H_*(\ell) &=  P(\bar\xi_k \mid k\ge1) \otimes E(\bar\tau_k \mid k\ge2) \\
H_*(\ell/p) &=  P(\bar\xi_k \mid k\ge1) \otimes E(\bar\tau_0, \bar\tau_k
\mid k\ge2)
\end{aligned}
\end{equation*}
as $A_*$-comodule subalgebras of $H_*(H\Z/p) = A_*$.
We often make use of the following $A_*$-comodule coactions
\begin{equation*}
\begin{aligned}
\nu(\bar\tau_0) &= 1 \otimes \bar\tau_0 + \bar\tau_0 \otimes 1 \\
\nu(\bar\xi_1) &= 1 \otimes \bar\xi_1 + \bar\xi_1 \otimes 1 \\
\nu(\bar\tau_1) &= 1 \otimes \bar\tau_1 + \bar\tau_0 \otimes \bar\xi_1
+ \bar\tau_1 \otimes 1 \\
\nu(\bar\xi_2) &= 1 \otimes \bar\xi_2 + \bar\xi_1 \otimes \bar\xi_1^p +
\bar\xi_2 \otimes 1 \\
\nu(\bar\tau_2) &= 1 \otimes \bar\tau_2 + \bar\tau_0 \otimes \bar\xi_2
+ \bar\tau_1 \otimes \bar\xi_1^p + \bar\tau_2 \otimes 1 \,.
\end{aligned}
\end{equation*}

The B{\"o}kstedt spectral sequences
\begin{equation*}
E^2(B) = HH_*(H_*(B)) \Longrightarrow H_*(\thh(B))
\end{equation*}
for the commutative $S$-algebras $B = H\Z/p$, $H\Z_{(p)}$ and $\ell$
begin
\begin{equation*}
\begin{aligned}
E^2(\Z/p) &= A_* \otimes E(\sigma\bar\xi_k \mid k\ge1) \otimes
\Gamma(\sigma\bar\tau_k \mid k\ge0) \\
E^2(\Z_{(p)}) &= H_*(H\Z_{(p)}) \otimes E(\sigma\bar\xi_k \mid
k\ge1) \otimes \Gamma(\sigma\bar\tau_k \mid k\ge1) \\
E^2(\ell) &= H_*(\ell) \otimes E(\sigma\bar\xi_k \mid k\ge1)
\otimes \Gamma(\sigma\bar\tau_k \mid k\ge2) \,.
\end{aligned}
\end{equation*}
Here $HH_*(H_*(B))$ denotes the Hochschild homology of the
graded $\F_p$-algebra $H_*(B)$. In the above formula we made
use of the $\F_p$-linear operator
$\sigma \: H_*(B)\to HH_1(H_*(B))$, $x\mapsto \sigma x$, where $\sigma x$
is the class represented by $1\otimes x-x\otimes1$ in the Hochschild
complex. Notice that $\sigma$ is the restriction of Connes'
operator $d$ to $HH_0(H_*(B))=H_*(B)$, and is a 
derivation in the sense that 
\begin{equation*}
\sigma(xy)=x\sigma(y)+ (-1)^{|x||y|}y\sigma(x)
\end{equation*}
for all $x,y\in H_*(B)$. 
These spectral sequences are (graded) commutative $A_*$-comodule 
algebra spectral sequences,
and there are differentials
\begin{equation*}
d^{p-1}(\gamma_j\sigma\bar\tau_k) \doteq \sigma\bar\xi_{k+1}
\cdot \gamma_{j-p}\sigma\bar\tau_k
\end{equation*}
for $j\ge p$ and $k\ge0$, see \cite{kkmodp-Bo2}*{Lem.~1.3},
\cite{kkmodp-Hu96}*{Th.~1}
or \cite{kkmodp-Au05}*{Lem.~5.3}, leaving
\begin{equation*}
\begin{aligned}
E^\infty(\Z/p) &= A_* \otimes P_p(\sigma\bar\tau_k \mid k\ge0) \\
E^\infty(\Z_{(p)}) &= H_*(H\Z_{(p)}) \otimes E(\sigma\bar\xi_1)
\otimes P_p(\sigma\bar\tau_k \mid k\ge1) \\
E^\infty(\ell) &= H_*(\ell) \otimes E(\sigma\bar\xi_1,
\sigma\bar\xi_2) \otimes P_p(\sigma\bar\tau_k \mid k\ge2) \,.
\end{aligned}
\end{equation*}
The inclusion of $0$-simplices $\eta \: B \to \thh(B)$ is split for
commutative $B$ by the augmentation $\epsilon \: \thh(B) \to B$.  Thus
there are unique representatives in B{\"o}kstedt filtration~$1$, with
zero augmentation, for each of the classes $\sigma x$.  
There are
multiplicative extensions $(\sigma\bar\tau_k)^p = \sigma\bar\tau_{k+1}$
for $k\ge0$, see \cite{kkmodp-AnR05}*{Prop.~5.9}, so
\begin{equation*}
\begin{aligned}
H_*(\thh(\Z/p)) &= A_* \otimes P(\sigma\bar\tau_0) \\
H_*(\thh(\Z_{(p)})) &= H_*(H\Z_{(p)}) \otimes E(\sigma\bar\xi_1) \otimes
P(\sigma\bar\tau_1) \\
H_*(\thh(\ell)) &= H_*(\ell) \otimes E(\sigma\bar\xi_1, \sigma\bar\xi_2)
\otimes P(\sigma\bar\tau_2)
\end{aligned}
\end{equation*}
as $A_*$-comodule algebras.  The $A_*$-comodule coactions are given by
\begin{equation}
\begin{aligned}
\nu(\sigma\bar\tau_0) &= 1 \otimes \sigma\bar\tau_0 \\
\nu(\sigma\bar\xi_1) &= 1 \otimes \sigma\bar\xi_1 \\
\nu(\sigma\bar\tau_1) &= 1 \otimes \sigma\bar\tau_1  + \bar\tau_0
\otimes \sigma\bar\xi_1 \\
\nu(\sigma\bar\xi_2) &= 1 \otimes \sigma\bar\xi_2 \\
\nu(\sigma\bar\tau_2) &= 1 \otimes \sigma\bar\tau_2  + \bar\tau_0
\otimes \sigma\bar\xi_2 \,.
\end{aligned}
\label{kkmodp-eq:4.2}
\end{equation}
The natural map $\pi_* \: \thh(\ell) \to \thh(\Z_{(p)})$ induced by $\pi \:
\ell \to \Z_{(p)}$ takes $\sigma\bar\xi_2$ to $0$ and $\sigma\bar\tau_2$
to $(\sigma\bar\tau_1)^p$.  The natural map $i_* \: \thh(\Z_{(p)}) \to
\thh(\Z/p)$ induced by $i \: \Z_{(p)} \to \Z/p$ takes $\sigma\bar\xi_1$
to $0$ and $\sigma\bar\tau_1$ to $(\sigma\bar\tau_0)^p$.

The B{\"o}kstedt spectral sequence for the associative $S$-algebra
$B = \ell/p$ begins
\begin{equation*}
E^2(\ell/p) = H_*(\ell/p) \otimes E(\sigma\bar\xi_k \mid k\ge1)
\otimes \Gamma(\sigma\bar\tau_0, \sigma\bar\tau_k \mid k\ge2) \,.
\end{equation*}
It is an $A_*$-comodule module spectral sequence over the B{\"o}kstedt
spectral sequence for~$\ell$, since the $\ell$-algebra multiplication
$\ell \wedge \ell/p \to \ell/p$ is a map of associative $S$-algebras.
However, it is not itself an algebra spectral sequence, since the
product on $\ell/p$ is not commutative enough to induce a natural
product structure on $\thh(\ell/p)$.  Nonetheless, we will use the
algebra structure present at the $E^2$-term to help in naming classes.

The map $\pi \: \ell/p \to H\Z/p$ induces an injection of B{\"o}kstedt
spectral sequence $E^2$-terms, so there are differentials generated
algebraically by
\begin{equation*}
d^{p-1}(\gamma_j\sigma\bar\tau_k) \doteq \sigma\bar\xi_{k+1}
\cdot \gamma_{j-p}\sigma\bar\tau_k
\end{equation*}
for $j\ge p$, $k=0$ or $k\ge2$, leaving
\begin{equation}
E^\infty(\ell/p) = H_*(\ell/p) \otimes E(\sigma\bar\xi_2) \otimes
P_p(\sigma\bar\tau_0, \sigma\bar\tau_k \mid k\ge2)
\label{kkmodp-eq:4.3}
\end{equation}
as an $A_*$-comodule module over $E^\infty(\ell)$.  In order to
obtain $H_*(\thh(\ell/p))$,  we need to resolve the $A_*$-comodule
and $H_*(\thh(\ell))$-module extensions.  This is achieved in
Lemma~\ref{kkmodp-lem:4.6} below.

The natural map $\pi_* \: E^\infty(\ell/p) \to E^\infty(\Z/p)$
is an isomorphism in total degrees $\le (2p-2)$ and injective in
total degrees $\le (2p^2-2)$.  The first class in the kernel is
$\sigma\bar\xi_2$.  Hence there are unique classes
\begin{equation*}
1 \ ,\  \bar\tau_0 \ ,\  \sigma\bar\tau_0 \ ,\  \bar\tau_0
\sigma\bar\tau_0 \ ,\  \dots \ ,\ (\sigma\bar\tau_0)^{p-1}
\end{equation*}
in degrees $0 \le * \le 2p-2$ of $H_*(\thh(\ell/p))$, mapping to classes
with the same names in $H_*(\thh(\Z/p))$.  More concisely, these are the
monomials $\bar\tau_0^\delta (\sigma\bar\tau_0)^i$ for $0\le\delta\le1$
and $0\le i\le p-1$, except that the degree~$(2p-1)$ case $(\delta,i) =
(1,p-1)$ is omitted.  The $A_*$-comodule coaction on these classes is
given by the same formulas in $H_*(\thh(\ell/p))$ as in $H_*(\thh(\Z/p))$,
cf.~\eqref{kkmodp-eq:4.2}.

There is also a class $\bar\xi_1$ in degree~$(2p-2)$ of
$H_*(\thh(\ell/p))$
mapping to a class with the same name, and same $A_*$-coaction, in
$H_*(\thh(\Z/p))$.

In degree~$(2p-1)$, $\pi_*$ is a map of extensions from
\begin{equation*}
0 \to \F_p\{\bar\xi_1 \bar\tau_0\} \to H_{2p-1}(\thh(\ell/p)) \to
\F_p\{\bar\tau_0 (\sigma\bar\tau_0)^{p-1}\} \to 0
\end{equation*}
to
\begin{equation*}
0 \to \F_p\{\bar\tau_1, \bar\xi_1 \bar\tau_0\} \to H_{2p-1}(\thh(\Z/p))
\to \F_p\{\bar\tau_0 (\sigma\bar\tau_0)^{p-1}\} \to 0 \,.
\end{equation*}
The latter extension is canonically split by the augmentation
$\epsilon \: \thh(\Z/p) \to H\Z/p$, which uses the commutativity
of the $S$-algebra $H\Z/p$.

In degree~$2p$, the map $\pi_*$ goes from
\begin{equation*}
H_{2p}(\thh(\ell/p)) = \F_p\{\bar\xi_1 \sigma\bar\tau_0\}
\end{equation*}
to
\begin{equation*}
0 \to \F_p\{\bar\tau_0 \bar\tau_1\} \to H_{2p}(\thh(\Z/p)) \to
\F_p\{\sigma\bar\tau_1, \bar\xi_1 \sigma\bar\tau_0\} \to 0 \,.
\end{equation*}
Again the latter extension is canonically split.

\begin{lemma}
There is a unique class $y$ in $H_{2p-1}(\thh(\ell/p))$ represented by
$\bar\tau_0 (\sigma\bar\tau_0)^{p-1}$ in $E^\infty_{p-1,p}(\ell/p)$ and
mapped by $\pi_*$ to $\bar\tau_0 (\sigma\bar\tau_0)^{p-1} - \bar\tau_1$
in $H_*(\thh(\Z/p))$.
\end{lemma}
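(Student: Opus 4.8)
The plan is to single out $y$ among the $p$ lifts of the $E^\infty$-class $\bar\tau_0(\sigma\bar\tau_0)^{p-1}$ by comparing the natural circle action on $\thh(\ell/p)$ with the one on $\thh(\Z/p)$.

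First I would record the relevant groups. By the description of $E^\infty(\ell/p)$ above, $H_{2p-1}(\thh(\ell/p))$ has $\F_p$-basis $\bar\xi_1\bar\tau_0$ (B\"okstedt filtration $0$) together with any lift $\tilde y$ of $\bar\tau_0(\sigma\bar\tau_0)^{p-1}$ (B\"okstedt filtration $p-1$), and $H_{2p}(\thh(\ell/p)) = \F_p\{\bar\xi_1\sigma\bar\tau_0\}$. Since $\pi_*$ is a filtered map of B\"okstedt spectral sequences carrying the classes $\bar\tau_0$, $\sigma\bar\tau_0$, $\bar\xi_1$ to the classes of the same name, the image $\pi_*(\tilde y)$ has filtration-$(p-1)$ leading term $\bar\tau_0(\sigma\bar\tau_0)^{p-1}$, and as everything of lower filtration in this degree lies in filtration $0$ we get $\pi_*(\tilde y) = \bar\tau_0(\sigma\bar\tau_0)^{p-1} + a\bar\tau_1 + b\bar\xi_1\bar\tau_0$ in $H_{2p-1}(\thh(\Z/p)) = \F_p\{\bar\tau_0(\sigma\bar\tau_0)^{p-1},\bar\tau_1,\bar\xi_1\bar\tau_0\}$, where now $\bar\tau_0(\sigma\bar\tau_0)^{p-1}$ denotes the honest product, $a \in \F_p$ is independent of the choice of $\tilde y$, and $b$ increases by $1$ when $\tilde y$ is replaced by $\tilde y+\bar\xi_1\bar\tau_0$. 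Granting that $a=-1$, the lift $y := \tilde y - b\bar\xi_1\bar\tau_0$ is still represented by $\bar\tau_0(\sigma\bar\tau_0)^{p-1}$ in $E^\infty_{p-1,p}(\ell/p)$ and satisfies $\pi_*(y) = \bar\tau_0(\sigma\bar\tau_0)^{p-1} - \bar\tau_1$; it is the unique such lift, since altering $y$ by a nonzero multiple of $\bar\xi_1\bar\tau_0$ alters $\pi_*(y)$ by the same nonzero multiple. So everything reduces to the equality $a = -1$.

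To compute $a$ I would use Connes' operator $d$ associated with the circle action on $\thh$: it is natural in the $S$-algebra variable, satisfies $d^2 = 0$ for $p$ odd, is a graded derivation of $H_*(\thh(B))$ when $B$ is a commutative $S$-algebra, and is $H_*(\thh(\ell))$-linear on $H_*(\thh(\ell/p))$ because $\thh(\ell/p)$ is a $\thh(\ell)$-module in $S^1$-equivariant spectra. On $H_*(\thh(\Z/p)) = A_* \otimes P(\sigma\bar\tau_0)$ the operator $d$ restricts on B\"okstedt filtration $0 = H_*(H\Z/p)$ to $\sigma$, so $d(\bar\tau_0) = \sigma\bar\tau_0$, $d(\sigma\bar\tau_0) = d^2(\bar\tau_0) = 0$, and $d(\bar\tau_1) = \sigma\bar\tau_1 = (\sigma\bar\tau_0)^p$ by the multiplicative extension $(\sigma\bar\tau_k)^p = \sigma\bar\tau_{k+1}$ (the residual filtration-$0$ ambiguities being killed by the compatibility of $\epsilon\colon\thh(\Z/p)\to H\Z/p$ with the trivial circle action on $H\Z/p$). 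The derivation property then gives $d(\bar\tau_0(\sigma\bar\tau_0)^{p-1}) = (\sigma\bar\tau_0)^p$ and $d(\bar\xi_1\bar\tau_0) = d(\bar\xi_1)\bar\tau_0 + \bar\xi_1\sigma\bar\tau_0 = \bar\xi_1\sigma\bar\tau_0$, the term $d(\bar\xi_1)\bar\tau_0$ vanishing because $d(\bar\xi_1)$ has filtration $\geq 2$ (its filtration-$1$ part would be $\sigma\bar\xi_1$, which is a $d^{p-1}$-boundary in the B\"okstedt spectral sequence for $H\Z/p$), hence is a multiple of $\bar\tau_0(\sigma\bar\tau_0)^{p-1}$, and $\bar\tau_0^2 = 0$. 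Therefore $d(\pi_*(\tilde y)) = (1+a)(\sigma\bar\tau_0)^p + b\bar\xi_1\sigma\bar\tau_0$ in $H_{2p}(\thh(\Z/p))$.

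Finally, naturality of $d$ gives $\pi_*(d(\tilde y)) = d(\pi_*(\tilde y))$. The left-hand side lies in $\pi_*(H_{2p}(\thh(\ell/p))) = \pi_*(\F_p\{\bar\xi_1\sigma\bar\tau_0\})$; since $\bar\xi_1\sigma\bar\tau_0$ is $\bar\xi_1 \in H_*(\ell)$ acting on $\sigma\bar\tau_0$ and $\pi_*$ is $H_*(\thh(\ell))$-linear, one has $\pi_*(\bar\xi_1\sigma\bar\tau_0) = \bar\xi_1\sigma\bar\tau_0$, so $\pi_*(d(\tilde y))$ has no $(\sigma\bar\tau_0)^p$-summand. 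As $(\sigma\bar\tau_0)^p$ and $\bar\xi_1\sigma\bar\tau_0$ are linearly independent in $H_{2p}(\thh(\Z/p))$, comparing with the formula above forces $1+a = 0$. The step I expect to be the main obstacle is the circle-action bookkeeping: one must check that Connes' operator is defined and natural on $\thh$ of the associative, non-commutative $S$-algebra $\ell/p$, that it respects the $\thh(\ell)$-module structure used above, and, most delicately, that $d(\bar\tau_1)$ equals $(\sigma\bar\tau_0)^p$ on the nose rather than only up to a unit, which is exactly where the normalization that $d$ restricts to $\sigma$ on filtration $0$, together with the multiplicative extension, is needed.
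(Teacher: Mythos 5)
Your proof is correct and follows essentially the same route as the paper's: the circle-action operator you call $d$ is exactly the suspension $\sigma$ used in the paper (which identifies $\sigma$ as the restriction of Connes' operator), and both arguments pin down the $\bar\tau_1$-coefficient by applying this operator, using naturality along $\pi_*$, the Leibniz rule in the commutative target $\thh(\Z/p)$, the multiplicative extension $(\sigma\bar\tau_0)^p=\sigma\bar\tau_1$, and the fact that $H_{2p}(\thh(\ell/p))=\F_p\{\bar\xi_1\sigma\bar\tau_0\}$ forces the $(\sigma\bar\tau_0)^p$-component of $\pi_* d(\tilde y)$ to vanish. One small inaccuracy: $d$ is not $H_*(\thh(\ell))$-linear on $H_*(\thh(\ell/p))$ but rather satisfies a Leibniz rule over the module structure; however, you never actually invoke that claim (only $\pi_*$-linearity over $H_*(\thh(\ell))$, which is correct), so the argument stands.
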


\begin{proof}
This follows from naturality of the suspension operator $\sigma$ and
the multiplicative relation $(\sigma\bar\tau_0)^p = \sigma\bar\tau_1$
in $H_*(\thh(\Z/p))$.  A class $y$ in $H_{2p-1}(\thh(\ell/p))$ represented
by $\bar\tau_0 (\sigma\bar\tau_0)^{p-1}$ is determined modulo $\bar\xi_1
\bar\tau_0$.  Its image in $H_{2p-1}(\thh(\Z/p))$ thus has the form $\alpha
\bar\tau_1 + \bar\tau_0 (\sigma\bar\tau_0)^{p-1}$ modulo $\bar\xi_1
\bar\tau_0$, for some $\alpha \in \F_p$.  The suspension $\sigma y$
lies in $H_{2p}(\thh(\ell/p)) = \F_p\{\bar\xi_1 \sigma\bar\tau_0\}$, so
its image in $H_{2p}(\thh(\Z/p))$ is $0$ modulo $\bar\tau_0 \bar\tau_1$
and $\bar\xi_1 \sigma\bar\tau_0$.  It is also the suspension of
$\alpha \bar\tau_1 + \bar\tau_0 (\sigma\bar\tau_0)^{p-1}$ modulo
$\bar\xi_1 \bar\tau_0$, which equals $\sigma(\alpha \bar\tau_1) +
(\sigma\bar\tau_0)^p = (\alpha+1) \sigma\bar\tau_1$.  In particular, the
coefficient $(\alpha + 1)$ of $\sigma\bar\tau_1$ is $0$, so $\alpha=-1$.
\end{proof}

Let
\begin{equation*}
H_*(\thh(\ell))/(\sigma\bar\xi_1) =
  H_*(\ell) \otimes E(\sigma\bar\xi_2) \otimes P(\sigma\bar\tau_2)
\end{equation*}
denote the quotient algebra of $H_*(\thh(\ell))$ by the ideal generated
by $\sigma\bar\xi_1$.

\begin{lemma}\label{kkmodp-lem:4.6}
The classes 
\begin{equation*}
1 \ ,\  \bar\tau_0 \ ,\  \sigma\bar\tau_0 \ ,\  \bar\tau_0
\sigma\bar\tau_0 \ ,\  \dots \ ,\ (\sigma\bar\tau_0)^{p-1} \ ,\
\bar\tau_0 (\sigma\bar\tau_0)^{p-1} \,,
\end{equation*}
in $E^\infty(\ell/p)$ represent unique homology classes in
$H_*(\thh(\ell/p))$, which by abuse of notation will be
denoted 
\begin{equation*}
1 \ ,\  \bar\tau_0 \ ,\  \sigma\bar\tau_0 \ ,\  \bar\tau_0
\sigma\bar\tau_0 \ ,\  \dots \ ,\ (\sigma\bar\tau_0)^{p-1} \
,\  y \,,
\end{equation*}
mapping under $\pi_*$ to classes with the same names in $H_*(\thh(\Z/p))$,
except for~$y$, which maps to
\begin{equation*}
\bar\tau_0 (\sigma\bar\tau_0)^{p-1} - \bar\tau_1 \,.
\end{equation*}
The graded $H_*(\thh(\ell))$-module $H_*(\thh(\ell/p))$ is
a free $H_*(\thh(\ell))/(\sigma\bar\xi_1)$-module of rank
$2p$ generated by these classes in degrees $0$
through~$2p-1$:
\begin{equation*} 
H_*(\thh(\ell/p)) =  H_*(\thh(\ell))/(\sigma\bar\xi_1)
  \otimes \F_p\{1, \bar\tau_0, \sigma\bar\tau_0, \bar\tau_0
  \sigma\bar\tau_0, \dots, (\sigma\bar\tau_0)^{p-1}, y\} \,.
\end{equation*}
The $A_*$-comodule coactions are given by
\begin{equation*}
\nu((\sigma\bar\tau_0)^i) = 1 \otimes (\sigma\bar\tau_0)^i
\end{equation*}
for $0\le i\le p-1$,
\begin{equation*}
\nu(\bar\tau_0 (\sigma\bar\tau_0)^i) = 1 \otimes \bar\tau_0
  (\sigma\bar\tau_0)^i + \bar\tau_0 \otimes (\sigma\bar\tau_0)^i
\end{equation*}
for $0 \le i \le p-2$, and
\begin{equation*}
\nu(y) = 1 \otimes y + \bar\tau_0 \otimes (\sigma\bar\tau_0)^{p-1} -
  \bar\tau_0 \otimes \bar\xi_1 - \bar\tau_1 \otimes 1 \,.
\end{equation*}
\end{lemma}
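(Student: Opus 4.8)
The plan is to read the free module decomposition off the $E^\infty$-term~\eqref{kkmodp-eq:4.3}, lift its module generators to homology classes already at hand, resolve the remaining $H_*(\thh(\ell))$-module extension by comparison with $\thh(\Z/p)$, and pin down the $A_*$-coactions by naturality of $\nu$. Every comparison passes through $\pi_*\colon H_*(\thh(\ell/p))\to H_*(\thh(\Z/p))$, which by the discussion preceding the lemma is an isomorphism in total degrees $\le 2p-2$ and injective in total degrees $<2p^2-2$.

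Rewriting~\eqref{kkmodp-eq:4.3}, the $E^\infty$-term is $E^\infty(\ell)/(\sigma\bar\xi_1)\otimes E(\bar\tau_0)\otimes P_p(\sigma\bar\tau_0)$ as a module over $E^\infty(\ell)$, that is, a free module over $E^\infty(\ell)/(\sigma\bar\xi_1)$ on the classes representing $1,\bar\tau_0,\sigma\bar\tau_0,\dots,(\sigma\bar\tau_0)^{p-1}$ and $y$; and $E^\infty(\ell)/(\sigma\bar\xi_1)$ is the associated graded of $H_*(\thh(\ell))/(\sigma\bar\xi_1)$ for the B{\"o}kstedt filtration. The $2p$ classes in the statement were constructed, together with their images under $\pi_*$, before the statement of the lemma and (for $y$) in the preceding lemma, and they lift the $E^\infty$-generators without raising B{\"o}kstedt filtration. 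Hence the associated $H_*(\thh(\ell))$-linear map from the free module on $2p$ generators to $H_*(\thh(\ell/p))$ is surjective, since on associated graded modules it surjects onto $E^\infty(\ell/p)$ and the B{\"o}kstedt filtration is exhaustive and bounded below.

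The step I expect to be the main obstacle is to show that $\sigma\bar\xi_1\in H_{2p-1}(\thh(\ell))$ annihilates $H_*(\thh(\ell/p))$, so that the map above factors through the free $H_*(\thh(\ell))/(\sigma\bar\xi_1)$-module; granting this, the resulting surjection is an isomorphism on associated graded modules by~\eqref{kkmodp-eq:4.3}, hence is an isomorphism, which is the asserted free decomposition. Since $H_*(\thh(\ell))$ is commutative and the $2p$ classes generate $H_*(\thh(\ell/p))$ over it, it suffices to check $\sigma\bar\xi_1\cdot g=0$ for each such generator $g$. Here one uses that $\sigma\bar\xi_1\cdot g$ lies in degree at most $(2p-1)+(2p-1)=4p-2<2p^2-2$, a range where $\pi_*$ is injective, together with the fact that $\pi_*(\sigma\bar\xi_1)=0$ in $H_*(\thh(\Z/p))$ — because $\sigma\bar\xi_1$ maps to $\sigma\bar\xi_1$ in $H_*(\thh(\Z_{(p)}))$ and that class maps to $0$ in $H_*(\thh(\Z/p))$ — so that $\pi_*(\sigma\bar\xi_1\cdot g)=0$ forces $\sigma\bar\xi_1\cdot g=0$.

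For the $A_*$-comodule coactions I would first check that $\mathrm{id}\otimes\pi_*$ is injective on $(A_*\otimes H_*(\thh(\ell/p)))_j$ for $j\le 2p-1$: for $j\le 2p-2$ this follows from $\pi_*$ being an isomorphism in those degrees, and for $j=2p-1$ from $\pi_*$ sending the basis $\{\bar\xi_1\bar\tau_0,\,y\}$ of $H_{2p-1}(\thh(\ell/p))$ to the linearly independent classes $\bar\xi_1\bar\tau_0$ and $\bar\tau_0(\sigma\bar\tau_0)^{p-1}-\bar\tau_1$. By naturality of $\nu$ the coaction on each of the $2p$ generators is then the unique preimage of its coaction in $H_*(\thh(\Z/p))$. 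Powers of the primitive class $\sigma\bar\tau_0$ are primitive, giving $\nu((\sigma\bar\tau_0)^i)=1\otimes(\sigma\bar\tau_0)^i$; then $\nu(\bar\tau_0)=1\otimes\bar\tau_0+\bar\tau_0\otimes 1$ yields $\nu(\bar\tau_0(\sigma\bar\tau_0)^i)=1\otimes\bar\tau_0(\sigma\bar\tau_0)^i+\bar\tau_0\otimes(\sigma\bar\tau_0)^i$ for $0\le i\le p-2$; and for $y$, combining $\pi_*(y)=\bar\tau_0(\sigma\bar\tau_0)^{p-1}-\bar\tau_1$ with $\nu(\bar\tau_1)=1\otimes\bar\tau_1+\bar\tau_0\otimes\bar\xi_1+\bar\tau_1\otimes 1$ gives $\nu(\pi_*y)=1\otimes\pi_*(y)+\bar\tau_0\otimes(\sigma\bar\tau_0)^{p-1}-\bar\tau_0\otimes\bar\xi_1-\bar\tau_1\otimes 1$, whose preimage is the claimed formula for $\nu(y)$.
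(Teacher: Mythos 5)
Your proposal is correct and follows essentially the same route as the paper's proof: read the $E^\infty$-term~\eqref{kkmodp-eq:4.3} as a free $E^\infty(\ell)/(\sigma\bar\xi_1)$-module, kill the action of $\sigma\bar\xi_1$ on the $2p$ generators by pushing to $H_*(\thh(\Z/p))$ (using injectivity of $\pi_*$ through degree $4p-2$ and the vanishing of the image of $\sigma\bar\xi_1$ there), and then determine the $A_*$-coactions by naturality of $\nu$ together with injectivity of $1\otimes\pi_*$ through degree $2p-1$. The extra bookkeeping you supply about surjectivity and passage from associated graded to the filtered module is implicit in the paper but matches its argument, so there is no substantive difference in approach.
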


\begin{proof}
$H_*(\ell/p)$ is freely generated as a module over $H_*(\ell)$ by $1$
and $\bar\tau_0$, and the classes $\sigma\bar\xi_2$ and
$\sigma\bar\tau_2$ in $H_*(\thh(\ell))$ induce multiplication by the
same symbols in $E^\infty(\ell/p)$, as given in~\eqref{kkmodp-eq:4.3}.  This
generates all of $E^\infty(\ell/p)$ from the $2p$ classes
$\bar\tau_0^\delta (\sigma\bar\tau_0)^i$ for $0\le\delta\le1$
and $0\le i\le p-1$.

We claim that multiplication by $\sigma\bar\xi_1$ acts trivially on
$H_*(\thh(\ell/p))$.  It suffices to verify this on the module
generators $\bar\tau_0^\delta (\sigma\bar\tau_0)^i$, for which the
product with $\sigma\bar\xi_1$ remains in the range of degrees where
the map to $H_*(\thh(\Z/p))$ is injective.  The action of
$\sigma\bar\xi_1$ is trivial on $H_*(\thh(\Z/p))$, since
$d^{p-1}(\gamma_p \sigma\bar\tau_0) \doteq \sigma\bar\xi_1$ and
$\epsilon(\sigma\bar\xi_1) = 0$, and this implies the claim.

The $A_*$-comodule coaction on each module generator, including $y$, is
determined by that on its image under $\pi_*$.
In the latter case, for example, we have
\begin{equation*}
\begin{aligned}
(1 \otimes \pi_*)(\nu(y)) &= \nu(\pi_*(y)) =
	\nu(\bar\tau_0(\sigma\bar\tau_0)^{p-1}-\bar\tau_1) \\
&= 1 \otimes \bar\tau_0(\sigma\bar\tau_0)^{p-1} + \bar\tau_0 \otimes
	(\sigma\bar\tau_0)^{p-1} - 1 \otimes \bar\tau_1 - \bar\tau_0
	\otimes \bar\xi_1 - \bar\tau_1 \otimes 1 \\
&= (1 \otimes \pi_*)(1 \otimes y + \bar\tau_0 \otimes
(\sigma\bar\tau_0)^{p-1} - \bar\tau_0 \otimes \bar\xi_1 -
\bar\tau_1 \otimes 1) \,,
\end{aligned}
\end{equation*} 
and this proves our formula for $\nu(y)$ since
$1\otimes \pi_*$ is injective in this degree.
\end{proof}

\begin{remark}
  Notice that Lemma~\ref{kkmodp-lem:4.6} implies that for different
  choices of 
  $\ell$-module structure on $\ell/p$, the resulting
  homology groups $H_*(\thh(\ell/p))$ are (abstractly) isomorphic as
  graded $H_*(\thh(\ell))$-modules and $A_*$-comodules.
\end{remark}

\section{Passage to $V(1)$-homotopy }
\label{kkmodp-sec:v1}

For $p\ge5$ the Smith--Toda complex $V(1) = S \cup_p e^1
\cup_{\alpha_1} e^{2p-1} \cup_p e^{2p}$ is a homotopy commutative
ring spectrum~\cite{kkmodp-Sm70}*{Th~5.1}, \cite{kkmodp-Ok84}*{Ex.~4.5}.  It is defined as the
mapping cone of the Adams self-map $v_1 \: \Sigma^{2p-2}
V(0) \to V(0)$ of the mod~$p$ Moore spectrum $V(0) = S \cup_p e^1$.
Hence there is a cofiber sequence
\begin{equation*}
\Sigma^{2p-2} V(0) \xr{v_1} V(0) \xr{i_1} V(1) \xr{j_1}
	\Sigma^{2p-1} V(0) \,.
\end{equation*}
There are some choices of orientations involved in fixing such an exact
triangle, compare for instance with~\cite{kkmodp-HM03}*{Sect.~2.1}.
The composite map $\beta_{1,1} = i_1 j_1 \: V(1) \to
\Sigma^{2p-1} V(1)$ defines the primary $v_1$-Bockstein
homomorphism, acting naturally on $V(1)_*(X)$.

In this section we compute $V(1)_* \thh(\ell/p)$ as a
module over $V(1)_* \thh(\ell)$, for any prime $p\ge5$.  The unique ring
spectrum map from $V(1)$ to $H\Z/p$ induces the identification
\begin{equation*}
H_*(V(1)) = E(\tau_0, \tau_1)
 \end{equation*}
(no conjugations) as $A_*$-comodule subalgebras of $A_*$,
see~\cite{kkmodp-To71}*{\S 4}.  Here
\begin{equation*}
\begin{aligned}
\nu(\tau_0) &= 1 \otimes \tau_0 + \tau_0 \otimes 1 \\
\nu(\tau_1) &= 1 \otimes \tau_1 + \xi_1 \otimes \tau_0 + \tau_1 \otimes 1
\,.
\end{aligned}
\end{equation*}

A form of the following lemma goes back to \cite{kkmodp-Wh62}*{p.~271}.

\begin{lemma}\label{kkmodp-lem:4.45}
Let $M$ be any $H\Z/p$-module spectrum.  Then $M$ is equivalent to a wedge
sum of suspensions of $H\Z/p$.
Hence $H_*(M)$ is a sum of shifted copies of $A_*$ as an $A_*$-comodule,
and the Hurewicz homomorphism $\pi_*(M) \to H_*(M)$ identifies $\pi_*(M)$
with the $A_*$-comodule primitives in $H_*(M)$.
\end{lemma}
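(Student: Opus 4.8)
The plan is to exploit that $\pi_*(H\Z/p) = \F_p$ is a graded field, so that $H\Z/p$-module spectra behave exactly like graded $\F_p$-vector spaces. First I would note that $\pi_*(M)$ is a graded module over $\F_p$, hence free: choose a homogeneous basis $\{x_\alpha\}_{\alpha \in I}$ with $x_\alpha \in \pi_{n_\alpha}(M)$. Each $x_\alpha$ is represented by a map $S^{n_\alpha} \to M$ of spectra, which under the free--forgetful adjunction between $S$-modules and $H\Z/p$-modules corresponds to a map $\tilde x_\alpha \colon \Sigma^{n_\alpha} H\Z/p \to M$ of $H\Z/p$-modules (here $\Sigma^{n_\alpha} H\Z/p = H\Z/p \wedge S^{n_\alpha}$ is the free $H\Z/p$-module on $S^{n_\alpha}$). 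Assembling these gives a map of $H\Z/p$-modules
\[
f \colon \bigvee_{\alpha \in I} \Sigma^{n_\alpha} H\Z/p \longrightarrow M \,.
\]

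Next I would check that $f$ is a weak equivalence. The source has $\pi_*\bigl(\bigvee_{\alpha} \Sigma^{n_\alpha} H\Z/p\bigr) = \bigoplus_{\alpha} \Sigma^{n_\alpha}\F_p$, and $f_*$ carries the generator of the $\alpha$-th summand to $x_\alpha$; since the $x_\alpha$ form a basis, $f_*$ is an isomorphism, and therefore $f$ is a weak equivalence. This proves the first assertion, that $M \simeq \bigvee_{\alpha} \Sigma^{n_\alpha} H\Z/p$. For the homology statement, I would use that mod $p$ homology sends wedges to direct sums together with $H_*(\Sigma^n H\Z/p) = \Sigma^n A_*$ as an $A_*$-comodule, the coaction being the coproduct of $A_* = H_*(H\Z/p)$; hence $H_*(M) \cong \bigoplus_{\alpha} \Sigma^{n_\alpha} A_*$, a sum of shifted copies of $A_*$.

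For the Hurewicz assertion I would argue one summand at a time. The Hurewicz map for $\Sigma^n H\Z/p$ identifies $\pi_n = \F_p$ with the line $\F_p \cdot 1 \subset \Sigma^n A_*$ in degree $n$, and this line is precisely the submodule of $A_*$-comodule primitives of $\Sigma^n A_*$: if $\nu(z) = 1 \otimes z$ then $z = (\mathrm{id}\otimes\epsilon)\nu(z) = \epsilon(z)\cdot 1 \in \F_p$, since $A_*$ is connected with counit $\epsilon$. Taking the direct sum over $\alpha$, and using that both $\pi_*(-)$ and the passage to comodule primitives (the kernel of $\nu - (1\otimes\mathrm{id})$) commute with direct sums, I conclude that $\pi_*(M) \to H_*(M)$ is the inclusion of the $A_*$-comodule primitives.

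There is no genuinely hard step: the content is classical, as the reference to Whitehead indicates. The only points deserving a little care are the use of the free-module adjunction to promote the maps $S^{n_\alpha} \to M$ to $H\Z/p$-module maps, the fact that a map of spectra inducing an isomorphism on all homotopy groups is a weak equivalence (so that an arbitrary, possibly infinite, wedge suffices), and the elementary identification of the comodule primitives of $A_*$ with $\F_p$.
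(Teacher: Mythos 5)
Your proof is correct and follows essentially the same route as the paper's: pick a basis of $\pi_*(M)$, promote each generator to an $H\Z/p$-module map out of a shifted copy of $H\Z/p$, and observe the resulting wedge map is a $\pi_*$-isomorphism. The only cosmetic differences are that you justify $\pi_*(M)$ being a graded $\F_p$-vector space via the ring structure $\pi_*(H\Z/p) = \F_p$ rather than via the retraction $H\Z/p \wedge M \to M$, and you phrase the extension of $x_\alpha$ to an $H\Z/p$-module map via the free--forgetful adjunction rather than writing out $\lambda \circ (1 \wedge x_\alpha)$ explicitly (these are the same map); you also spell out the Hurewicz/primitives identification, which the paper leaves implicit.
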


\begin{proof}
The module action map $\lambda \: H\Z/p \wedge M \to M$ is a retraction,
so $\pi_*(M)$ is a direct summand of $\pi_*(H\Z/p \wedge M) = H_*(M)$,
hence is a graded $\Z/p$-vector space.  Choose maps $\alpha \: S^n \to M$
that represent a basis for this vector space.  The wedge sum of the maps
\begin{equation*}
\lambda \circ (1\wedge\alpha) \: \Sigma^n H\Z/p = H\Z/p \wedge S^n
\to M
\end{equation*}
is the desired $\pi_*$-isomorphism $\bigvee_{\alpha} \Sigma^n
H\Z/p \to M$.
\end{proof}

For each $\ell$-algebra $B$, $V(1) \wedge \thh(B)$ is a module spectrum
over $V(1) \wedge \thh(\ell)$ and thus over $V(1) \wedge \ell \simeq
H\Z/p$, so $H_*(V(1) \wedge \thh(B))$ is a sum of copies of $A_*$ as
an $A_*$-comodule, by Lemma~\ref{kkmodp-lem:4.45}.  In particular, $V(1)_* \thh(B) = \pi_*(V(1) \wedge
\thh(B))$ is naturally identified with the subgroup of $A_*$-comodule
primitives
in
\begin{equation*}
H_*(V(1) \wedge \thh(B)) \cong H_*(V(1)) \otimes H_*(\thh(B))
\end{equation*}
with the diagonal $A_*$-comodule coaction.  We write $v \wedge x$ for
the image of $v \otimes x$ under this identification, with $v \in
H_*(V(1))$ and $x \in H_*(\thh(B))$.  Let
\begin{equation}
\begin{aligned}
\epsilon_0 &= 1 \wedge \bar\tau_0 + \tau_0 \wedge 1 \\
\epsilon_1 &= 1 \wedge \bar\tau_1 + \tau_0 \wedge \bar\xi_1 + \tau_1
\wedge 1 \\
\lambda_1 &= 1 \wedge \sigma\bar\xi_1 \\
\lambda_2 &= 1 \wedge \sigma\bar\xi_2 \\
\mu_0 &= 1 \wedge \sigma\bar\tau_0 \\
\mu_1 &= 1 \wedge \sigma\bar\tau_1 + \tau_0 \wedge \sigma\bar\xi_1 \\
\mu_2 &= 1 \wedge \sigma\bar\tau_2 + \tau_0 \wedge \sigma\bar\xi_2 \,.
\end{aligned}
\label{kkmodp-eq:5.1}
\end{equation}
These are all $A_*$-comodule primitive, when defined, in $H_*(V(1) \wedge
\thh(B))$ for $B=\ell$, $\ell/p$, $H\Z_p$ or $H\Z/p$ (see
Remark~\ref{kkmodp-rem:3.99}).  By a dimension
count,
\begin{equation*}
\begin{aligned}
V(1)_* \thh(\Z/p) &= E(\epsilon_0, \epsilon_1) \otimes P(\mu_0) \\
V(1)_* \thh(\Z_{(p)}) &= E(\epsilon_1) \otimes E(\lambda_1) \otimes P(\mu_1) \\
V(1)_* \thh(\ell) &= E(\lambda_1, \lambda_2) \otimes P(\mu_2)
\end{aligned}
\end{equation*}
as commutative $\F_p$-algebras.  The map $\pi \: \ell \to H\Z_{(p)}$
takes $\lambda_2$ to $0$ and $\mu_2$ to $\mu_1^p$.  The map $i \:
H\Z_{(p)} \to H\Z/p$ takes $\lambda_1$ to $0$ and $\mu_1$ to
$\mu_0^p$.  Note that $\mu_2 \in V(1)_{2p^2} \thh(\ell)$ was
simply denoted $\mu$ in \cite{kkmodp-AR02}.

In degrees $\le (2p-2)$ of $H_*(V(1) \wedge \thh(\ell/p))$ the classes
\begin{equation}
\mu_0^i := 1 \wedge (\sigma\bar\tau_0)^i
\label{kkmodp-eq:5.3.a}
\end{equation}
for $0\le i\le p-1$ and
\begin{equation}
\epsilon_0 \mu_0^i := 1 \wedge \bar\tau_0 (\sigma\bar\tau_0)^i + \tau_0
\wedge (\sigma\bar\tau_0)^i
\label{kkmodp-eq:5.3.b}
\end{equation}
for $0\le i\le p-2$ are $A_*$-comodule primitive, hence lift uniquely
to $V(1)_* \thh(\ell/p)$.  These map to the classes $\epsilon_0^\delta
\mu_0^i$ in $V(1)_* \thh(\Z/p)$ for $0\le\delta\le1$ and $0\le
i\le p-1$, except that the degree bound excludes the top case of
$\epsilon_0\mu_0^{p-1}$.

In degree $(2p-1)$ of $H_*(V(1) \wedge \thh(\ell/p))$ we have generators
$1 \wedge \bar\xi_1 \bar\tau_0$, $\tau_0 \wedge
(\sigma\bar\tau_0)^{p-1}$, $\tau_0 \wedge \bar\xi_1$, $\tau_1 \wedge 1$
and $1 \wedge y$.  These have coactions
\begin{equation*}
\begin{aligned}
\nu(1 \wedge \bar\xi_1 \bar\tau_0) &= 1 \otimes 1 \wedge \bar\xi_1
\bar\tau_0 + \bar\tau_0 \otimes 1 \wedge \bar\xi_1 + \bar\xi_1 \otimes
1 \wedge \bar\tau_0 + \bar\xi_1 \bar\tau_0 \otimes 1 \wedge 1 \\
\nu(\tau_0 \wedge (\sigma\bar\tau_0)^{p-1}) &= 1 \otimes \tau_0 \wedge 
(\sigma\bar\tau_0)^{p-1} + \tau_0 \otimes 1 \wedge (\sigma\bar\tau_0)^{p-1} \\
\nu(\tau_0 \wedge \bar\xi_1) &= 1 \otimes \tau_0 \wedge \bar\xi_1 +
\tau_0 \otimes 1 \wedge \bar\xi_1 + \bar\xi_1 \otimes \tau_0 \wedge 1
+ \bar\xi_1\tau_0 \otimes 1 \wedge 1 \\
\nu(\tau_1 \wedge 1) &= 1 \otimes \tau_1 \wedge 1 + \xi_1 \otimes \tau_0
\wedge 1 + \tau_1 \otimes 1 \wedge 1
\end{aligned}
\end{equation*}
and
\begin{equation*}
\nu(1 \wedge y) = 1 \otimes 1 \wedge y + \bar\tau_0 \otimes 1 \wedge
	(\sigma\bar\tau_0)^{p-1} - \bar\tau_0 \otimes 1 \wedge \bar\xi_1 -
	\bar\tau_1 \otimes 1 \wedge 1 \,.
\end{equation*}
Hence the sum
\begin{equation}
\bar\epsilon_1 := 1 \wedge y + \tau_0 \wedge
(\sigma\bar\tau_0)^{p-1} - \tau_0 \wedge \bar\xi_1 - \tau_1 \wedge 1
\label{kkmodp-eq:5.3.c}
\end{equation}
is $A_*$-comodule primitive.  Its image under $\pi_*$ in $H_*(V(1)
\wedge \thh(\Z/p))$
is
\begin{equation*}
\epsilon_0 \mu_0^{p-1} - \epsilon_1 = 1 \wedge \bar\tau_0
(\sigma\bar\tau_0)^{p-1} + \tau_0 \wedge (\sigma\bar\tau_0)^{p-1} - 1
\wedge \bar\tau_1 - \tau_0 \wedge \bar\xi_1 - \tau_1 \wedge 1 \,.
\end{equation*}

Let
\begin{equation*}
V(1)_* \thh(\ell)/(\lambda_1) = E(\lambda_2) \otimes P(\mu_2)
\end{equation*}
be the quotient algebra of $V(1)_* \thh(\ell)$ by the ideal generated
by $\lambda_1$.

\begin{proposition}\label{kkmodp-prop:5.4}
The classes 
\begin{equation*} 
1 \ ,\  \epsilon_0 \ ,\  \mu_0 \ ,\  \epsilon_0 \mu_0 \ ,\  \dots \ ,\
\mu_0^{p-1} \ ,\  \bar\epsilon_1\in H_*(V(1) \wedge
\thh(\ell/p)) 
\end{equation*}
defined in~\eqref{kkmodp-eq:5.3.a}, \eqref{kkmodp-eq:5.3.b}
and~\eqref{kkmodp-eq:5.3.c} have unique lifts with same names
in $V(1)_* \thh(\ell/p)$. The graded $V(1)_*
\thh(\ell)$-module $V(1)_* \thh(\ell/p)$ is a free
$V(1)_* \thh(\ell)/(\lambda_1)$-mod\-ule generated by these $2p$
classes: 
\begin{equation*} 
V(1)_* \thh(\ell/p) = V(1)_* \thh(\ell)/(\lambda_1) \otimes \F_p\{1,
\epsilon_0, \mu_0, \epsilon_0 \mu_0, \dots, \mu_0^{p-1}, \bar\epsilon_1\}
\,.
\end{equation*}
The map $\pi_*$ to $V(1)_* \thh(\Z/p)$
takes $\epsilon_0^\delta \mu_0^i$ in degree $0 \le \delta + 2i \le 2p-2$
to $\epsilon_0^\delta \mu_0^i$, and takes $\bar\epsilon_1$ in degree
$(2p-1)$ to $\epsilon_0 \mu_0^{p-1} - \epsilon_1$.
\end{proposition}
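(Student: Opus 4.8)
The plan is to combine the module structure over $V(1)_*\thh(\ell)$ with the Hurewicz/primitives identification of Lemma~\ref{kkmodp-lem:4.45}, using the $A_*$-comodule computation of $H_*(\thh(\ell/p))$ from Lemma~\ref{kkmodp-lem:4.6} as input. First I would record that, by Lemma~\ref{kkmodp-lem:4.45} applied to the $H\Z/p$-module $V(1)\wedge\thh(\ell/p)$ (which is a module over $V(1)\wedge\thh(\ell)$ and hence over $V(1)\wedge\ell\simeq H\Z/p$), the group $V(1)_*\thh(\ell/p)$ is identified with the $A_*$-comodule primitives in $H_*(V(1))\otimes H_*(\thh(\ell/p))$ equipped with the diagonal coaction. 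Next I would apply the same identification to $\thh(\ell)$ itself, so that $V(1)_*\thh(\ell)=E(\lambda_1,\lambda_2)\otimes P(\mu_2)$ sits inside $H_*(V(1))\otimes H_*(\thh(\ell))$ as the primitives, and the module map $V(1)_*\thh(\ell)\to V(1)_*\thh(\ell/p)$ is induced by the $\ell$-algebra map $\ell\to\ell/p$, i.e.\ by the inclusion $H_*(\thh(\ell))\hookrightarrow H_*(\thh(\ell/p))$ of $A_*$-comodules from Lemma~\ref{kkmodp-lem:4.6}.

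The second step is to verify that the listed classes are primitive and lift uniquely. For $\mu_0^i=1\wedge(\sigma\bar\tau_0)^i$ and $\epsilon_0\mu_0^i$ this is exactly the computation in~\eqref{kkmodp-eq:5.3.a} and~\eqref{kkmodp-eq:5.3.b}, using the coactions from Lemma~\ref{kkmodp-lem:4.6} together with $\nu(\tau_0)=1\otimes\tau_0+\tau_0\otimes1$; for $\bar\epsilon_1$ it is the displayed computation preceding the proposition, combining $\nu(1\wedge y)$ from Lemma~\ref{kkmodp-lem:4.6} with $\nu(\tau_0\wedge\bar\xi_1)$, $\nu(\tau_1\wedge1)$ and $\nu(\tau_0\wedge(\sigma\bar\tau_0)^{p-1})$ so that the non-primitive terms cancel. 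Uniqueness of the lift in each case follows because the primitives in the relevant degree are one-dimensional: in degrees $\le 2p-2$ the map $\pi_*$ to $H_*(V(1)\wedge\thh(\Z/p))$ is injective (it is injective on $E^\infty$-terms in that range, as noted before Lemma~\ref{kkmodp-lem:4.6}), and in degree $2p-1$ the ambiguity in $y$ is a multiple of $\bar\xi_1\bar\tau_0$, whose contribution $1\wedge\bar\xi_1\bar\tau_0$ is not primitive, pinning down $\bar\epsilon_1$ uniquely.

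The third and main step is the freeness statement. I would argue that the classes $\sigma\bar\xi_2=\lambda_2$ and $\sigma\bar\tau_2=\mu_2$ act on $V(1)_*\thh(\ell/p)$ through the same symbols by which they act on $E^\infty(\ell/p)$, and that $\lambda_1=1\wedge\sigma\bar\xi_1$ acts as zero: indeed multiplication by $\sigma\bar\xi_1$ already annihilates $H_*(\thh(\ell/p))$ by Lemma~\ref{kkmodp-lem:4.6}, so it annihilates the primitives a fortiori. Hence $V(1)_*\thh(\ell/p)$ is a module over $V(1)_*\thh(\ell)/(\lambda_1)=E(\lambda_2)\otimes P(\mu_2)$, and I claim it is free on the $2p$ listed generators. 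One way to see this cleanly is to note that $H_*(\thh(\ell/p))$ is free over $H_*(\thh(\ell))/(\sigma\bar\xi_1)$ on the $2p$ classes $1,\bar\tau_0,\sigma\bar\tau_0,\dots,(\sigma\bar\tau_0)^{p-1},y$ by Lemma~\ref{kkmodp-lem:4.6}, so that $H_*(V(1)\wedge\thh(\ell/p))$ is correspondingly free over $H_*(V(1))\otimes\bigl(H_*(\thh(\ell))/(\sigma\bar\xi_1)\bigr)$; taking $A_*$-comodule primitives and using that $E(\lambda_2)\otimes P(\mu_2)$ is the primitive subalgebra of the coefficient ring, a rank count (both sides are free $E(\lambda_2)\otimes P(\mu_2)$-modules and Poincar\'e series agree because the $2p$ generators occur in the same degrees on the nose) forces the module map to be an isomorphism. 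I expect the delicate point to be precisely this compatibility of ``taking primitives'' with the free module decomposition --- one must check that the primitives of the $H_*(\thh(\ell))/(\sigma\bar\xi_1)$-free module are themselves free over the primitive subring, which is where the explicit coaction formulas for the generators (and the observation that $\lambda_1$ acts trivially, so no $\sigma\bar\xi_1$-divisibility obstructions intervene) are essential; the Poincar\'e-series count then closes the argument. Finally, the statement about $\pi_*$ on generators is immediate from~\eqref{kkmodp-eq:5.3.a}, \eqref{kkmodp-eq:5.3.b}, \eqref{kkmodp-eq:5.3.c} and the description of $\pi_*\colon H_*(\thh(\ell/p))\to H_*(\thh(\Z/p))$ in Lemma~\ref{kkmodp-lem:4.6}.
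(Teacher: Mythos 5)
Your proposal is correct and follows essentially the same strategy as the paper: identify $V(1)_*$-homotopy with $A_*$-comodule primitives via Lemma~\ref{kkmodp-lem:4.45}, feed in the $H_*(\thh(\ell))/(\sigma\bar\xi_1)$-module structure on $H_*(\thh(\ell/p))$ from Lemma~\ref{kkmodp-lem:4.6}, and combine a dimension count with the observation that $\lambda_1 = 1\wedge\sigma\bar\xi_1$ vanishes in $H_*(V(1)\wedge\thh(\ell/p))$.

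A few small remarks. The uniqueness of the lifts is actually immediate from Lemma~\ref{kkmodp-lem:4.45}: the Hurewicz homomorphism is an isomorphism onto the primitives, so any class of $H_*(V(1)\wedge\thh(\ell/p))$ that is primitive has a unique preimage in $V(1)_*\thh(\ell/p)$; there is no need for the degreewise injectivity argument you invoke. Conversely, the freeness step is where you correctly sense a delicate point, and the paper handles it a bit more explicitly than you do: after the additive dimension count, it verifies directly that multiplication by $\mu_2^i$ and $\lambda_2\mu_2^i$ (for all $i\ge0$) is nonzero on each of the $2p$ generators, which is read off from the freeness statement of Lemma~\ref{kkmodp-lem:4.6} inside $H_*(V(1)\wedge\thh(\ell/p))$. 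That explicit non-triviality of the module action, together with the dimension count and the vanishing of $\lambda_1$, is what pins down the free module structure; your ``no $\sigma\bar\xi_1$-divisibility obstructions intervene'' is gesturing at the same thing but a bit more vaguely. Filling in that one verification, your argument matches the paper's.
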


\begin{proof}
Additively, this follows by another dimension count, and the
description of $\pi_*$ follows from the definition of
the classes in question. It remains to prove that the
action of $V(1)_* \thh(\ell)$ is as claimed.

The action of $\mu_2^i$ and
$\lambda_2\mu_2^i$ in $V(1)_*\thh(\ell)$ on the generators 
\begin{equation*}
1, \epsilon_0, \mu_0, \epsilon_0 \mu_0, \dots,
\mu_0^{p-1}, \bar\epsilon_1
\end{equation*}
of $V(1)_*\thh(\ell/p)$
is non-trivial for all $i\ge0$, since 
the corresponding statement holds for the images of these
classes in $H_*(V(1)\wedge \thh(\ell))$ and $H_*(V(1)\wedge
\thh(\ell/p))$. This follows from Lemma~\ref{kkmodp-lem:4.6} and the
definition these classes. It remains to show that
$\lambda_1$ acts trivially on $V(1)_* \thh(\ell/p)$. For
degree reasons, multiplication by $\lambda_1$ is zero on all
classes except possibly $\mu_2^i$ and $\lambda_2\mu_2^i$,
for $i\ge0$.
Because of the module structure, it suffices to shows that
$\lambda_1=\lambda_1\cdot1=0$ in  $V(1)_* \thh(\ell/p)$. 
This follows from the statement that
the image of $\lambda_1$ in $H_*(V(1)\wedge \thh(\ell/p))$ 
is equal to $1\wedge \sigma\bar\xi_1=0$, as implied by
Lemma~\ref{kkmodp-lem:4.6}. 
\end{proof}

\section{The $C_p$-Tate construction }
\label{kkmodp-sec:tate}
For the remainder of this paper, let $p$ be a prime with
$p\ge5$.
We briefly recall the terminology on equivariant stable homotopy theory
used in the sequel, and refer
to~\cite{kkmodp-GM95}, \cite{kkmodp-HM97}*{\S1}, \cite{kkmodp-HM03}*{\S4}
and~\cite{kkmodp-AR02}*{\S3} for more details.
Let $C_{p^n}$ denote the cyclic group of order $p^n$, considered
as a closed subgroup of the circle group $S^1$, and let
$G=S^1$ or $C_{p^n}$.  For each spectrum $X$
with $S^1$-action, let $X_{hG} = EG_+ \wedge_G X$ and $X^{hG} =
F(EG_+, X)^G$
denote its homotopy orbit and homotopy fixed point spectra, as usual.
We now write $X^{tG} = [\widetilde{EG} \wedge F(EG_+, X)]^G$ for the
$G$-Tate construction on~$X$, which was denoted $t_G(X)^G$ in \cite{kkmodp-GM95}
and $\hat\H(G, X)$ in \cites{kkmodp-HM97,kkmodp-HM03,kkmodp-AR02}. 

We denote by $F$ the Frobenius map 
$X^{C_{p^n}}\to X^{C_{p^{n-1}}}$ given by the
inclusion of fixed-point spectra, and by $V$ the
Verschiebung map $X^{C_{p^{n-1}}}\to X^{C_{p^n}}$ given by
transfer. We shall also consider the homotopy Frobenius, Tate
Frobenius and homotopy
Verschiebung maps 
$F^h \: X^{hS^1}\to X^{hC_{p^n}}$, $F^h \: X^{hC_{p^n}}\to
X^{hC_{p^{n-1}}}$, $F^t \: X^{tS^1}\to X^{tC_{p^n}}$  and $V^h \: X^{hC_{p^{n-1}}}\to
X^{hC_{p^n}}$. 

There are conditionally
convergent $G$-homotopy fixed point
and $G$-Tate spectral sequences in $V(1)$-homotopy for $X$, with
\begin{equation*}
E^2_{s,t}(G, X) = H_{gp}^{-s}(G; V(1)_t(X))
\Longrightarrow V(1)_{s+t}(X^{hG})
\end{equation*}
and
\begin {equation*}
\hat E^2_{s,t}(G, X) = \hat H_{gp}^{-s}(G; V(1)_t(X))
\Longrightarrow V(1)_{s+t}(X^{tG}) \,.
\end{equation*}
Here $H_{gp}^{*}(G; V(1)_*(X))$ denotes the group cohomology
of $G$ and $\hat H_{gp}^{*}(G; V(1)_*(X))$ the Tate
cohomology of $G$, with coefficients in $V(1)_*(X)$. 
Notice that in our case, with $X=\thh(B)$,
the action of $G$ on $V(1)_*(X)$ is trivial,
since it is the restriction of an $S^1$-action.
We write $H_{gp}^*(C_{p^n}; \F_p) = E(u_n) \otimes P(t)$ and $\hat
H_{gp}^*(C_{p^n}; \F_p) = E(u_n) \otimes P(t^{\pm1})$ with $u_n$ in degree
$1$ and $t$ in degree $2$, see for
example~\cite{kkmodp-Ben98}*{Prop.\,3.5.5}
and~\cite{kkmodp-HM03}*{Lem.~4.2.1}.  So $u_n$, $t$ and $x \in V(1)_t(X)$ have
bidegree $(-1,0)$, $(-2,0)$ and $(0,t)$ in either spectral sequence,
respectively. 
See \cite{kkmodp-HM03}*{\S4.3} for proofs of the multiplicative
properties of these spectral sequences.
Similarly, we write  $H_{gp}^*(S^1; \F_p) = P(t)$ and $\hat
H_{gp}^*(S^1; \F_p) = P(t^{\pm1})$. 
We have morphisms of spectral sequences induced by the
homotopy and Tate Frobenii, which on the $E^2$-terms map $t$ to $t$ and
$u_{n}$ to zero. 

We are principally interested in the case when $X = \thh(B)$, with the
$S^1$-action given by the cyclic
structure~\cite{kkmodp-Lo98}*{Def.~7.1.9}, \cite{kkmodp-HM03}*{\S1.2}.  It is a cyclotomic
spectrum, in the sense of \cite{kkmodp-HM97}*{\S1}, leading to the
commutative diagram
\begin{equation*}
\xymatrix{
\thh(B)_{hC_{p^n}} \ar[r]^N \ar@{=}[d] &
\thh(B)^{C_{p^n}} \ar[r]^R \ar[d]^{\Gamma_n} &
\thh(B)^{C_{p^{n-1}}} \ar[d]^{\hat\Gamma_n} \ar[r]&
\Sigma \thh(B)_{hC_{p^n}} \ar@{=}[d] \\
\thh(B)_{hC_{p^n}} \ar[r]^{N^h} &
\thh(B)^{hC_{p^n}} \ar[r]^{R^h} &
\thh(B)^{tC_{p^n}} \ar[r] &
\Sigma \thh(B)_{hC_{p^n}} 
}
\end{equation*}
of horizontal cofiber sequences.  We abbreviate $\hat E^2(G, \thh(B))$
to $\hat E^2(G, B)$, etc.  When $B$ is a commutative $S$-algebra,
this is a commutative algebra spectral sequence, and when~$B$ is an
associative $A$-algebra, with $A$ commutative, then $\hat E^*(G, B)$
is a module spectral sequence over $\hat E^*(G, A)$.  The map $R^h$
corresponds to the inclusion $E^2(G, B) \to \hat E^2(G, B)$
from the second quadrant to the upper half-plane, for connective $B$.

\begin{definition}
  We call a homomorphism of graded groups $k$-coconnected if
  it is an isomorphism in all dimensions greater than $k$
  and injective in dimension $k$.
\end{definition}

In this section we compute $V(1)_* \thh(\ell/p)^{tC_p}$ by means of the
$C_p$-Tate spectral sequence in $V(1)$-homotopy for $\thh(\ell/p)$.  In
Propositions~\ref{kkmodp-prop:6.8} and~\ref{kkmodp-prop:6.9} we show that the comparison 
map $\hat\Gamma_1 \: V(1)_* \thh(\ell/p) \to V(1)_* \thh(\ell/p)^{tC_p}$ is
$(2p-2)$-coconnected and can be identified with the algebraic
localization homomorphism that inverts $\mu_2$.

First we recall the structure of the $C_p$-Tate spectral sequence for
$\thh(\Z/p)$, with $V(0)$- and $V(1)$-coefficients.  We have $V(0)_*
\thh(\Z/p) = E(\epsilon_0) \otimes P(\mu_0)$, and (with an obvious
notation for the case of $V(0)$-homotopy) the $E^2$-terms are
\begin{equation*}
\begin{aligned}
\hat E^2(C_p, \Z/p; V(0)) &= E(u_1) \otimes P(t^{\pm1}) \otimes
E(\epsilon_0) \otimes P(\mu_0) \\
\hat E^2(C_p, \Z/p) &=  E(u_1) \otimes P(t^{\pm1}) \otimes
E(\epsilon_0, \epsilon_1) \otimes P(\mu_0) \,.
\end{aligned}
\end{equation*}
In each $G$-Tate spectral sequence we have a first differential
\begin{equation*}
d^2(x) = t \cdot \sigma x \,,
\end{equation*}
see e.g.~\cite{kkmodp-Rog98}*{\S3.3}.  We easily deduce $\sigma\epsilon_0 = \mu_0$
and $\sigma\epsilon_1 = \mu_0^p$ from~\eqref{kkmodp-eq:5.1}, so
\begin{equation*}
\begin{aligned}
\hat E^3(C_p, \Z/p; V(0)) &= E(u_1) \otimes P(t^{\pm1}) \\
\hat E^3(C_p, \Z/p) &=  E(u_1) \otimes P(t^{\pm1}) \otimes
E(\epsilon_0 \mu_0^{p-1} - \epsilon_1) \,.
\end{aligned}
\end{equation*}
Thus the $V(0)$-homotopy spectral sequence collapses at $\hat E^3 =
\hat E^\infty$.  By naturality with respect to the map $i_1 \: V(0) \to
V(1)$, all the classes on the horizontal axis of $\hat E^3(C_p, \Z/p)$
are infinite cycles, so also the latter spectral sequence collapses at
$\hat E^3(C_p, \Z/p)$.

We know from \cite{kkmodp-HM03}*{Cor.~4.4.2} that the comparison map 
\begin{equation*}
\hat\Gamma_1 \:
V(0)_* \thh(\Z/p) \to V(0)_* \thh(\Z/p)^{tC_p}
\end{equation*} 
takes $\epsilon_0^\delta
\mu_0^i$ to $(u_1 t^{-1})^\delta t^{-i}$, for all $0\le\delta\le1$,
$i\ge0$.  In particular, the integral map $\hat\Gamma_1 \: \pi_*
\thh(\Z/p) \to \pi_* \thh(\Z/p)^{tC_p}$ is
$(-2)$-cocon\-nec\-ted.
From this we can deduce the following behavior of the
comparison map $\hat\Gamma_1$ in $V(1)$-homotopy.

\begin{lemma}\label{kkmodp-lem:6.1}
The map
\begin{equation*}
\hat\Gamma_1 \: V(1)_* \thh(\Z/p) \to V(1)_* \thh(\Z/p)^{tC_p}
\end{equation*}
takes the classes $\epsilon_0^\delta \mu_0^i$ from $V(0)_* \thh(\Z/p)$,
for $0\le\delta\le1$ and $i\ge0$, to classes represented in $\hat
E^\infty(C_p, \Z/p)$ by $(u_1 t^{-1})^\delta t^{-i}$ (on the
horizontal axis).
Furthermore, it takes the class $\epsilon_0 \mu_0^{p-1} - \epsilon_1$
in degree $(2p-1)$ to a class represented by $\epsilon_0 \mu_0^{p-1} -
\epsilon_1$ (on the vertical axis).
\end{lemma}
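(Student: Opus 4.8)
The plan is to leverage the naturality of the comparison map $\hat\Gamma_1$ and the cyclotomic trace structure, together with the already-established $V(0)$-homotopy statement from \cite{kkmodp-HM03}*{Cor.~4.4.2}, via the cofibre sequence $\Sigma^{2p-2} V(0) \xr{v_1} V(0) \xr{i_1} V(1) \xr{j_1} \Sigma^{2p-1} V(0)$. First I would apply this cofibre sequence smashed with $\thh(\Z/p)$ and with $\thh(\Z/p)^{tC_p}$, getting a map of long exact sequences in which $\hat\Gamma_1$ acts vertically. In $V(0)$-homotopy we know $\hat\Gamma_1$ sends $\epsilon_0^\delta \mu_0^i$ to $(u_1 t^{-1})^\delta t^{-i}$. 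Since the map $i_1 \: V(0) \to V(1)$ induces, on $\thh(\Z/p)$, the evident inclusion of $E(\epsilon_0)\otimes P(\mu_0)$ into $E(\epsilon_0,\epsilon_1)\otimes P(\mu_0)$ (compatibly with the $\hat E^2$-terms $E(u_1)\otimes P(t^{\pm1})\otimes E(\epsilon_0)\otimes P(\mu_0)$ and its $V(1)$-analogue), the classes $\epsilon_0^\delta\mu_0^i$ viewed in $V(1)_*\thh(\Z/p)$ are the images under $(i_1)_*$ of the corresponding $V(0)$-classes; hence their images under $\hat\Gamma_1$ are the images under $(i_1)_*$ of $(u_1t^{-1})^\delta t^{-i}$, which are represented on the horizontal axis of $\hat E^\infty(C_p,\Z/p)$ by the same monomials. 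This gives the first assertion directly.

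For the second assertion, concerning $\epsilon_0 \mu_0^{p-1} - \epsilon_1$ in degree $2p-1$, I would use the Bockstein connecting map. By the spectral sequence computation already in the excerpt, $\hat E^3(C_p,\Z/p) = \hat E^\infty(C_p,\Z/p) = E(u_1)\otimes P(t^{\pm1})\otimes E(\epsilon_0\mu_0^{p-1}-\epsilon_1)$, so the class $\epsilon_0\mu_0^{p-1}-\epsilon_1$ on the vertical axis is a permanent cycle detecting an element of $V(1)_{2p-1}\thh(\Z/p)^{tC_p}$. On the source side, $V(1)_*\thh(\Z/p) = E(\epsilon_0,\epsilon_1)\otimes P(\mu_0)$ contains the element $\epsilon_0\mu_0^{p-1}-\epsilon_1$ in degree $2p-1$; since $\hat\Gamma_1$ is a map of graded rings (or at least a module map over the relevant structure) and respects the spectral sequence filtration, $\hat\Gamma_1(\epsilon_0\mu_0^{p-1}-\epsilon_1)$ must be detected by an element of $\hat E^\infty(C_p,\Z/p)$ in total degree $2p-1$. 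The candidates there, modulo higher filtration, are $\epsilon_0\mu_0^{p-1}-\epsilon_1$ (filtration $0$, vertical axis) and $u_1 t^{-p}$ (coming from $u_1(t^{-1})^p$, the horizontal-axis class corresponding to $\epsilon_0\mu_0^p$ in the $V(0)$-picture — but note this is already accounted for by $\hat\Gamma_1(\epsilon_0\mu_0^{p-1})$). I would then argue that the $\epsilon_1$-component forces the coefficient of $\epsilon_0\mu_0^{p-1}-\epsilon_1$ to be nonzero: apply the $v_1$-Bockstein $\beta_{1,1}$, or rather use that $j_1$ maps $\epsilon_1 \in V(1)_*$ to a class in $V(0)_*$ detected by $\mu_0^p$ up to the known relations (since in $V(0)$-homotopy there is no $\epsilon_1$), pinning down the coefficient.

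More carefully, the cleanest route for the second part is: the element $\epsilon_1 \in V(1)_{2p-1}\thh(\Z/p)$ is characterized as the class whose image under $j_1 \: V(1) \to \Sigma^{2p-1}V(0)$ (i.e.\ the connecting map of the cofibre sequence, composed appropriately) is the generator $1 \in V(0)_0\thh(\Z/p)$ — equivalently $\epsilon_1$ reduces the ``extra'' $v_1$-torsion generator. By naturality of this cofibre sequence under $\hat\Gamma_1$, the image $\hat\Gamma_1(\epsilon_1) \in V(1)_{2p-1}\thh(\Z/p)^{tC_p}$ must map under $j_1$ to $\hat\Gamma_1(1) = 1 \in V(0)_0\thh(\Z/p)^{tC_p}$. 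On the $\hat E^\infty$-level, the only class killed by $j_1$-then-reduction to $1$ is the one detected by $\epsilon_0\mu_0^{p-1}-\epsilon_1$ on the vertical axis (since in $\hat E^\infty(C_p,\Z/p;V(0))$ everything is concentrated on the horizontal axis, the vertical-axis exterior generator is precisely the image of the $V(1)$-exterior generator). Hence $\hat\Gamma_1(\epsilon_0\mu_0^{p-1}-\epsilon_1)$ — which equals $\hat\Gamma_1(\epsilon_0\mu_0^{p-1}) - \hat\Gamma_1(\epsilon_1)$, with the first term already on the horizontal axis by the first assertion — is detected by $-(\text{vertical generator})$, i.e.\ by $\epsilon_0\mu_0^{p-1}-\epsilon_1$ up to sign, as claimed.

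The main obstacle I anticipate is bookkeeping the filtration/representative issue precisely: one must check that no differential or hidden extension mixes the horizontal-axis class $(u_1t^{-1})\mu_0$-type contribution (coming from $\hat\Gamma_1(\epsilon_0\mu_0^{p-1})$) with the vertical-axis contribution, and that the ``$-\epsilon_1$'' part genuinely survives rather than being absorbed into higher filtration. This is controlled by the collapse of $\hat E^3(C_p,\Z/p)$ at $\hat E^\infty$ (already established in the excerpt) together with the fact that the horizontal and vertical axes live in different $\hat E^\infty$-filtrations, so there is no room for a hidden extension in this low total degree — making the argument ultimately a formal consequence of naturality and the $V(0)$-computation.
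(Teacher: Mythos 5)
The first part of your argument is correct and coincides with the paper's: by naturality of $\hat\Gamma_1$ along $i_1\colon V(0)\to V(1)$, the classes $\epsilon_0^\delta\mu_0^i$ coming from $V(0)$-homotopy inherit the representatives $(u_1t^{-1})^\delta t^{-i}$ from the Hesselholt--Madsen computation in $V(0)$-homotopy.

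Your argument for the second assertion, however, has a genuine gap. What the $v_1$-Bockstein $j_1$ computation establishes is that $j_1\bigl(\hat\Gamma_1(\epsilon_0\mu_0^{p-1}-\epsilon_1)\bigr)=-1\neq 0$. Write $V(1)_{2p-1}\thh(\Z/p)^{tC_p}=\F_p\{v,h\}$, where $v$ is detected by the vertical class $\epsilon_0\mu_0^{p-1}-\epsilon_1$ at filtration $s=0$ and $h=\hat\Gamma_1(\epsilon_0\mu_0^{p-1})$ is detected by $u_1t^{-p}$ at filtration $s=2p-1$. Since $h$ lies in the image of $i_1$ we have $j_1(h)=0$, so $\ker(j_1)=\F_p\{h\}$ and $j_1(v)=-1$. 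Your calculation therefore shows only that $\hat\Gamma_1(\epsilon_0\mu_0^{p-1}-\epsilon_1)=v+\beta h$ for some undetermined $\beta\in\F_p$. But the filtration on $V(1)_{2p-1}\thh(\Z/p)^{tC_p}$ is increasing, with $F_{-1}=0\subsetneq F_0=\F_p\{v\}\subsetneq F_{2p-1}=\F_p\{v,h\}$. If $\beta\neq 0$, then $v+\beta h\notin F_0$, and the element is detected at filtration $2p-1$ by $\beta u_1t^{-p}$ on the horizontal axis --- precisely the alternative the lemma must exclude. Your concluding paragraph asserts "no room for a hidden extension," but the possible nonvanishing of $\beta$ is exactly such a filtration ambiguity, and neither the collapse of the spectral sequence at $E^3$ nor the Bockstein naturality rules it out: the $j_1$-map kills $E^\infty_{2p-1,0}$ only at the associated graded level, and nothing prevents an element detected at $s=2p-1$ from having nonzero $j_1$ in the abutment.

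The paper closes this gap by a genuinely different device: since $\epsilon_0\mu_0^{p-1}-\epsilon_1=tr(\bar\epsilon_1)$ lies in the image of the B\"okstedt trace, the commutative diagram~\eqref{kkmodp-eq:6.2} exhibits $\hat\Gamma_1(\epsilon_0\mu_0^{p-1}-\epsilon_1)=R^h\bigl(\Gamma_1\,tr_1(\bar\epsilon_1)\bigr)$ as coming from $V(1)_*\thh(\Z/p)^{hC_p}$ via $R^h$. The image of $R^h_*$ lands in filtration $\le 0$, and the edge homomorphism (the Frobenius $F$) applied to $\Gamma_1\,tr_1(\bar\epsilon_1)$ recovers $tr(\bar\epsilon_1)\neq 0$, pinning the class on the vertical axis. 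It is this factorization through the homotopy fixed points, not the Bockstein alone, that forces $\beta=0$. To repair your proof you would need to add some version of this trace-through-$\Gamma_1$ step.
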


\begin{proof}
The classes $\epsilon_0^\delta \mu_0^i$ are in the image from
$V(0)$-homotopy, and we recalled above that they are detected by
$(u_1 t^{-1})^\delta t^{-i}$ in the $V(0)$-homotopy $C_p$-Tate spectral
sequence for $\thh(\Z/p)$.  By naturality along $i_1 \: V(0) \to V(1)$,
they are detected by the same (nonzero) classes in the $V(1)$-homotopy
spectral sequence $\hat E^\infty(C_p, \Z/p)$.

To find the representative for $\hat\Gamma_1(\epsilon_0 \mu_0^{p-1} -
\epsilon_1)$ in degree~$(2p-1)$, we appeal to the cyclotomic trace map
from algebraic $K$-theory, or more precisely, to the commutative
diagram
\begin{equation}
\xymatrix{
& K(B) \ar[dr]^{tr} \ar[d]^{tr_1} \ar[dl]_{tr} \\
\thh(B) & \thh(B)^{C_p} \ar[r]^R \ar[d]^{\Gamma_1} \ar[l]_F & \thh(B)
  \ar[d]^{\hat\Gamma_1} \\
& \thh(B)^{hC_p} \ar[r]^{R^h} \ar[ul] & \thh(B)^{tC_p} \,.
}
\label{kkmodp-eq:6.2}
\end{equation}
The B{\"o}kstedt trace map $tr \: K(B) \to \thh(B)$ admits a preferred
lift $tr_n$ through each fixed point spectrum $\thh(B)^{C_{p^n}}$,
which homotopy equalizes the iterated restriction and Frobenius maps
$R^n$ and $F^n$ to $\thh(B)$, see \cite{kkmodp-Du04}*{\S3}.
In particular, the $\sigma$-operator on $V(1)_*\thh(B)$ is zero on    
classes in the image of $tr$.

In the case $B = H\Z/p$ we know that $K(\Z/p)_p \simeq H\Z_p$, so
$V(1)_* K(\Z/p) = E(\bar\epsilon_1)$, where the $v_1$-Bockstein
of $\bar\epsilon_1$ is $-1$.  The B{\"o}kstedt trace image
$tr(\bar\epsilon_1) \in V(1)_* \thh(\Z/p)$ lies in $\F_p\{\epsilon_1,
\epsilon_0 \mu_0^{p-1}\}$, has $v_1$-Bockstein $tr(-1) = -1$ and suspends
by $\sigma$ to $0$.  Hence
\begin{equation*}
tr(\bar\epsilon_1) = \epsilon_0 \mu_0^{p-1} - \epsilon_1 \,.
\end{equation*}
As we recalled above, the map $\hat\Gamma_1 \: \pi_* \thh(\Z/p) \to
\pi_* \thh(\Z/p)^{tC_p}$ is $(-2)$-cocon\-nec\-ted, so the corresponding map
in $V(1)$-homotopy is at least $(2p-2)$-coconnected.  Thus it takes
$\epsilon_0 \mu_0^{p-1} - \epsilon_1$ to a nonzero class in $V(1)_*
\thh(\Z/p)^{tC_p}$, represented somewhere in total degree~$(2p-1)$ of $\hat
E^\infty(C_p, \Z/p)$, in the lower right hand corner of the diagram.

Going down the middle part of the diagram, we reach a class $(\Gamma_1
\circ tr_1)(\bar\epsilon_1)$, represented in total degree~$(2p-1)$
in the left half-plane $C_p$-homotopy fixed point spectral sequence
$E^\infty(C_p, \Z/p)$.  Its image under the edge homomorphism
to $V(1)_* \thh(\Z/p)$ equals $(F \circ tr_1)(\bar\epsilon_1) =
tr(\bar\epsilon_1)$, hence $(\Gamma_1 \circ tr_1)(\bar\epsilon_1)$
is represented by $\epsilon_0 \mu_0^{p-1} - \epsilon_1$ in
$E^\infty_{0,2p-1}(C_p, \Z/p)$.  Its image under $R^h$ in the $C_p$-Tate
spectral sequence is the generator of $\hat E^\infty_{0,2p-1}(C_p, \Z/p)
= \F_p\{\epsilon_0 \mu_0^{p-1} - \epsilon_1\}$, hence that generator is
the $E^\infty$-representative of $\hat\Gamma_1(\epsilon_0 \mu_0^{p-1}
- \epsilon_1)$.
\end{proof}

The $(2p-2)$-connected map $\pi \: \ell/p \to H\Z/p$ induces
a $(2p-1)$-connected map $V(1)_* K(\ell/p) \to V(1)_* K(\Z/p) =
E(\bar\epsilon_1)$, by \cite{kkmodp-BM94}*{Prop.~10.9}.  
We can lift the algebraic $K$-theory class $\bar\epsilon_1$ to $\ell/p$.
This lift is not unique, but we fix one choice.

\begin{definition}
We call
\begin{equation*}
\bar\epsilon_1^K \in V(1)_{2p-1} K(\ell/p)
\end{equation*}
a chosen class that maps to the generator $\bar\epsilon_1$ in $V(1)_{2p-1} K(\Z/p)
\cong \Z/p$.
\end{definition}

\begin{lemma}\label{kkmodp-lem:6.4}
The B{\"o}kstedt trace $tr \: V(1)_* K(\ell/p) \to V(1)_* \thh(\ell/p)$
takes $\bar\epsilon_1^K$ to $\bar\epsilon_1$.
\end{lemma}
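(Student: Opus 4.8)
The plan is to compare the B\"okstedt trace maps for $\ell/p$ and $H\Z/p$ using the naturality square associated to the map $\pi\colon\ell/p\to H\Z/p$, together with the structural results already established. The point is that $\bar\epsilon_1\in V(1)_{2p-1}\thh(\ell/p)$ is the unique $A_*$-comodule primitive lift (in the relevant degree) of its image, so I only need to pin down $tr(\bar\epsilon_1^K)$ in low enough degrees to apply the comparison.

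First I would consider the commutative square
\begin{equation*}
\xymatrix{
V(1)_* K(\ell/p) \ar[r]^{tr} \ar[d]^{\pi_*} & V(1)_* \thh(\ell/p) \ar[d]^{\pi_*} \\
V(1)_* K(\Z/p) \ar[r]^{tr} & V(1)_* \thh(\Z/p)\,.
}
\end{equation*}
By the definition of $\bar\epsilon_1^K$, its image $\pi_*(\bar\epsilon_1^K)$ is the generator $\bar\epsilon_1\in V(1)_{2p-1}K(\Z/p)$, and by the computation $tr(\bar\epsilon_1)=\epsilon_0\mu_0^{p-1}-\epsilon_1$ recalled in the proof of Lemma~\ref{kkmodp-lem:6.1}. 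Hence $\pi_*(tr(\bar\epsilon_1^K))=\epsilon_0\mu_0^{p-1}-\epsilon_1$ in $V(1)_{2p-1}\thh(\Z/p)$. Next I would invoke Proposition~\ref{kkmodp-prop:5.4}: in degree $(2p-1)$ the map $\pi_*\colon V(1)_{2p-1}\thh(\ell/p)\to V(1)_{2p-1}\thh(\Z/p)$ sends $\bar\epsilon_1$ to $\epsilon_0\mu_0^{p-1}-\epsilon_1$, and from the module description $V(1)_*\thh(\ell/p)=V(1)_*\thh(\ell)/(\lambda_1)\otimes\F_p\{1,\epsilon_0,\mu_0,\dots,\mu_0^{p-1},\bar\epsilon_1\}$ one checks that $V(1)_{2p-1}\thh(\ell/p)$ is one-dimensional, spanned by $\bar\epsilon_1$, on which $\pi_*$ is therefore injective. (Indeed nothing in $V(1)_*\thh(\ell)/(\lambda_1)=E(\lambda_2)\otimes P(\mu_2)$ has positive degree below $2p^2$, so the only generator contributing in degree $2p-1$ is $\bar\epsilon_1$ itself.)

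Combining these two facts: $tr(\bar\epsilon_1^K)$ lies in the one-dimensional space $\F_p\{\bar\epsilon_1\}$ and maps under the injective $\pi_*$ to $\epsilon_0\mu_0^{p-1}-\epsilon_1=\pi_*(\bar\epsilon_1)$, so $tr(\bar\epsilon_1^K)=\bar\epsilon_1$, as claimed. The main obstacle here is really bookkeeping: one must be sure that $V(1)_{2p-1}\thh(\ell/p)$ contains no extra classes — in particular that the factor $H_*(\ell)$ hidden inside $V(1)_*\thh(\ell)$ does not contribute, which is why working with $V(1)$-coefficients (killing $p$ and $v_1$, hence $|v_1|=2p-2$) rather than mod $p$ homology is essential. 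Given Proposition~\ref{kkmodp-prop:5.4} this is immediate, so the lemma follows formally from naturality of $tr$ and the established computation of $tr(\bar\epsilon_1)$ over $H\Z/p$.
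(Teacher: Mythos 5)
Your proof is correct and follows essentially the same route as the paper's: naturality of the trace along $\pi\colon\ell/p\to H\Z/p$, the identification $tr(\bar\epsilon_1)=\epsilon_0\mu_0^{p-1}-\epsilon_1$ from the proof of Lemma~\ref{kkmodp-lem:6.1}, and the characterization of $\bar\epsilon_1\in V(1)_{2p-1}\thh(\ell/p)$ via Proposition~\ref{kkmodp-prop:5.4}. The only difference is that you spell out the one-dimensionality of $V(1)_{2p-1}\thh(\ell/p)$ where the paper simply invokes the proposition; that elaboration is accurate and harmless.
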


\begin{proof}
In the commutative square
\begin{equation*}
\xymatrix{
V(1)_* K(\ell/p) \ar[d]^{\pi_*} \ar[r]^-{tr} & V(1)_* \thh(\ell/p)
  \ar[d]^{\pi_*} \\
V(1)_* K(\Z/p) \ar[r]^-{tr} & V(1)_* \thh(\Z/p)
}
\end{equation*}
the trace image $tr(\bar\epsilon_1^K)$ in $V(1)_* \thh(\ell/p)$ must map
under $\pi_*$ to $tr(\bar\epsilon_1) = \epsilon_0 \mu_0^{p-1} - \epsilon_1$
in $V(1)_* \thh(\Z/p)$, which by Proposition~\ref{kkmodp-prop:5.4} characterizes it as
being equal to the class~$\bar\epsilon_1$.
Hence $tr(\bar\epsilon_1^K) = \bar\epsilon_1$.
\end{proof}

Next we turn to the $C_p$-Tate spectral sequence $\hat E^*(C_p,
\ell/p)$ in $V(1)$-homotopy for $\thh(\ell/p)$.  
Its $E^2$-term is
\begin{equation*}
\hat E^2(C_p, \ell/p) = E(u_1) \otimes P(t^{\pm1}) \otimes
\F_p\{1, \epsilon_0, \mu_0, \epsilon_0 \mu_0, \dots, \mu_0^{p-1},
  \bar\epsilon_1\} \otimes E(\lambda_2) \otimes P(\mu_2) \,.
\end{equation*}
We have $d^2(x) = t \cdot \sigma x$, where
\begin{equation*}
\sigma(\epsilon_0^\delta \mu_0^{i-1}) = 
\begin{cases}
\mu_0^i & \text{for $\delta=1$, $0<i<p$,} \\
0 & \text{otherwise}
\end{cases}
\end{equation*}
is readily deduced from~\eqref{kkmodp-eq:5.1}, and $\sigma(\bar\epsilon_1) = 0$ since
$\bar\epsilon_1$ is in the image of $tr$.  Thus
\begin{equation}
\hat E^3(C_p, \ell/p) = E(u_1) \otimes P(t^{\pm1}) \otimes
E(\bar\epsilon_1) \otimes E(\lambda_2) \otimes P(t\mu_2) \,.
\label{kkmodp-eq:6.5}
\end{equation}
We prefer to use $t\mu_2$ rather than $\mu_2$ as a generator, since it
represents multiplication by $v_2$ (up to a unit factor in
$\F_p$) in all module spectral sequences
over $E^*(S^1, \ell)$, by \cite{kkmodp-AR02}*{Prop.~4.8}.

To proceed, we shall use that $\hat E^*(C_p ,\ell/p)$ is a module
over the spectral sequence for $\thh(\ell)$.  We therefore recall the
structure of the latter spectral sequence, from
\cite{kkmodp-AR02}*{Th.~5.5}.
It begins
\begin{equation*}
\hat E^2(C_p, \ell) =
E(u_1) \otimes P(t^{\pm1}) \otimes E(\lambda_1, \lambda_2)
\otimes P(\mu_2) \,.
\end{equation*}
The classes $\lambda_1$, $\lambda_2$ and $t\mu_2$ are infinite
cycles, and the differentials
\begin{equation*}
\begin{aligned}
d^{2p}(t^{1-p}) &\doteq t\lambda_1 \\
d^{2p^2}(t^{p-p^2}) &\doteq t^p\lambda_2 \\
d^{2p^2+1}(u_1t^{-p^2}) &\doteq t\mu_2
\end{aligned}
\end{equation*}
leave the terms
\begin{equation*}
\begin{aligned}
\hat E^{2p+1}(C_p, \ell) &= E(u_1, \lambda_1, \lambda_2)
	\otimes P(t^{\pm p}, t\mu_2) \\
\hat E^{2p^2+1}(C_p, \ell) &= E(u_1, \lambda_1, \lambda_2)
	\otimes P(t^{\pm p^2}, t\mu_2) \\
\hat E^{2p^2+2}(C_p, \ell) &= E(\lambda_1, \lambda_2)
	\otimes P(t^{\pm p^2})
\end{aligned}
\end{equation*}
with $\hat E^{2p^2+2} = \hat E^\infty$, converging to $V(1)_*
\thh(\ell)^{tC_p}$.  The comparison map $\hat\Gamma_1$ takes $\lambda_1$,
$\lambda_2$ and $\mu_2$ to $\lambda_1$, $\lambda_2$ and
$t^{-p^2}$ (up to a unit factor in $\F_p$),
respectively, inducing the algebraic localization map and identification
\begin{equation*}
\hat\Gamma_1 \: V(1)_* \thh(\ell) \to
V(1)_* \thh(\ell) [\mu_2^{-1}] \cong V(1)_* \thh(\ell)^{tC_p} \,.
\end{equation*}

\begin{lemma}\label{kkmodp-lem:6.6}
In $\hat E^*(C_p, \ell/p)$, the class
$u_1 t^{-p}$ supports the nonzero differential
\begin{equation*}
d^{2p^2}(u_1 t^{-p}) \doteq u_1 t^{p^2-p} \lambda_2 \,,
\end{equation*}
and does not survive to the $E^\infty$-term.
\end{lemma}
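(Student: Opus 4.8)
The plan is to obtain this differential by transporting the differential $d^{2p^2}(t^{p-p^2}) \doteq t^p\lambda_2$ of $\hat E^*(C_p,\ell)$, recalled from \cite{kkmodp-AR02}*{Th.~5.5}, along the module structure of $\hat E^*(C_p,\ell/p)$ over it. First I would reindex that differential inside $\hat E^*(C_p,\ell)$ itself. On the page $\hat E^{2p^2}(C_p,\ell) = \hat E^{2p+1}(C_p,\ell)$ the class $t^{p-p^2}$ is the $(1-p)$-st power of the invertible class $t^p$, so the Leibniz rule (using that $1-p$ is a unit of $\F_p$) forces $d^{2p^2}(t^p) \doteq t^{p^2+p}\lambda_2$, and hence $d^{2p^2}(t^{-p}) \doteq t^{p^2-p}\lambda_2$ in $\hat E^*(C_p,\ell)$. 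Since $u_1$ survives to $\hat E^{2p^2+1}(C_p,\ell)$ it is a $d^{2p^2}$-cycle there, so the Leibniz rule gives $d^{2p^2}(u_1 t^{-p}) \doteq u_1 t^{p^2-p}\lambda_2$ already in $\hat E^*(C_p,\ell)$.

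Next I would push this forward along the morphism of spectral sequences $\hat E^*(C_p,\ell) \to \hat E^*(C_p,\ell/p)$, $x\mapsto x\cdot 1$, coming from the module structure; it sends $u_1$, $t$ and $\lambda_2$ to the classes of the same names. To conclude it then suffices to show that $\hat E^*(C_p,\ell/p)$ carries no differential on the pages $3\le r\le 2p^2-1$, so that $\hat E^{2p^2}(C_p,\ell/p)$ equals the algebra in~\eqref{kkmodp-eq:6.5}, in which $u_1 t^{-p}$ and $u_1 t^{p^2-p}\lambda_2$ are nonzero monomials. By~\eqref{kkmodp-eq:6.5} and Proposition~\ref{kkmodp-prop:5.4}, and using $\lambda_1=0$, every class of $\hat E^3(C_p,\ell/p)$ is an $\hat E^*(C_p,\ell)$-linear combination of $1$ and $\bar\epsilon_1$, so a differential $d^r$ applied to it is $d^r(x)\cdot 1$, respectively $d^r(x)\cdot\bar\epsilon_1\pm x\cdot d^r(\bar\epsilon_1)$, for a suitable $x$ in $\hat E^*(C_p,\ell)$. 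In the range $3\le r\le 2p^2-1$ the only nonzero differential of $\hat E^*(C_p,\ell)$ is $d^{2p}$, and from $d^{2p}(t^{1-p})\doteq t\lambda_1$ (which forces $d^{2p}(t)\doteq t^{p+1}\lambda_1$) together with the vanishing of $d^{2p}$ on $u_1,\lambda_1,\lambda_2,t\mu_2$ one sees, again by the Leibniz rule, that every $d^{2p}$-value lies in the ideal generated by $\lambda_1$; since $\lambda_1=0$ in $\hat E^*(C_p,\ell/p)$ the contributions $d^r(x)\cdot 1$ and $d^r(x)\cdot\bar\epsilon_1$ vanish, and it remains only to see that $d^r(\bar\epsilon_1)=0$ for $3\le r\le 2p^2-1$.

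This last point is the crux, and a bidegree count does not suffice for it: for $r=2p^2-2p+1$ the target group of $d^r(\bar\epsilon_1)$ already contains $u_1 t^{p^2-p}\lambda_2$. Instead I would invoke the cyclotomic trace. By Lemma~\ref{kkmodp-lem:6.4} we have $\bar\epsilon_1 = tr(\bar\epsilon_1^K)$ with $\bar\epsilon_1^K\in V(1)_* K(\ell/p)$, and composing the preferred lift $tr_1$ with $\Gamma_1$ of diagram~\eqref{kkmodp-eq:6.2} carries $\bar\epsilon_1^K$ to a class of $V(1)_*\thh(\ell/p)^{hC_p}$ which maps under $F^h$ to $F(tr_1(\bar\epsilon_1^K)) = tr(\bar\epsilon_1^K) = \bar\epsilon_1$ in $V(1)_*\thh(\ell/p)$; hence that class is detected in filtration zero of the $C_p$-homotopy fixed point spectral sequence by $\bar\epsilon_1$, forcing $\bar\epsilon_1$ to be an infinite cycle there, and therefore, via $R^h$ (the inclusion of the second quadrant, an isomorphism onto the part with $s\le 0$), an infinite cycle in $\hat E^*(C_p,\ell/p)$ as well. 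With $\hat E^{2p^2}(C_p,\ell/p)$ thus identified with~\eqref{kkmodp-eq:6.5}, the transported differential reads $d^{2p^2}(u_1 t^{-p}) \doteq u_1 t^{p^2-p}\lambda_2 \ne 0$, so $u_1 t^{-p}$ does not survive to $\hat E^{2p^2+1}$, a fortiori not to $\hat E^\infty$. The main obstacle is exactly this third step: ruling out an earlier differential on $\bar\epsilon_1$, where the trace input is genuinely needed since degree considerations alone leave room for $d^{2p^2-2p+1}(\bar\epsilon_1)\doteq u_1 t^{p^2-p}\lambda_2$.
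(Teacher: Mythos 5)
Your first two paragraphs (deriving $d^{2p^2}(u_1 t^{-p}) \doteq u_1 t^{p^2-p}\lambda_2$ in $\hat E^*(C_p,\ell)$ via the Leibniz rule and pushing forward along the module map) are fine, and your identification of the crux, ruling out a $d^{2p^2-2p+1}$ on $\bar\epsilon_1$, is correct: that is exactly the non-trivial point. The trace argument you give to show that $\bar\epsilon_1$ is a permanent cycle is also sound, and it is a genuinely different tool from the one the paper uses (the paper instead invokes naturality with respect to the Tate Frobenius $F^t$, which kills odd-length differentials emanating from even columns). So your crux step could replace the paper's.

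However, the overall packaging is wrong, and in a way that actually contradicts the paper. Your claim that ``$\hat E^*(C_p,\ell/p)$ carries no differential on the pages $3 \le r \le 2p^2-1$'' is false: Proposition~\ref{kkmodp-prop:6.8} exhibits the differentials $d^{2p^2-2p+2}(t^{p-p^2-i}\bar\epsilon_1) \doteq t^{1-i}\mu_2$ for $0<i<p$, and $2p^2-2p+2 \le 2p^2-1$ for all $p\ge 2$. The flaw in your Leibniz-rule argument is that the decomposition of $\hat E^r(C_p,\ell/p)$ into $\hat E^r(C_p,\ell)$-linear combinations of $1$ and $\bar\epsilon_1$ only holds up to page $r=2p$. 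After $d^{2p}$ the base spectral sequence has shrunk to $\hat E^{2p+1}(C_p,\ell)=E(u_1,\lambda_1,\lambda_2)\otimes P(t^{\pm p},t\mu_2)$, so on pages $r\ge 2p+1$ the classes $t^{i}$ and $t^{i}\bar\epsilon_1$ with $0<i<p$ are additional module generators that your Leibniz computation never controls; it is precisely on such classes that the $d^{2p^2-2p+2}$-differentials live. The fix is not hard: one only needs the weaker statement that the specific target $u_1 t^{p^2-p}\lambda_2$ survives to $\hat E^{2p^2}$, and a bidegree check (vertical degree $2p-1$ is the only feasible one) shows that, among the classes of \eqref{kkmodp-eq:6.5}, $\bar\epsilon_1$ itself in bidegree $(0,2p-1)$ is the unique candidate to hit it. Combined with your trace argument this closes the gap, but the claim you actually wrote down, that $\hat E^3 = \hat E^{2p^2}$, is not correct and the justification you give for it does not hold.
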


\begin{proof}
In $\hat E^*(C_p, \ell)$, there is such a differential.
By naturality along $i \: \ell \to \ell/p$, it follows that there is
also such a differential in $\hat E^*(C_p, \ell/p)$.  It remains to
argue that the target class is nonzero at the $E^{2p^2}$-term.
Considering the $E^3$-term in~\eqref{kkmodp-eq:6.5},
the only possible source of a previous differential hitting $u_1
t^{p^2-p} \lambda_2$ is $\bar\epsilon_1$, supporting a
$d^{2p^2-2p+1}$-differential.  But $\bar\epsilon_1$ is in
an even column and $u_1 t^{p^2-p} \lambda_2$ is in an odd column.  By
naturality with respect to the Tate Frobenius map
$F^t \: \thh(\ell/p)^{tS^1}\to \thh(\ell/p)^{tC_p}$, 
any such differential from an even to an odd column must be zero.
Indeed, the $S^1$-Tate spectral sequence
has $E^2$-term given by $P(t^{\pm1})\otimes
V(1)_*\thh(\ell/p)$, and $F^t$ induces the
injective homomorphism that takes $\hat E^2(S^1, \ell/p)$ isomorphically
to the even columns
of $\hat E^2(C_p, \ell/p)$. Since $\hat E^*(S^1, \ell/p)$ is
concentrated in even columns, all differentials of odd
length are zero. By naturality,
classes of $\hat E^r(C_p, \ell/p)$ that lie in the
image of  $\hat E^r(F^t)$ cannot support a differential of
odd length; compare with~\cite{kkmodp-AR02}*{Lemma~5.2}.
In the present situation, the $d^2$-differential of $\hat E^*(C_p, \ell/p)$
leading to~\eqref{kkmodp-eq:6.5} is also non-zero in  
$\hat E^*(S^1, \ell/p)$, so that we have
\begin{equation*}
\hat  E^3(S^1, \ell/p)=
P(t^{\pm1}) \otimes E(\bar\epsilon_1) \otimes E(\lambda_2)
\otimes P(t\mu_2)\,.
\end{equation*}
By inspection, if the class $\bar\epsilon_1\in\hat E^2(C_p,
\ell/p)$ survives to $\hat E^{2p^2-2p+1}(C_p,\ell/p)$, then
it will lie in the image of $\hat E^{2p^2-2p+1}(F^t)$. 
\end{proof}

To determine the map $\hat\Gamma_1$ we use naturality with
respect to the map $\pi \: \ell/p \to H\Z/p$. 

\begin{lemma}\label{kkmodp-lem:6.7}
The classes $1, \epsilon_0, \mu_0, \epsilon_0 \mu_0, \dots, \mu_0^{p-1}$
and $\bar\epsilon_1$ in $V(1)_* \thh(\ell/p)$ map under~$\hat\Gamma_1$
to classes in $V(1)_* \thh(\ell/p)^{tC_p}$ that are represented in $\hat
E^\infty(C_p, \ell/p)$ by the permanent cycles $(u_1 t^{-1})^\delta
t^{-i}$ (on the horizontal axis) in degrees $\le (2p-2)$, and by the
permanent cycle $\bar\epsilon_1$ (on the vertical axis) in degree
$(2p-1)$.
\end{lemma}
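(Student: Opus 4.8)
The plan is to deduce the statement by naturality along the ring map $\pi\:\ell/p\to H\Z/p$ from the corresponding assertion for $\thh(\Z/p)$, namely Lemma~\ref{kkmodp-lem:6.1}. Comparing the $E^3$-term \eqref{kkmodp-eq:6.5} with the $E^3$-term of the $C_p$-Tate spectral sequence for $\thh(\Z/p)$, the induced morphism $\pi_*\:\hat E^*(C_p,\ell/p)\to\hat E^*(C_p,\Z/p)$ sends $u_1\mapsto u_1$, $t\mapsto t$ and $\bar\epsilon_1\mapsto\epsilon_0\mu_0^{p-1}-\epsilon_1$, and annihilates $\lambda_2$ and $t\mu_2$. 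In particular it restricts to an isomorphism from the subalgebra $E(u_1)\otimes P(t^{\pm1})\otimes E(\bar\epsilon_1)$ of $\hat E^3(C_p,\ell/p)$ onto $\hat E^3(C_p,\Z/p)$, so $\pi_*$ is injective in every bidegree supported by a monomial free of $\lambda_2$ and $t\mu_2$.

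Next I would locate the relevant classes on the two axes. A short degree count shows that in each total degree $n\le 2p-2$ the only monomial of $\hat E^3(C_p,\ell/p)$ lying in a non-negative column is the horizontal-axis class $(u_1t^{-1})^\delta t^{-i}$ with $\delta+2i=n$; all other monomials of that total degree lie in strictly negative columns and are divisible by $\lambda_2$ or $t\mu_2$. The same holds in total degree $2p-1$ together with the vertical-axis class $\bar\epsilon_1$, the remaining non-negative-column class $u_1t^{-p}$ being killed by the differential of Lemma~\ref{kkmodp-lem:6.6}. Since the images in $\hat E^3(C_p,\Z/p)$ of $(u_1t^{-1})^\delta t^{-i}$ and of $\bar\epsilon_1$ are infinite cycles there, and $\pi_*$ is injective on their bidegrees, a degree-and-column count rules out differentials on these classes in the range at issue; they are therefore permanent cycles in $\hat E^*(C_p,\ell/p)$. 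For $\bar\epsilon_1$ one may alternatively invoke $\sigma\bar\epsilon_1=0$, which holds by Lemma~\ref{kkmodp-lem:6.4} since $\bar\epsilon_1$ is in the image of the B{\"o}kstedt trace.

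To pin down the $E^\infty$-representatives, apply $\pi_*$ to $\hat\Gamma_1(\epsilon_0^\delta\mu_0^i)$. By Proposition~\ref{kkmodp-prop:5.4}, $\pi_*(\epsilon_0^\delta\mu_0^i)=\epsilon_0^\delta\mu_0^i$, so by Lemma~\ref{kkmodp-lem:6.1} the image in $V(1)_*\thh(\Z/p)^{tC_p}$ is the nonzero class detected by $(u_1t^{-1})^\delta t^{-i}$ on the horizontal axis. If $\hat\Gamma_1(\epsilon_0^\delta\mu_0^i)$ were detected in $\hat E^\infty(C_p,\ell/p)$ only by monomials divisible by $\lambda_2$ or $t\mu_2$, then $\pi_*$ would annihilate all of its representatives and the image would vanish, which is absurd; hence its representative has a nonzero component equal to the unique non-negative-column class $(u_1t^{-1})^\delta t^{-i}$. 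To upgrade this---that is, to exclude a lower-filtration summand divisible by $\lambda_2$ or $t\mu_2$---I would use that $\hat\Gamma_1\:V(1)_*\thh(\ell/p)\to V(1)_*\thh(\ell/p)^{tC_p}$ is a map of modules over $\hat\Gamma_1\:V(1)_*\thh(\ell)\to V(1)_*\thh(\ell)[\mu_2^{-1}]$, which is the localization inverting $\mu_2$ (recalled before Lemma~\ref{kkmodp-lem:6.6}), together with the fact (Proposition~\ref{kkmodp-prop:5.4}) that $\epsilon_0^\delta\mu_0^i$ is a generator of the free $E(\lambda_2)\otimes P(\mu_2)$-module $V(1)_*\thh(\ell/p)$ not divisible by $\mu_2$; this prevents its image from acquiring such a summand in degrees $\le 2p-1$. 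For $\bar\epsilon_1$ I would instead copy the cyclotomic-trace argument of Lemma~\ref{kkmodp-lem:6.1}: by Lemma~\ref{kkmodp-lem:6.4} and the fact that $tr_1$ homotopy equalizes $R$ and $F$, one has $\bar\epsilon_1=R\bigl(tr_1(\bar\epsilon_1^K)\bigr)$, hence $\hat\Gamma_1(\bar\epsilon_1)=R^h\bigl(\Gamma_1(tr_1(\bar\epsilon_1^K))\bigr)$ lies in the image of $R^h$ and so is detected in a non-positive column; with $\pi_*$-naturality and the injectivity of $\pi_*$ on $V(1)_{2p-1}\thh(\ell/p)=\F_p\{\bar\epsilon_1\}$, this forces its representative to be $\bar\epsilon_1$ on the vertical axis.

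The hard part will be the last step. Naturality along $\pi$ controls these images only modulo the ideal generated by $\lambda_2$ and $t\mu_2$, since $\pi_*$ kills those classes; ruling out a lower-filtration correction term needs the finer input of the $\hat E^*(C_p,\ell)$-module structure---via the already-known comparison map for $\thh(\ell)$ and the freeness of Proposition~\ref{kkmodp-prop:5.4}---and, for the vertical-axis class, the factorization of $\hat\Gamma_1$ through $R^h$ on the image of the trace map.
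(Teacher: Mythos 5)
Your core strategy---naturality of $\hat\Gamma_1$ along $\pi\colon\ell/p\to H\Z/p$, then Proposition~\ref{kkmodp-prop:5.4} and Lemma~\ref{kkmodp-lem:6.1} to identify the targets, then Lemma~\ref{kkmodp-lem:6.6} (or, in your variant, the $R^h$-factorization) for the class in degree $2p-1$---matches the paper's proof. Your alternative treatment of $\bar\epsilon_1$, namely using Lemma~\ref{kkmodp-lem:6.4} and the trace to show that $\hat\Gamma_1(\bar\epsilon_1)$ factors through $R^h$ and hence is detected in column $s\le 0$, is legitimate and amounts to repeating the argument inside Lemma~\ref{kkmodp-lem:6.1} for $\ell/p$ rather than only citing its output together with Lemma~\ref{kkmodp-lem:6.6}.

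Your final paragraph, however, invents a ``hard part'' that is not there. The lemma asserts only that each $\hat\Gamma_1$-image is \emph{detected} at a specific filtration by a specific element of $\hat E^\infty(C_p,\ell/p)$. A nonzero class $x$ in $V(1)_n\thh(\ell/p)^{tC_p}$ has a single well-defined detecting filtration $s_1$ and a single representative in $\hat E^\infty_{s_1,n-s_1}$; there is no notion of a ``lower-filtration summand'' that a further argument would have to exclude. Since $\hat E^2_{s,t}(C_p,\ell/p)=0$ for $t<0$, in total degree $n$ the filtration is bounded above at $s=n$, with top quotient $\hat E^\infty_{n,0}$. By Lemma~\ref{kkmodp-lem:6.1} and Proposition~\ref{kkmodp-prop:5.4}, $\pi_*\hat\Gamma_1(\epsilon_0^\delta\mu_0^i)$ is detected on the horizontal axis, i.e.\ in this top quotient; as $\pi_*$ preserves the filtration and cannot raise it, $\hat\Gamma_1(\epsilon_0^\delta\mu_0^i)$ is forced to be detected there too, and the one-dimensionality of $\hat E^\infty_{n,0}\subset\hat E^3_{n,0}=\F_p\{(u_1t^{-1})^\delta t^{-i}\}$ pins the representative with nothing left to rule out. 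This misreading is also behind two slightly off phrasings: ``$\pi_*$ would annihilate all of its representatives and the image would vanish'' (if $\pi_*$ kills the representative at the detecting filtration $s_1$, the image need not vanish, it merely drops to filtration $<s_1$, which already contradicts detection at the top); and ``its representative has a nonzero component equal to $(u_1t^{-1})^\delta t^{-i}$'' (the representative is a single element of one bidegree, not a sum across filtrations). Finally, the assertion that a degree-and-column count shows $(u_1t^{-1})^\delta t^{-i}$ is a permanent cycle is not justified as stated: ruling out a differential $d^r$ by naturality against $\pi_*$ requires injectivity of $\pi_*$ on the \emph{target} bidegree, and this fails whenever the target is divisible by $\lambda_2$ or $t\mu_2$, both of which lie in $\ker\pi_*$. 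Fortunately this step is not needed: permanence of $(u_1t^{-1})^\delta t^{-i}$ is a \emph{consequence} of the detection argument, not a prerequisite.
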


\begin{proof}
In the commutative square
\begin{equation*}
\xymatrix{
V(1)_* \thh(\ell/p) \ar[r]^-{\hat\Gamma_1} \ar[d]^{\pi_*} & V(1)_*
\thh(\ell/p)^{tC_p} \ar[d]^{\pi_*} \\
V(1)_* \thh(\Z/p) \ar[r]^-{\hat\Gamma_1} & V(1)_* \thh(\Z/p)^{tC_p}
}
\end{equation*}
the classes $\epsilon_0^\delta \mu_0^i$ in the upper left hand corner
map to classes in the lower right hand corner that are represented by
$(u_1 t^{-1})^\delta t^{-i}$ in degrees $\le (2p-2)$, and
$\bar\epsilon_1$ maps to $\epsilon_0 \mu_0^{p-1} - \epsilon_1$ in
degree $(2p-1)$.  This follows by combining
Proposition~\ref{kkmodp-prop:5.4} and
Lemma~\ref{kkmodp-lem:6.1}.

The first $(2p-1)$ of these are represented in maximal filtration (on
the horizontal axis), so their images in the upper right hand corner
must be represented by permanent cycles $(u_1 t^{-1})^\delta t^{-i}$ in
the Tate spectral sequence $\hat E^\infty(C_p, \ell/p)$.

The image of the last class, $\bar\epsilon_1$, in the upper right hand
corner could either be represented by $\bar\epsilon_1$ in
bidegree~$(0,2p-1)$ or by $u_1 t^{-p}$ in bidegree~$(2p-1,0)$.
However, the last class supports a differential
$d^{2p^2}(u_1 t^{-p}) \doteq 
u_1 t^{p^2-p} \lambda_2$, by Lemma~\ref{kkmodp-lem:6.6} above.  This only leaves the
other possibility, that $\hat\Gamma_1(\bar\epsilon_1)$ is represented
by $\bar\epsilon_1$ in $\hat E^\infty(C_p, \ell/p)$.
\end{proof}

We proceed to determine the differential structure in $\hat E^*(C_p,
\ell/p)$, making use of the permanent cycles identified above.

\begin{proposition}\label{kkmodp-prop:6.8}
The $C_p$-Tate spectral sequence in $V(1)$-homotopy for $\thh(\ell/p)$
has
\begin{equation*}
\hat E^3(C_p, \ell/p) = E(u_1, \bar\epsilon_1, \lambda_2) \otimes
	P(t^{\pm1}, t\mu_2) \,.
\end{equation*}
It has differentials generated by
\begin{equation*}
d^{2p^2-2p+2}(t^{p-p^2} \cdot t^{-i}\bar\epsilon_1) \doteq t\mu_2
\cdot t^{-i} 
\end{equation*}
for $0<i<p$, $d^{2p^2}(t^{p-p^2}) \doteq t^p\lambda_2$ and
$d^{2p^2+1}(u_1 t^{-p^2}) \doteq t\mu_2$.  The subsequent terms are
\begin{equation*}
\begin{aligned}
\hat E^{2p^2-2p+3}(C_p, \ell/p) &= E(u_1, \lambda_2) \otimes
	\F_p\{t^{-i} \mid 0<i<p\} \otimes P(t^{\pm p}) \\
	&\qquad \oplus E(u_1, \bar\epsilon_1, \lambda_2) \otimes
	P(t^{\pm p}, t\mu_2) \\
\hat E^{2p^2+1}(C_p, \ell/p) &= E(u_1, \lambda_2) \otimes
	\F_p\{t^{-i} \mid 0<i<p\} \otimes P(t^{\pm p^2}) \\
	&\qquad \oplus E(u_1, \bar\epsilon_1, \lambda_2) \otimes
	P(t^{\pm p^2}, t\mu_2) \\
\hat E^{2p^2+2}(C_p, \ell/p) &= E(u_1, \lambda_2) \otimes
	\F_p\{t^{-i} \mid 0<i<p\} \otimes P(t^{\pm p^2}) \\
	&\qquad \oplus E(\bar\epsilon_1, \lambda_2) \otimes P(t^{\pm
	p^2}) \,.
\end{aligned}
\end{equation*}
The last term can be rewritten as
\begin{equation*}
\hat E^\infty(C_p, \ell/p) = \bigl( E(u_1) \otimes \F_p\{t^{-i} \mid
0<i<p\} \oplus E(\bar\epsilon_1) \bigr) \otimes E(\lambda_2) \otimes
P(t^{\pm p^2}) \,.
\end{equation*}
\end{proposition}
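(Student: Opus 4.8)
The plan is to exploit that $\hat E^*(C_p, \ell/p)$ is a module spectral sequence over $\hat E^*(C_p, \ell)$, whose differentials are recalled above from \cite{kkmodp-AR02}*{Th.~5.5}, and to supplement the inherited information with the permanent cycles furnished by Lemma~\ref{kkmodp-lem:6.7}. Concretely: starting from the $E^3$-term \eqref{kkmodp-eq:6.5}, I would first pin down the permanent cycles, then establish the three families of differentials — the $d^{2p^2-2p+2}$-family being the genuinely new input — and finally read off the successive pages by bookkeeping.

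For the permanent cycles: $\bar\epsilon_1$ is an infinite cycle since, by Lemma~\ref{kkmodp-lem:6.7}, $\hat\Gamma_1(\bar\epsilon_1)$ is represented by $\bar\epsilon_1$ on the vertical axis; by the same lemma, $t^{-i}$ and $u_1 t^{-i}$ for $0<i<p$ are infinite cycles (they represent $\hat\Gamma_1(\mu_0^i)$ and $\hat\Gamma_1(\epsilon_0\mu_0^{i-1})$ on the horizontal axis); and $\lambda_2$, $t\mu_2$, $t^{\pm p^2}$ are infinite cycles by the module structure over $\hat E^*(C_p, \ell)$, applied to the unit class. The module structure also shows that $\hat E^{2p+1}(C_p, \ell/p) = \hat E^3(C_p, \ell/p)$ and that no differential occurs before $d^{2p^2-2p+2}$: on the sub-$\hat E^*(C_p, \ell)$-module generated by $1$ and $\bar\epsilon_1$ this holds because the image of the $d^{2p}$-differential of $\hat E^*(C_p, \ell)$ is contained in the ideal $(\lambda_1)$, which maps to $0$ after multiplying by $1$ or $\bar\epsilon_1$ (as $\lambda_1$ and $\lambda_1\bar\epsilon_1$ vanish, by Proposition~\ref{kkmodp-prop:5.4}), while the complementary summand of $\hat E^3(C_p,\ell/p)$ is spanned by products of the permanent cycles just listed. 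Likewise, applying $(-)\cdot 1$ to the $\ell$-differentials gives $d^{2p^2}(t^{p-p^2})\doteq t^p\lambda_2$ and $d^{2p^2+1}(u_1 t^{-p^2})\doteq t\mu_2$, once one checks $t^{p-p^2}$ and $u_1 t^{-p^2}$ survive that far, which follows by the same $(\lambda_1)$-vanishing argument.

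The heart of the proof is the $d^{2p^2-2p+2}$-family. For $0<i<p$ the class $t^{1-i}\mu_2 = (t\mu_2)\cdot t^{-i}$ is a permanent cycle — a product of two permanent cycles — hence cannot support a differential, yet it cannot survive to $\hat E^\infty$. Indeed, by \cite{kkmodp-AR02}*{Th.~5.5} the localization $\hat\Gamma_1$ for $\ell$ sends $\mu_2$ to a class detected by $t^{-p^2}$, so $\hat\Gamma_1(\mu_0^{i-1}\mu_2) = \mu_2\cdot\hat\Gamma_1(\mu_0^{i-1})$ is detected by $t^{1-p^2-i}$, at filtration $2p^2+2i-2$; since $\mu_0^{i-1}\mu_2\neq 0$ in $V(1)_*\thh(\ell/p)$ by Proposition~\ref{kkmodp-prop:5.4}, a surviving permanent cycle at the strictly smaller filtration $2i-2$ in the same total degree $2p^2+2i-2$ is excluded. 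So $t^{1-i}\mu_2$ is a boundary. The differential hitting it has source of total degree $2p^2+2i-1$; invoking the naturality argument with respect to $F^t$ from the proof of Lemma~\ref{kkmodp-lem:6.6} to rule out odd-length differentials, and then inspecting the bidegrees of the classes of total degree $2p^2+2i-1$ that survive to the relevant page, the only possibility is $d^{2p^2-2p+2}(t^{p-p^2}\cdot t^{-i}\bar\epsilon_1)\doteq t\mu_2\cdot t^{-i}$ (the classes $t^{p-p^2-i}\bar\epsilon_1$ must die as well, so the pairing is forced). I expect this to be the main obstacle, precisely because the source class has no counterpart in $\hat E^*(C_p, \ell)$, so the differential cannot be inherited and must be extracted from the constraints above.

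Finally, the successive pages: the $d^{2p^2-2p+2}$-family, being $E(u_1,\lambda_2)\otimes P(t^{\pm p}, t\mu_2)$-linear on its page, kills exactly the classes $u_1^a\lambda_2^c t^m\bar\epsilon_1$ with $m\not\equiv 0\pmod p$ and $u_1^a\lambda_2^c t^m\mu_2^j$ with $j\ge 1$, $m\not\equiv j\pmod p$, yielding the stated $\hat E^{2p^2-2p+3}(C_p,\ell/p)$; nothing happens between pages $2p^2-2p+3$ and $2p^2$ (module structure on the $1,\bar\epsilon_1$-generated part, permanence on its complement); the inherited $d^{2p^2}(t^{p-p^2})\doteq t^p\lambda_2$ and $d^{2p^2+1}(u_1 t^{-p^2})\doteq t\mu_2$ then cut $P(t^{\pm p})$ down to $P(t^{\pm p^2})$ throughout and delete the $u_1$-classes from the $P(t^{\pm p^2}, t\mu_2)$-summand, giving $\hat E^{2p^2+1}$ and $\hat E^{2p^2+2}$. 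Every generator of $\hat E^{2p^2+2}(C_p,\ell/p)$ is then a permanent cycle — the $t^{-i}$, $u_1 t^{-i}$ ($0<i<p$), together with $1$, $\bar\epsilon_1$, $\lambda_2$ and their products with $t^{\pm p^2}$, by Lemma~\ref{kkmodp-lem:6.7} and the module structure — so $\hat E^{2p^2+2}=\hat E^\infty$, and the final rewriting is immediate.
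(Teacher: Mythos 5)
Your overall plan — build on the $\hat E^*(C_p,\ell)$-module structure, use the permanent cycles from Lemma~\ref{kkmodp-lem:6.7}, and pin down the one genuinely new family $d^{2p^2-2p+2}$ — matches the paper's. But the two steps that are supposed to force this new family both have gaps.

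First, your reason that $t^{1-i}\mu_2=(t\mu_2)\cdot t^{-i}$ cannot survive does not hold up. You observe that $\hat\Gamma_1(\mu_0^{i-1}\mu_2)=\mu_2\cdot\hat\Gamma_1(\mu_0^{i-1})$ is nonzero and detected by $t^{1-p^2-i}$ in filtration $2p^2+2i-2$, and conclude that a surviving class detected in the strictly smaller filtration $2i-2$ of the same total degree is ``excluded.'' But having one nonzero class detected in a given filtration in no way precludes another nonzero $E^\infty$-class in a different filtration of the same total degree; a priori $V(1)_{2p^2+2i-2}\thh(\ell/p)^{tC_p}$ could be more than one-dimensional (indeed, that group is exactly what is being computed, and any coconnectivity statement for $\hat\Gamma_1$ on $\ell/p$ is Proposition~\ref{kkmodp-prop:6.9}, which comes \emph{after} this one). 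The paper's actual mechanism is different: multiplication by $v_2$ is zero on $V(1)_*\thh(\ell)^{tC_p}$ for degree reasons, hence zero on $V(1)_*\thh(\ell/p)^{tC_p}$ by the module structure; since $t\mu_2$ represents $v_2$ and $t^{-i}$ detects the nonzero class $\hat\Gamma_1(\mu_0^i)$, the product $t\mu_2\cdot t^{-i}$ would detect $v_2\cdot\hat\Gamma_1(\mu_0^i)=0$, so it must vanish at $E^\infty$, i.e.\ be a boundary. That is the missing ingredient.

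Second, once one knows $t\mu_2\cdot t^{-i}$ must be hit, your appeal to the $F^t$-naturality argument from Lemma~\ref{kkmodp-lem:6.6} to rule out odd-length differentials is applied to the wrong side. That argument shows that classes in the image of $\hat E^r(F^t)$ — that is, classes in even columns — cannot \emph{support} differentials of odd length, because $\hat E^*(S^1,\ell/p)$ lives entirely in even columns. Your target $t\mu_2\cdot t^{-i}$ sits in an even column ($s=2i-2$), so an odd-length differential hitting it would come from an \emph{odd}-column source such as $u_1 t^{-p^2-i}$ via $d^{2p^2+1}$, and odd columns are not constrained by the $F^t$-argument. This is precisely why the paper works with the $\delta=1$ targets $t\mu_2\cdot(u_1 t^{-1})t^{-i}$: those lie in odd columns, where an odd-length differential would have to originate from an even column, which is excluded. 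The paper then recovers the $\delta=0$ case by the module structure (multiplication by $u_1$). (Alternatively, the $\delta=0$ case could be handled directly by noting that $u_1 t^{-p^2-i}=t^{-p^2}\cdot u_1 t^{-i}$ is a surviving permanent cycle — it detects $\mu_2\cdot\hat\Gamma_1(\epsilon_0\mu_0^{i-1})$ — and hence cannot support $d^{2p^2+1}$; but your write-up does not make that argument.)

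The rest of the proposal — the identification of permanent cycles, the vanishing before $d^{2p^2-2p+2}$ via $(\lambda_1)$-triviality, the inherited $d^{2p^2}$ and $d^{2p^2+1}$, and the bookkeeping of the successive pages — is in line with the paper and unobjectionable.
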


\begin{proof}
We have already identified the $E^2$- and $E^3$-terms above.  The
$E^3$-term~\eqref{kkmodp-eq:6.5} is generated over $\hat E^3(C_p, \ell)$ by an
$\F_p$-basis for $E(\bar\epsilon_1)$, so the next possible
differential is induced by $d^{2p}(t^{1-p}) \doteq t\lambda_1$.  But
multiplication by $\lambda_1$ is trivial in $V(1)_* \thh(\ell/p)$, by
Proposition~\ref{kkmodp-prop:5.4}, so $\hat E^3(C_p, \ell/p) = \hat E^{2p+1}(C_p,
\ell/p)$.
This term is generated over $\hat E^{2p+1}(C_p, \ell)$ by $P_p(t^{-1})
\otimes E(\bar\epsilon_1)$.  Here $1, t^{-1}, \dots, t^{1-p}$ and
$\bar\epsilon_1$ are permanent cycles, by
Lemma~\ref{kkmodp-lem:6.7}.  Any
$d^r$-differential before $d^{2p^2}$ must therefore originate on a
class $t^{-i} \bar\epsilon_1$ for $0<i<p$, and be of even length~$r$,
since these classes lie in even columns.  For bidegree reasons, the
first possibility is $r = 2p^2-2p+2$, so $\hat E^3(C_p, \ell/p) = \hat
E^{2p^2-2p+2}(C_p, \ell/p)$.

Multiplication by $v_2$ acts trivially on $V(1)_* \thh(\ell)$ and
$V(1)_* \thh(\ell)^{tC_p}$ for degree reasons, and therefore also on
$V(1)_* \thh(\ell/p)$ and $V(1)_* \thh(\ell/p)^{tC_p}$ by the module
structure.  The class $v_2$ maps to $t\mu_2$ in the $S^1$-Tate spectral
sequence for $\ell$, as recalled above, so multiplication by $v_2$ is
represented by multiplication by $t\mu_2$ in the $C_p$-Tate spectral
sequence for $\ell/p$.  Applied to the permanent cycles $(u_1 t^{-1})^\delta
t^{-i}$ in degrees $\le (2p-2)$, this implies that the products
\begin{equation*}
t\mu_2 \cdot (u_1 t^{-1})^\delta t^{-i}
\end{equation*}
must be infinite cycles representing zero, i.e., they must be hit by
differentials.  In the cases $\delta=1$, $0\le i\le p-2$, these classes
in odd columns cannot be hit by differentials of odd length,
such as $d^{2p^2+1}$, so the only possibility is
\begin{equation*}
d^{2p^2-2p+2}(t^{p-p^2} \cdot (u_1 t^{-1})t^{-i} \bar\epsilon_1 )
	\doteq t\mu_2 \cdot (u_1 t^{-1})t^{-i}
\end{equation*}
for $0\le i\le p-2$.  By the module structure (consider multiplication
by $u_1$) it follows that
\begin{equation*}
d^{2p^2-2p+2}(t^{p-p^2} \cdot t^{-i} \bar\epsilon_1 )
	\doteq t\mu_2 \cdot t^{-i}
\end{equation*}
for $0<i<p$.  Hence we can compute from~\eqref{kkmodp-eq:6.5} that
\begin{equation*}
\begin{aligned}
\hat E^{2p^2-2p+3}(C_p, \ell/p) &= E(u_1) \otimes P(t^{\pm p})
	\otimes \F_p\{t^{-i} \mid 0<i<p\} \otimes E(\lambda_2) \\
&\qquad \oplus E(u_1) \otimes P(t^{\pm p}) \otimes E(\bar\epsilon_1)
	\otimes E(\lambda_2) \otimes P(t\mu_2) \,.
\end{aligned}
\end{equation*}
This is generated over $\hat E^{2p+1}(C_p, \ell)$ by the permanent
cycles $1, t^{-1}, \dots, t^{1-p}$ and~$\bar\epsilon_1$, so the next
differential is induced by $d^{2p^2}(t^{p-p^2}) \doteq t^p\lambda_2$.
This leaves
\begin{equation*}
\begin{aligned}
\hat E^{2p^2+1}(C_p, \ell/p) &= E(u_1) \otimes P(t^{\pm p^2})
	\otimes \F_p\{t^{-i} \mid 0<i<p\} \otimes E(\lambda_2) \\
&\qquad \oplus E(u_1) \otimes P(t^{\pm p^2}) \otimes E(\bar\epsilon_1)
	\otimes E(\lambda_2) \otimes P(t\mu_2) \,.
\end{aligned}
\end{equation*}
Finally, $d^{2p^2+1}(u_1 t^{-p^2}) \doteq t\mu_2$ applies, and leaves
\begin{equation*}
\begin{aligned}
\hat E^{2p^2+2}(C_p, \ell/p) &= E(u_1) \otimes P(t^{\pm p^2})
	\otimes \F_p\{t^{-i} \mid 0<i<p\} \otimes E(\lambda_2) \\
&\qquad \oplus P(t^{\pm p^2}) \otimes E(\bar\epsilon_1) \otimes
	E(\lambda_2) \,.
\end{aligned}
\end{equation*}
For bidegree reasons, $\hat E^{2p^2+2} = \hat E^\infty$.
\end{proof}

\begin{proposition}\label{kkmodp-prop:6.9}
The comparison map $\hat\Gamma_1$ takes the classes 
\begin{equation*}
\textup{\it{  
$\epsilon_0^\delta
\mu_0^i$, $\bar\epsilon_1$, $\lambda_2$ and $\mu_2$ in $V(1)_*
\thh(\ell/p)$}}
\end{equation*}
to classes in $V(1)_* \thh(\ell/p)^{tC_p}$ represented by
\begin{equation*}
\textup{\it{  
$(u_1 t^{-1})^\delta t^{-i}$, $\bar\epsilon_1$, $\lambda_2$ and
$t^{-p^2}$ in $\hat E^\infty(C_p, \ell/p)$,}}
\end{equation*}
up to a unit
factor in $\F_p$, respectively.
Thus
\begin{equation*}
V(1)_* \thh(\ell/p)^{tC_p} \cong \F_p\{1, \epsilon_0, \mu_0, \epsilon_0
  \mu_0, \dots, \mu_0^{p-1}, \bar\epsilon_1\} \otimes E(\lambda_2) \otimes
  P(\mu_2^{\pm1})
 \end{equation*}
and $\hat\Gamma_1$ induces an identification 
$V(1)_* \thh(\ell/p) [\mu_2^{-1}] \cong V(1)_*
\thh(\ell/p)^{tC_p}$. In particular, $\hat\Gamma_1$ 
factors as the algebraic localization map and identification
\begin{equation*} 
\hat\Gamma_1 \: V(1)_* \thh(\ell/p) \to V(1)_* \thh(\ell/p) [\mu_2^{-1}]
\cong V(1)_* \thh(\ell/p)^{tC_p} \,,
\end{equation*}
and is $(2p-2)$-coconnected.
\end{proposition}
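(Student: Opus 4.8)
The plan is to read off the effect of $\hat\Gamma_1$ on a generating set for $V(1)_*\thh(\ell/p)$ from results already in hand, and then to compare free modules. By Proposition~\ref{kkmodp-prop:5.4}, $V(1)_*\thh(\ell/p)$ is free over $V(1)_*\thh(\ell)/(\lambda_1) = E(\lambda_2)\otimes P(\mu_2)$ on the $2p$ classes $1, \epsilon_0, \mu_0, \epsilon_0\mu_0, \dots, \mu_0^{p-1}, \bar\epsilon_1$, and Lemma~\ref{kkmodp-lem:6.7} already records that $\hat\Gamma_1$ carries these to classes detected in $\hat E^\infty(C_p,\ell/p)$ by the permanent cycles $(u_1 t^{-1})^\delta t^{-i}$ on the horizontal axis (in degrees $\le 2p-2$) and by $\bar\epsilon_1$ on the vertical axis (in degree $2p-1$). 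For $\lambda_2$ and $\mu_2$ I would invoke naturality of $\hat\Gamma_1$ along $i\:\ell\to\ell/p$: these classes in $V(1)_*\thh(\ell/p)$ are the images of the like-named classes in $V(1)_*\thh(\ell)$, and the comparison map for $\thh(\ell)$ sends $\lambda_2\mapsto\lambda_2$ and $\mu_2\mapsto t^{-p^2}$ up to a unit, as recalled before Lemma~\ref{kkmodp-lem:6.6}; since by Proposition~\ref{kkmodp-prop:6.8} both $\lambda_2$ and $t^{\pm p^2}$ survive nontrivially to $\hat E^\infty(C_p,\ell/p)$, the images $\hat\Gamma_1(\lambda_2)$, $\hat\Gamma_1(\mu_2)$ are represented as asserted. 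In particular $\hat\Gamma_1(\mu_2)$ is invertible in $V(1)_*\thh(\ell/p)^{tC_p}$, because $\mu_2$ already acts invertibly on the $V(1)_*\thh(\ell)^{tC_p}\cong V(1)_*\thh(\ell)[\mu_2^{-1}]$-module $V(1)_*\thh(\ell/p)^{tC_p}$.

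Next I would pin down $V(1)_*\thh(\ell/p)^{tC_p}$ itself. By Proposition~\ref{kkmodp-prop:6.8}, $\hat E^\infty(C_p,\ell/p)$ is a \emph{free} module over $\hat E^\infty(C_p,\ell)/(\lambda_1)\cong E(\lambda_2)\otimes P(t^{\pm p^2})$ on precisely the $2p$ leading terms $(u_1 t^{-1})^\delta t^{-i}$ and $\bar\epsilon_1$ found above. The $C_p$-Tate spectral sequence converges, and by the explicit shape of its $E^\infty$-term the filtration is finite in each total degree, so this freeness lifts: $V(1)_*\thh(\ell/p)^{tC_p}$ is free over $V(1)_*\thh(\ell)^{tC_p}/(\lambda_1) = E(\lambda_2)\otimes P(\mu_2^{\pm1})$ on the classes $\hat\Gamma_1(1),\dots,\hat\Gamma_1(\bar\epsilon_1)$, which gives the displayed formula $V(1)_*\thh(\ell/p)^{tC_p}\cong\F_p\{1,\epsilon_0,\mu_0,\dots,\mu_0^{p-1},\bar\epsilon_1\}\otimes E(\lambda_2)\otimes P(\mu_2^{\pm1})$. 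As $\hat\Gamma_1$ is a map of modules over $\hat\Gamma_1\:V(1)_*\thh(\ell)\to V(1)_*\thh(\ell)^{tC_p}$ sending the $2p$ free-module generators of the source to those of the target, and it inverts $\mu_2$, it factors through $V(1)_*\thh(\ell/p)[\mu_2^{-1}]$; localizing Proposition~\ref{kkmodp-prop:5.4} shows the latter is free over $E(\lambda_2)\otimes P(\mu_2^{\pm1})$ on the same $2p$ generators, so the induced map $V(1)_*\thh(\ell/p)[\mu_2^{-1}]\to V(1)_*\thh(\ell/p)^{tC_p}$ carries a basis to a basis and is an isomorphism.

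For the coconnectivity claim I would write $\hat\Gamma_1$ as the composite of the localization map $V(1)_*\thh(\ell/p)\to V(1)_*\thh(\ell/p)[\mu_2^{-1}]$ with the isomorphism just produced, so it suffices to treat the localization map. It is injective since $V(1)_*\thh(\ell/p)$ is a free $P(\mu_2)$-module, and as such it is generated over $P(\mu_2)$ by $E(\lambda_2)\otimes\F_p\{1,\epsilon_0,\dots,\bar\epsilon_1\}$, whose elements lie in degrees between $0$ and $|\lambda_2\bar\epsilon_1| = 2p^2+2p-2$. Since $|\mu_2| = 2p^2$, any class $\mu_2^{-k}x$ with $k\ge1$ has degree at most $2p-2$, so the localization map is surjective, hence an isomorphism, in all degrees $\ge 2p-1$, and injective in degree $2p-2$; this is exactly $(2p-2)$-coconnectivity. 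Finally, the module structure over $P(\mu_2^{\pm1})$ in the stated isomorphism is the one transported from $V(1)_*\thh(\ell/p)[\mu_2^{-1}]$ along $\hat\Gamma_1$.

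The step I expect to need the most care is the lifting in the second paragraph: one must be sure that the freeness visible on $\hat E^\infty(C_p,\ell/p)$ is not spoiled by multiplicative or additive extensions in the $C_p$-Tate spectral sequence. This is precisely where the finiteness of the $E^\infty$-filtration in each total degree — a consequence of the explicit answer in Proposition~\ref{kkmodp-prop:6.8} — together with the module structure over the already-understood $V(1)_*\thh(\ell)^{tC_p}$, has to be used.
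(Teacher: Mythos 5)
Your proposal is correct and follows essentially the same route as the paper's proof: read off the images of the $2p$ module generators from Lemma~\ref{kkmodp-lem:6.7}, get $\lambda_2$ and $\mu_2$ by naturality from the $\thh(\ell)$ case, identify the abutment from the $E^\infty$-term in Proposition~\ref{kkmodp-prop:6.8} using the $V(1)_*\thh(\ell)^{tC_p}$-module structure, and read off coconnectivity from degree bounds. The paper is terser (it simply notes the top class missed is $\bar\epsilon_1\lambda_2\mu_2^{-1}$ in degree $2p-2$), while you spell out the lift of freeness from $E^\infty$ and the explicit degree estimate, but the underlying argument is the same.
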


\begin{proof}
The image under $\hat\Gamma_1$ of the classes $1, \epsilon_0,
\mu_0, \epsilon_0 \mu_0, \dots, \mu_0^{p-1}$ and $\bar\epsilon_1$
was given in Lemma~\ref{kkmodp-lem:6.7}, and the action on the classes $\lambda_2$
and $\mu_2$ is given in the proof of \cite{kkmodp-AR02}*{Th.~5.5}.  The structure of
$V(1)_* \thh(\ell/p)^{tC_p}$ is then immediate from the $E^\infty$-term
in Proposition~\ref{kkmodp-prop:6.8}.  The top class not in the image of $\hat\Gamma_1$
is $\bar\epsilon_1 \lambda_2 \mu_2^{-1}$, in degree~$(2p-2)$.
\end{proof}

Recall that
\begin{equation*}
\begin{aligned}
TF(B;p) &= \holim_{n,F} \thh(B)^{C_{p^n}} \\
TR(B;p) &= \holim_{n,R} \thh(B)^{C_{p^n}}
\end{aligned}
\end{equation*}
are defined as the homotopy limits over the Frobenius and the restriction
maps
\begin{equation*}
F, R \: \thh(B)^{C_{p^n}} \to \thh(B)^{C_{p^{n-1}}} \,,
\end{equation*}
respectively.

\begin{corollary}\label{kkmodp-cor:6.10}
The comparison maps
\begin{equation*}
\begin{aligned}
\Gamma_n &\: \thh(\ell/p)^{C_{p^n}} \to \thh(\ell/p)^{hC_{p^n}} \\
\hat\Gamma_n &\: \thh(\ell/p)^{C_{p^{n-1}}} \to \thh(\ell/p)^{tC_{p^n}}
\end{aligned}
\end{equation*}
for $n\ge1$, and
\begin{equation*}
\begin{aligned}
\Gamma &\: TF(\ell/p;p) \to \thh(\ell/p)^{hS^1} \\
\hat\Gamma &\: TF(\ell/p;p) \to \thh(\ell/p)^{tS^1}
\end{aligned}
\end{equation*}
all induce $(2p-2)$-coconnected homomorphisms on $V(1)$-homotopy.
\end{corollary}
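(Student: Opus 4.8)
The plan is to deduce everything from Proposition~\ref{kkmodp-prop:6.9}, which is precisely the case $n=1$ of the map $\hat\Gamma_n$, by the bootstrap argument of \cite{kkmodp-HM97}*{\S1} (see also \cite{kkmodp-AR02}*{\S3}). The first observation is a ``toggle'': in the commutative diagram of horizontal cofiber sequences recalled in Section~\ref{kkmodp-sec:tate}, the left-hand vertical map is the identity of $\thh(\ell/p)_{hC_{p^n}}$, the middle map is $\Gamma_n$, and the third map is $\hat\Gamma_n$. Passing to $V(1)$-homotopy yields a map of long exact sequences whose homotopy-orbit terms are matched by isomorphisms, so the five lemma (and its one-sided variants for the injectivity at the edge degree) shows that, for each fixed $n\ge1$, the homomorphism $\Gamma_n$ is $(2p-2)$-coconnected if and only if $\hat\Gamma_n$ is. In particular Proposition~\ref{kkmodp-prop:6.9} gives the assertion for $\hat\Gamma_1$, hence also for $\Gamma_1$.

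Next I would induct on $n$. Assuming $\Gamma_n$ and $\hat\Gamma_n$ are $(2p-2)$-coconnected, one compares the level-$(n+1)$ fundamental cofiber sequence $\thh(\ell/p)_{hC_{p^{n+1}}}\to\thh(\ell/p)^{C_{p^{n+1}}}\to\thh(\ell/p)^{C_{p^n}}$ with the cofiber sequence produced from the level-$n$ data by means of the cyclotomic structure equivalence $\Phi^{C_p}\thh(\ell/p)\simeq\thh(\ell/p)$; in the resulting map of cofiber sequences the homotopy-orbit corners are again compared by an equivalence, while the other two vertical maps are governed by $\Gamma_n$ and $\hat\Gamma_n$. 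A further application of the five lemma then shows that $\hat\Gamma_{n+1}$ is $(2p-2)$-coconnected, and by the toggle of the first paragraph so is $\Gamma_{n+1}$. Setting up this auxiliary ladder correctly --- identifying which vertical maps are equivalences and which carry the inductive hypothesis, so that the bound $2p-2$ is never lost --- is the delicate part of the proof, and is exactly the point where one invokes \cite{kkmodp-HM97}*{\S1}.

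Finally, the $S^1$-equivariant maps are homotopy limits of the ones just treated. With $V(1)$-coefficients one has $\thh(\ell/p)^{hS^1}\simeq\holim_n\thh(\ell/p)^{hC_{p^n}}$ and $\thh(\ell/p)^{tS^1}\simeq\holim_n\thh(\ell/p)^{tC_{p^n}}$, and $TF(\ell/p;p)=\holim_{n,F}\thh(\ell/p)^{C_{p^n}}$; since the maps $\Gamma_n$ and $\hat\Gamma_{n+1}$ are compatible with the Frobenius maps, one gets $\Gamma=\holim_n\Gamma_n$ and $\hat\Gamma=\holim_n\hat\Gamma_{n+1}$. Because each of these term-wise maps is an isomorphism on $V(1)_k$ for $k\ge 2p-1$ and a monomorphism on $V(1)_{2p-2}$, the induced maps of $\lim$-terms and of ${\lim}^1$-terms are isomorphisms in all degrees $\ge 2p-1$; comparing the Milnor $\lim$-${\lim}^1$ exact sequences for source and target then shows that $\Gamma$ and $\hat\Gamma$ are $(2p-2)$-coconnected as well. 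Note that no vanishing of ${\lim}^1$-terms is required.
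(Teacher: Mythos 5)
Your overall strategy --- reduce everything to the case $n=1$ established in Proposition~\ref{kkmodp-prop:6.9}, then run an inductive bootstrap and pass to the limit --- is exactly the strategy of the theorem the paper cites; the paper's own proof is simply ``\,by a theorem of Tsalidis~\cite{kkmodp-Ts98}*{Th.~2.4} and Proposition~\ref{kkmodp-prop:6.9}, just like in \cite{kkmodp-AR02}*{Th.~5.7}.'' Your first paragraph (the ``toggle'' between $\Gamma_n$ and $\hat\Gamma_n$ via the map of norm cofiber sequences) is correct, and your third paragraph (comparing Milnor $\lim$--$\Rlim$ sequences to pass from $C_{p^n}$ to $S^1$) is also correct.

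The gap is in the second paragraph, the induction step, which is where the entire content of Tsalidis' theorem resides. As written, comparing the norm cofiber sequence at level $n+1$ with ``the cofiber sequence produced from the level-$n$ data by means of the cyclotomic structure equivalence'' is not a precise description of a diagram: the norm comparison at level $n+1$ has vertical maps $\mathrm{id}$, $\Gamma_{n+1}$, $\hat\Gamma_{n+1}$, which only reproduces the toggle at level $n+1$ and does not by itself import the inductive hypothesis at level~$n$. The actual bridge from level $n$ to level $n+1$ is the commutative square
\begin{equation*}
\xymatrix{
\thh(\ell/p)^{C_{p^n}} \ar[r]^-{\Gamma_n} \ar[d]^{\hat\Gamma_{n+1}}
  & \thh(\ell/p)^{hC_{p^n}} \ar[d]^{(\hat\Gamma_1)^{hC_{p^n}}} \\
\thh(\ell/p)^{tC_{p^{n+1}}} \ar[r]^-{G_n}
  & (\rho_p^*\thh(\ell/p)^{tC_p})^{hC_{p^n}}
}
\end{equation*}
(the right half of the paper's diagram~\eqref{kkmodp-eq:7.3}), and closing the induction requires two non-formal inputs on top of the inductive hypothesis: (i) that $(\hat\Gamma_1)^{hC_{p^n}}$ inherits $(2p-2)$-coconnectedness from $\hat\Gamma_1$, which needs conditional convergence of the $C_{p^n}$-homotopy fixed point spectral sequences and the fact that $V(1)\wedge\thh(\ell/p)$ is bounded below of finite type; and (ii) a matching connectivity estimate for $G_n$, the fixed-to-homotopy-fixed comparison for $\rho_p^*\thh(\ell/p)^{tC_p}$. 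Neither of these is a five-lemma consequence of the data you list, and (ii) in particular is established in Tsalidis' paper by a filtration argument on the Tate construction, not by a ladder of cofiber sequences. You do flag ``setting up this auxiliary ladder correctly'' as the delicate point and defer to \cite{kkmodp-HM97}*{\S1}, which is acceptable as a citation, but the description you give of that ladder does not match the one that works, so as a proof sketch the middle step has a genuine hole. If you want to keep a self-contained sketch, you should replace the second paragraph by the factorization diagram above together with an explicit appeal to the finiteness hypotheses; otherwise simply cite Tsalidis as the paper does.
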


\begin{proof}
  This follows from a theorem of Tsalidis
  \cite{kkmodp-Ts98}*{Th.~2.4} and
Proposition~\ref{kkmodp-prop:6.9}
above, just like in \cite{kkmodp-AR02}*{Th.~5.7}.  See also
\cite{kkmodp-BBLNR}*{Ex.\,10.2}
\end{proof}

\section{Higher fixed points }
\label{kkmodp-sec:higher}
Let $n\ge1$.  Write $v_p(i)$ for the $p$-adic valuation of $i$.
Define a numerical function $\rho(-)$ by
\begin{equation*}
\begin{aligned}
\rho(2k-1) &= (p^{2k+1}+1)/(p+1) = p^{2k} - p^{2k-1} + \dots - p + 1 \\
\rho(2k) &= (p^{2k+2}-p^2)/(p^2-1) = p^{2k} + p^{2k-2} + \dots + p^2
\end{aligned}
\end{equation*}
for $k\ge0$, so $\rho(-1) = 1$ and $\rho(0) = 0$.  For even arguments,
$\rho(2k) = r(2k)$ as defined in \cite{kkmodp-AR02}*{Def.~2.5}.

In all of the following spectral sequences we know that $\lambda_2$,
$t\mu_2$ and $\bar\epsilon_1$ are infinite cycles.  For $\lambda_2$ and
$\bar\epsilon_1$ this follows from the $C_{p^n}$-fixed point analogue
of diagram~\eqref{kkmodp-eq:6.2}, by \cite{kkmodp-AR02}*{Prop.~2.8} and
Lemma~\ref{kkmodp-lem:6.4}.  For $t\mu_2$
it follows from \cite{kkmodp-AR02}*{Prop.~4.8}, by naturality.

\begin{theorem}\label{kkmodp-thm:7.1}
The $C_{p^n}$-Tate spectral sequence in
$V(1)$-homotopy for $\thh(\ell/p)$ begins
\begin{equation*}
\hat E^2(C_{p^n}, \ell/p) = E(u_n, \lambda_2) \otimes \F_p\{1,
  \epsilon_0, \mu_0, \epsilon_0 \mu_0, \dots, \mu_0^{p-1}, \bar\epsilon_1\}
  \otimes P(t^{\pm1}, \mu_2)
\end{equation*}
and converges to $V(1)_* \thh(\ell/p)^{tC_{p^n}}$.  It is a module
spectral sequence over the algebra spectral sequence $\hat E^*(C_{p^n},
\ell)$ converging to $V(1)_* \thh(\ell)^{tC_{p^n}}$.

There is an initial $d^2$-differential generated by
\begin{equation*}
d^2(\epsilon_0 \mu_0^{i-1}) = t \mu_0^i
\end{equation*}
for $0<i<p$.
Next, there are $2n$ families of even length differentials generated by
\begin{equation*}
d^{2\rho(2k-1)}(t^{p^{2k-1}-p^{2k}+i} \cdot \bar\epsilon_1)
	\doteq (t\mu_2)^{\rho(2k-3)} \cdot t^i
\end{equation*}
for $v_p(i) = 2k-2$, for each $k = 1, \dots, n$, and
\begin{equation*}
d^{2\rho(2k)}(t^{p^{2k-1}-p^{2k}})
	\doteq \lambda_2 \cdot t^{p^{2k-1}} \cdot (t\mu_2)^{\rho(2k-2)}
\end{equation*}
for each $k = 1, \dots, n$.
Finally, there is a differential of odd length generated by
\begin{equation*}
d^{2\rho(2n)+1}(u_n \cdot t^{-p^{2n}}) \doteq (t\mu_2)^{\rho(2n-2)+1} \,.
\end{equation*}
\end{theorem}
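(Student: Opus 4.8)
The plan is to determine $\hat E^*(C_{p^n}, \ell/p)$ by induction on the page, along the lines of the corresponding computation for $\thh(\ell)$ in \cite{kkmodp-AR02}, combining three inputs: the module structure over the algebra spectral sequence $\hat E^*(C_{p^n}, \ell)$, whose differentials are known from \cite{kkmodp-AR02}; naturality along $\pi \: \ell/p \to H\Z/p$, along $i \: \ell \to \ell/p$, and along the Tate Frobenius $F^t \: \thh(\ell/p)^{tS^1} \to \thh(\ell/p)^{tC_{p^n}}$; and the $(2p-2)$-coconnectivity of $\hat\Gamma_n$ from Corollary~\ref{kkmodp-cor:6.10}. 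The case $n = 1$ is Propositions~\ref{kkmodp-prop:6.8} and~\ref{kkmodp-prop:6.9}, once one unwinds $\rho(1) = p^2 - p + 1$, $\rho(2) = p^2$ and $\rho(-1) = 1$.

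First I would fix the $E^2$-term and the $d^2$-differential. The $E^2$-term is $\hat E^2_{s,t}(C_{p^n}, \ell/p) = \hat H_{gp}^{-s}(C_{p^n}; V(1)_t \thh(\ell/p))$, which has the asserted form by Proposition~\ref{kkmodp-prop:5.4} together with $\hat H_{gp}^*(C_{p^n}; \F_p) = E(u_n) \otimes P(t^{\pm1})$; it is a module spectral sequence over $\hat E^*(C_{p^n}, \ell)$ because $\ell \wedge \ell/p \to \ell/p$ is a map of associative $S$-algebras. The universal formula $d^2(x) = t \cdot \sigma x$, together with $\sigma \epsilon_0 = \mu_0$ from~\eqref{kkmodp-eq:5.1} and $\sigma \bar\epsilon_1 = 0$ (because $\bar\epsilon_1 = tr(\bar\epsilon_1^K)$ lies in the image of the B{\"o}kstedt trace, by Lemma~\ref{kkmodp-lem:6.4} and the equalizer property of $tr_n$ in~\eqref{kkmodp-eq:6.2}), then removes $\epsilon_0 \mu_0^{i-1}$ and $t\mu_0^i$ for $0 < i < p$ and leaves $\hat E^3(C_{p^n}, \ell/p) = E(u_n, \bar\epsilon_1, \lambda_2) \otimes P(t^{\pm1}, t\mu_2)$, which equals $\bigl(\hat E^3(C_{p^n}, \ell)/(\lambda_1)\bigr) \otimes E(\bar\epsilon_1)$ as a module, using $\lambda_1 = 0$ in $V(1)_* \thh(\ell/p)$ and $P(t^{\pm1}, \mu_2) = P(t^{\pm1}, t\mu_2)$.

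Next I would run the higher differentials by increasing page number. By naturality along $i$, the differentials of $\hat E^*(C_{p^n}, \ell)$ are imported on the submodule generated by~$1$: the $\lambda_1$-family is trivial since $\lambda_1 = 0$, whereas $d^{2\rho(2k)}(t^{p^{2k-1}-p^{2k}}) \doteq \lambda_2 \cdot t^{p^{2k-1}} \cdot (t\mu_2)^{\rho(2k-2)}$ for $k = 1, \dots, n$ and $d^{2\rho(2n)+1}(u_n t^{-p^{2n}}) \doteq (t\mu_2)^{\rho(2n-2)+1}$ persist, provided one checks that their targets are still nonzero at the relevant pages. The $\bar\epsilon_1$-family is the genuinely new ingredient, for which I would argue as in the proof of Proposition~\ref{kkmodp-prop:6.8}: multiplication by $v_2$ is zero on $V(1)_* \thh(\ell)$ for degree reasons, hence on $V(1)_* \thh(\ell/p)$ and on $V(1)_* \thh(\ell/p)^{tC_{p^n}}$ by the module structure, while $v_2$ is represented by $t\mu_2$ up to a unit (\cite{kkmodp-AR02}*{Prop.~4.8}); moreover $\hat\Gamma_n$ is $(2p-2)$-coconnected (Corollary~\ref{kkmodp-cor:6.10}) and, arguing as in Lemma~\ref{kkmodp-lem:6.7} via naturality along $\pi$ and along $i_1 \: V(0) \to V(1)$, carries the classes $\epsilon_0^\delta \mu_0^i$ in degrees $\le 2p-2$ to classes detected by the permanent cycles $(u_n t^{-1})^\delta t^{-i}$. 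Hence the products $(t\mu_2)^{\rho(2k-3)} \cdot (u_n t^{-1})^\delta t^{-i}$ are infinite cycles representing zero and must be boundaries, and a bidegree count shows the only admissible source on page $2\rho(2k-1)$ is $t^{p^{2k-1}-p^{2k}+i}\bar\epsilon_1$ with $v_p(i) = 2k-2$, yielding one family for each $k = 1, \dots, n$.

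To see that there are no further differentials, I would use that the $S^1$-Tate spectral sequence $\hat E^*(S^1, \ell/p)$, with $E^2 = P(t^{\pm1}) \otimes V(1)_* \thh(\ell/p)$ becoming $P(t^{\pm1}) \otimes E(\bar\epsilon_1, \lambda_2) \otimes P(t\mu_2)$ after $d^2$, is concentrated in even columns and hence carries only even-length differentials; since $F^t$ maps it isomorphically onto the even columns of $\hat E^*(C_{p^n}, \ell/p)$, no class in the image of $\hat E^r(F^t)$ can support an odd-length differential. Together with the module structure over $\hat E^*(C_{p^n}, \ell)$, whose $E^\infty$-term $E(\lambda_1, \lambda_2) \otimes P(t^{\pm p^{2n}})$ controls the part generated by~$1$, this leaves precisely the asserted differentials; in particular the single odd-length differential $d^{2\rho(2n)+1}(u_n t^{-p^{2n}}) \doteq (t\mu_2)^{\rho(2n-2)+1}$ is forced last, since $(t\mu_2)^{\rho(2n-2)+1}$ survives to that page in an odd column and must be killed. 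The main obstacle, as I see it, is exactly this coordination of the $\bar\epsilon_1$-family with the imported $\ell$-differentials: one must verify, family by family and according to the $p$-adic valuations of the $t$-exponents, that each source survives to exactly the claimed page $2\rho(2k-1)$ or $2\rho(2k)$ and that the corresponding target is still nonzero on that page, so that no source dies early and no target is consumed prematurely — this is where the arithmetic of the function $\rho$ and the parity input from $F^t$ must be handled carefully.
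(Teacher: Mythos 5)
Your proposal correctly handles the $E^2$-term, the $d^2$-differential (via $\sigma\epsilon_0=\mu_0$ and $\sigma\bar\epsilon_1=0$), the module structure over $\hat E^*(C_{p^n},\ell)$, and the parity restriction from $F^t$. However, there is a genuine gap in the way you produce the $\bar\epsilon_1$-family of differentials, and it is precisely the obstacle you flag at the end.

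Your argument for these differentials tries to generalize the proof of Proposition~\ref{kkmodp-prop:6.8} by asserting that ``multiplication by $v_2$ is zero on $V(1)_*\thh(\ell/p)^{tC_{p^n}}$ by the module structure,'' so that the products $(t\mu_2)^{\rho(2k-3)}\cdot(u_n t^{-1})^\delta t^{-i}$ must all be boundaries. This is false for $n\ge 2$: the $E^\infty$-term recorded in Corollary~\ref{kkmodp-cor:7.2} contains a nontrivial factor $P_{\rho(2n-2)+1}(t\mu_2)$, so $v_2$ acts nontrivially on the abutment, and the degree argument that works for $\thh(\ell)^{tC_p}$ (where $V(1)_*\thh(\ell)$ is concentrated in a bounded range and the final $d^{2p^2+1}$ kills $t\mu_2$) does not apply. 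There is a second, related problem: the classes $(u_nt^{-1})^\delta t^{-i}$ with $0<i<p$ all have $v_p(i)=0$, so they can only account for the $k=1$ family; the targets $(t\mu_2)^{\rho(2k-3)}\cdot t^i$ with $v_p(i)=2k-2$ for $k\ge 2$ are simply not of the form you are considering, and you have given no reason at all why those classes should be boundaries.

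What the paper actually does is set up an induction on $n$. Granting Theorem~\ref{kkmodp-thm:7.1} for $C_{p^n}$, one computes $\mu_2^{-1}E^\infty(C_{p^n},\ell/p)$ (Corollary~\ref{kkmodp-cor:7.5}), observes via \cite{kkmodp-AR02}*{Lem.~5.2} that $\hat E^r(C_{p^{n+1}},\ell/p)$ agrees with $\hat E^r(C_{p^n},\ell/p)$ through page $2\rho(2n)+1$ (with the odd-length $d^{2\rho(2n)+1}$ vanishing at the $(n+1)$-level), and then uses the commutative diagram~\eqref{kkmodp-eq:7.7} to identify $V(1)_*\thh(\ell/p)^{C_{p^n}}$ with $V(1)_*\thh(\ell/p)^{tC_{p^{n+1}}}$ in degrees $>2p-2$ via $\hat\Gamma_{n+1}$ and $G_n$. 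The truncation $P_{\rho(2n-1)}(t\mu_2)$ on the summand generated by $\mu_2^j$ with $v_p(j)=2n-2$ in the $C_{p^n}$-side (Lemma~\ref{kkmodp-lem:7.8}) then forces the classes $z=(t\mu_2)^{\rho(2n-1)}\cdot t^i$ with $v_p(i)=2n$ to die in $\hat E^*(C_{p^{n+1}},\ell/p)$, and the bidegree count of Lemma~\ref{kkmodp-lem:7.9} pins down the source as $t^{p^{2n+1}-p^{2n+2}+i}\bar\epsilon_1$. Without this inductive comparison, there is no reason the $k\ge 2$ targets should vanish, and your proof does not close.
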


We shall prove Theorem~\ref{kkmodp-thm:7.1} by induction on $n$.  The base
case $n=1$ was covered by Proposition~\ref{kkmodp-prop:6.8}.  We can therefore
assume that Theorem~\ref{kkmodp-thm:7.1} holds for some fixed $n\ge1$, and
must prove the corresponding statement for $n+1$.  First we make the
following deduction.

\begin{corollary}\label{kkmodp-cor:7.2}
The initial differential in the $C_{p^n}$-Tate spectral sequence in
$V(1)$-ho\-mo\-to\-py for $\thh(\ell/p)$ leaves
\begin{equation*}
\hat E^3(C_{p^n}, \ell/p) = E(u_n, \bar\epsilon_1, \lambda_2)
	\otimes P(t^{\pm1}, t\mu_2) \,.
\end{equation*}
The next $2n$ families of differentials leave the intermediate terms
\begin{equation*}
\begin{aligned}
\hat E^{2\rho(1)+1}(C_{p^n}&, \ell/p) = E(u_n, \lambda_2)
	\otimes \F_p\{t^{-i} \mid 0<i<p\} \otimes P(t^{\pm p}) \\
&\qquad \oplus E(u_n, \bar\epsilon_1, \lambda_2) \otimes P(t^{\pm p},
	t\mu_2)
\end{aligned}
\end{equation*}
(for $m=1$),
\begin{equation*}
\begin{aligned}
\hat E^{2\rho(2m-1)+1}&(C_{p^n}, \ell/p) = E(u_n, \lambda_2)
	\otimes \F_p\{t^{-i} \mid 0<i<p\} \otimes P(t^{\pm p^2}) \\
&\quad \oplus \bigoplus_{k=2}^m E(u_n, \lambda_2) \otimes
	\F_p\{t^j \mid j\in\Z,\, v_p(j) = 2k-2\} \otimes P_{\rho(2k-3)}(t\mu_2) \\
&\quad \oplus \bigoplus_{k=2}^{m-1} E(u_n, \bar\epsilon_1) \otimes
	\F_p\{t^j \lambda_2 \mid j\in\Z,\, v_p(j) = 2k-1\} \otimes
	P_{\rho(2k-2)}(t\mu_2) \\
&\qquad \oplus E(u_n, \bar\epsilon_1, \lambda_2) \otimes P(t^{\pm
	p^{2m-1}}, t\mu_2)
\end{aligned}
\end{equation*}
for $m=2, \dots, n$, and
\begin{equation*}
\begin{aligned}
\hat E^{2\rho(2m)+1}&(C_{p^n}, \ell/p) = E(u_n, \lambda_2) \otimes
	\F_p\{t^{-i} \mid 0<i<p\} \otimes P(t^{\pm p^2}) \\
&\quad \oplus \bigoplus_{k=2}^m E(u_n, \lambda_2) \otimes
	\F_p\{t^j \mid j\in\Z,\, v_p(j) = 2k-2\} \otimes P_{\rho(2k-3)}(t\mu_2) \\
&\quad \oplus \bigoplus_{k=2}^m E(u_n, \bar\epsilon_1) \otimes
	\F_p\{t^j \lambda_2 \mid j\in\Z,\, v_p(j) = 2k-1\} \otimes
	P_{\rho(2k-2)}(t\mu_2) \\
&\qquad \oplus E(u_n, \bar\epsilon_1, \lambda_2) \otimes P(t^{\pm
	p^{2m}}, t\mu_2)
\end{aligned}
\end{equation*}
for $m = 1, \dots, n$.  The final differential leaves the
$E^{2\rho(2n)+2} = E^\infty$-term, equal to
\begin{equation*}
\begin{aligned}
\hat E^\infty(C_{p^n}, \ell/p) &= E(u_n, \lambda_2) \otimes
	\F_p\{t^{-i} \mid 0<i<p\} \otimes P(t^{\pm p^2})\\
&\quad \oplus \bigoplus_{k=2}^n E(u_n, \lambda_2) \otimes
	\F_p\{t^j \mid j\in\Z,\, v_p(j) = 2k-2\} \otimes P_{\rho(2k-3)}(t\mu_2) \\
&\quad \oplus \bigoplus_{k=2}^n E(u_n, \bar\epsilon_1) \otimes
	\F_p\{t^j \lambda_2 \mid j\in\Z,\, v_p(j) = 2k-1\} \otimes
	P_{\rho(2k-2)}(t\mu_2) \\
&\qquad \oplus E(\bar\epsilon_1, \lambda_2) \otimes P(t^{\pm
	p^{2n}}) \otimes P_{\rho(2n-2)+1}(t\mu_2) \,.
\end{aligned}
\end{equation*}
\end{corollary}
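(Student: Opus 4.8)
The plan is to obtain each page of $\hat E^*(C_{p^n},\ell/p)$ by successively taking homology with respect to the differential families supplied by Theorem~\ref{kkmodp-thm:7.1}, in the same way that the successive terms in Proposition~\ref{kkmodp-prop:6.8} were read off from the differentials there (Proposition~\ref{kkmodp-prop:6.8} being precisely the case $n=1$). Throughout one uses the module structure of $\hat E^*(C_{p^n},\ell/p)$ over the algebra spectral sequence $\hat E^*(C_{p^n},\ell)$, so that each ``generating'' differential of Theorem~\ref{kkmodp-thm:7.1} together with the Leibniz rule determines $d^r$ on all classes; that $\lambda_2$, $t\mu_2$ and $\bar\epsilon_1$ are infinite cycles, as recalled just before Theorem~\ref{kkmodp-thm:7.1}; that multiplication by $\lambda_1$ vanishes on $V(1)_*\thh(\ell/p)$ by Proposition~\ref{kkmodp-prop:5.4}, so every differential of $\hat E^*(C_{p^n},\ell)$ involving $\lambda_1$ induces zero here; and the inequalities $1<\rho(1)<\rho(2)<\rho(3)<\dots$, immediate from the two formulas for $\rho$, which put the listed differentials in their stated order and account for all of them.

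I would start with the initial $d^2(x)=t\cdot\sigma x$: since $\sigma(\epsilon_0\mu_0^{i-1})=\mu_0^i$ for $0<i<p$ while $\sigma(\bar\epsilon_1)=0$, and $t$ is invertible, the $2p-2$ generators $\epsilon_0,\mu_0,\epsilon_0\mu_0,\dots,\mu_0^{p-1}$ and their $t$-multiples cancel in pairs, leaving $\hat E^3(C_{p^n},\ell/p)=E(u_n,\bar\epsilon_1,\lambda_2)\otimes P(t^{\pm1},t\mu_2)$; the next differential of $\hat E^*(C_{p^n},\ell)$ involves $\lambda_1$, so $\hat E^3=\hat E^{2\rho(1)}$. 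Then I would induct on the family index $m=1,\dots,n$, starting from $\hat E^3$ and alternately applying the families of length $2\rho(2m-1)$ and $2\rho(2m)$. The differential $d^{2\rho(2m-1)}(t^{p^{2m-1}-p^{2m}+i}\bar\epsilon_1)\doteq(t\mu_2)^{\rho(2m-3)}t^{i}$ is nonzero exactly on the classes $t^j\bar\epsilon_1$ with $v_p(j)=2m-2$, since the exponents $i$ and $j=i-p^{2m-1}(p-1)$ share this valuation when it is $<2m-1$; taking homology of the free summand $E(u_n,\bar\epsilon_1,\lambda_2)\otimes P(t^{\pm p^{2m-2}},t\mu_2)$ therefore frees those $t^j$ at $(t\mu_2)$-heights below $\rho(2m-3)$, yielding the new summand $E(u_n,\lambda_2)\otimes\F_p\{t^j\mid v_p(j)=2m-2\}\otimes P_{\rho(2m-3)}(t\mu_2)$, and retains only the $\bar\epsilon_1$-classes with $v_p(j)\ge 2m-1$, i.e.\ $E(u_n,\bar\epsilon_1,\lambda_2)\otimes P(t^{\pm p^{2m-1}},t\mu_2)$ (with the conventions $P_{\rho(-1)}=\F_p$ and $P(t^{\pm p^0})=P(t^{\pm1})$ for $m=1$). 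Next $d^{2\rho(2m)}(t^{p^{2m-1}-p^{2m}})\doteq\lambda_2\,t^{p^{2m-1}}(t\mu_2)^{\rho(2m-2)}$ cuts this free summand down to $E(u_n,\bar\epsilon_1,\lambda_2)\otimes P(t^{\pm p^{2m}},t\mu_2)$ and produces the $\lambda_2$-debris summand $E(u_n,\bar\epsilon_1)\otimes\F_p\{t^j\lambda_2\mid v_p(j)=2m-1\}\otimes P_{\rho(2m-2)}(t\mu_2)$, the earlier debris being untouched. When $m$ reaches $n$ the final $d^{2\rho(2n)+1}(u_n t^{-p^{2n}})\doteq(t\mu_2)^{\rho(2n-2)+1}$ acts, via the module structure, as $d(u_n x)\doteq(t\mu_2)^{\rho(2n-2)+1}t^{p^{2n}}x$ on the $u_n$-free part $x$ of the last free summand, whose homology is $E(\bar\epsilon_1,\lambda_2)\otimes P(t^{\pm p^{2n}})\otimes P_{\rho(2n-2)+1}(t\mu_2)$, and vanishes on every debris summand because $v_p(p^{2n}+j)=v_p(j)<2n$ keeps its target out of $P(t^{\pm p^{2n}})$; for bidegree reasons $\hat E^{2\rho(2n)+2}=\hat E^\infty$.

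In carrying out these homology computations one also needs that the differentials of Theorem~\ref{kkmodp-thm:7.1} are the only nonzero ones, so that each page is constant between consecutive named ones. This is handled exactly as in the proofs of Proposition~\ref{kkmodp-prop:6.8} and Lemma~\ref{kkmodp-lem:6.6}: the classes $1,t^{-1},\dots,t^{1-p}$, their $u_n$-multiples, $\bar\epsilon_1$, $\lambda_2$ and $t\mu_2$ are permanent cycles by the $C_{p^n}$-analogue of Lemma~\ref{kkmodp-lem:6.7} and Corollary~\ref{kkmodp-cor:6.10}, hence support nothing; each $\bar\epsilon_1$-free and $u_n$-free class lies in an even column and so, by naturality along the Tate Frobenius $F^t\:\thh(\ell/p)^{tS^1}\to\thh(\ell/p)^{tC_{p^n}}$ whose source spectral sequence is concentrated in even columns, supports no odd-length differential; and the fact that $t\mu_2$ represents $v_2$, which acts as zero on $V(1)_*\thh(\ell/p)^{tC_{p^n}}$, forces the products $t\mu_2\cdot(u_n t^{-1})^{\delta}t^{-i}$ in low degrees to be hit, which together with bidegree counts leaves room only for the families already listed.

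The step I expect to be the main obstacle is the interlocking bookkeeping of the truncation heights across all $2n$ steps: one must check that the valuation conditions $v_p(j)=2k-2$ and $v_p(j)=2k-1$ cut out on the surviving powers of $t$ are exactly what is needed for the heights $\rho(2k-3)$, $\rho(2k-2)$ and the final $\rho(2n-2)+1$ to combine into the stated $E^\infty$-term, which in the end comes down to the monotonicity of $\rho$ and the recursions it satisfies, such as $\rho(2k)=p^2\rho(2k-2)+p^2$.
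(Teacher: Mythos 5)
Your proposal is correct and follows the same route as the paper's proof: a secondary induction on $m$, taking homology of the free summand $E(u_n,\bar\epsilon_1,\lambda_2)\otimes P(t^{\pm p^{2m-2}},t\mu_2)$ with respect to each successive differential family from Theorem~\ref{kkmodp-thm:7.1}, keeping track of the debris summands via the valuation and truncation-height bookkeeping. The second-to-last paragraph, where you re-verify that the listed differentials are the only nonzero ones, is logically superfluous here — Corollary~\ref{kkmodp-cor:7.2} is deduced under the standing assumption of Theorem~\ref{kkmodp-thm:7.1} for the fixed $n$, and that theorem already asserts (and its proof establishes) the completeness of the differential pattern — but including it does not harm the argument.
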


\begin{proof}
The statements about the $E^3$-, $E^{2\rho(1)+1}$- and
$E^{2\rho(2)+1}$-terms are clear from
Proposition~\ref{kkmodp-prop:6.8}.  For each $m =
2, \dots, n$ we proceed by a secondary induction.  The differential
\begin{equation*}
d^{2\rho(2m-1)}(t^{p^{2m-1}-p^{2m}+i} \cdot \bar\epsilon_1)
	\doteq (t\mu_2)^{\rho(2m-3)} \cdot t^i
\end{equation*}
for $v_p(i) = 2m-2$ is non-trivial only on the summand
\begin{equation*}
E(u_n, \bar\epsilon_1, \lambda_2) \otimes P(t^{\pm p^{2m-2}}, t\mu_2)
\end{equation*}
of the $E^{2\rho(2m-2)+1} = E^{2\rho(2m-1)}$-term, with homology
\begin{equation*}
\begin{aligned}
&E(u_n, \lambda_2) \otimes \F_p\{t^j \mid j\in\Z,\, v_p(j) = 2m-2\} \otimes
	P_{\rho(2m-3)}(t\mu_2) \\
&\quad \oplus E(u_n, \bar\epsilon_1, \lambda_2) \otimes P(t^{\pm
	p^{2m-1}}, t\mu_2) \,.
\end{aligned}
\end{equation*}
This gives the stated $E^{2\rho(2m-1)+1}$-term.  Similarly, the differential
\begin{equation*}
d^{2\rho(2m)}(t^{p^{2m-1}-p^{2m}})
	\doteq \lambda_2 \cdot t^{p^{2m-1}} \cdot (t\mu_2)^{\rho(2m-2)}
\end{equation*}
is non-trivial only on the summand
\begin{equation*}
E(u_n, \bar\epsilon_1, \lambda_2) \otimes P(t^{\pm p^{2m-1}}, t\mu_2)
\end{equation*}
of the $E^{2\rho(2m-1)+1} = E^{2\rho(2m)}$-term, with homology
\begin{equation*}
\begin{aligned}
&E(u_n, \bar\epsilon_1) \otimes \F_p\{t^j \lambda_2 \mid
j\in\Z,\, v_p(j) = 2m-1\}
	\otimes P_{\rho(2m-2)}(t\mu_2) \\
&\quad \oplus E(u_n, \bar\epsilon_1, \lambda_2) \otimes P(t^{\pm
	p^{2m}}, t\mu_2)
\,.
\end{aligned}
\end{equation*}
This gives the stated $E^{2\rho(2m)+1}$-term.  The final differential
\begin{equation*}
d^{2\rho(2n)+1}(u_n \cdot t^{-p^{2n}}) \doteq (t\mu_2)^{\rho(2n-2)+1}
\end{equation*}
is non-trivial only on the summand
\begin{equation*}
E(u_n, \bar\epsilon_1, \lambda_2) \otimes P(t^{\pm p^{2n}}, t\mu_2)
\end{equation*}
of the $E^{2\rho(2n)+1}$-term, with homology
\begin{equation*}
E(\bar\epsilon_1, \lambda_2) \otimes P(t^{\pm p^{2n}}) \otimes
	P_{\rho(2n-2)+1}(t\mu_2)
\,.
\end{equation*}
This gives the stated $E^{2\rho(2n)+2}$-term.  At this stage there is
no room for any further differentials, since the spectral sequence is
concentrated in a narrower horizontal band than the vertical height of
the following differentials.
\end{proof}

Next we compare the $C_{p^n}$-Tate spectral sequence with the
$C_{p^n}$-homotopy fixed point spectral sequence obtained by restricting the
$E^2$-term to the second quadrant ($s\le0$, $t\ge0$).  It is
algebraically easier to handle the latter after inverting $\mu_2$,
which can be interpreted as comparing $\thh(\ell/p)$ with its
$C_p$-Tate construction.

In general, there is a commutative diagram
\begin{equation}
\xymatrix{
\thh(B)^{C_{p^n}} \ar[r]^-R \ar[d]^{\Gamma_n}
  & \thh(B)^{C_{p^{n-1}}} \ar[r]^-{\Gamma_{n-1}} \ar[d]^{\hat\Gamma_n}
  & \thh(B)^{hC_{p^{n-1}}} \ar[d]^{\hat\Gamma_1^{hC_{p^{n-1}}}} \\
\thh(B)^{hC_{p^n}} \ar[r]^-{R^h}
  & \thh(B)^{tC_{p^n}} \ar[r]^-{G_{n-1}}
  & (\rho_p^* \thh(B)^{tC_p})^{hC_{p^{n-1}}} \,.
}
\label{kkmodp-eq:7.3}
\end{equation}
Here $\rho_p^* \thh(B)^{tC_p}$
is a notation for  the $S^1$-spectrum obtained from the
$S^1/C_p$-spectrum 
$\thh(B)^{tC_p}$ via the $p$-th root isomorphism
$\rho_p \: S^1\to S^1/C_p$, and $G_{n-1}$ is the comparison map from the
$C_{p^{n-1}}$-fixed points to the $C_{p^{n-1}}$-homotopy fixed points
of $\rho_p^* \thh(B)^{tC_p}$, in view of the identification
\begin{equation*}
	(\rho_p^*\thh(B)^{tC_p})^{C_{p^{n-1}}} = \thh(B)^{tC_{p^n}} \,.
\end{equation*}

We are of course considering the case $B = \ell/p$.  In $V(1)$-homotopy
all four maps with labels containing $\Gamma$ are $(2p-2)$-coconnected, by
Corollary~\ref{kkmodp-cor:6.10}, so $G_{n-1}$ is at least $(2p-1)$-coconnected.
(We shall see in Lemma~\ref{kkmodp-lem:7.11} that $V(1)_* G_{n-1}$ is an
isomorphism in all degrees.)
By Proposition~\ref{kkmodp-prop:6.9} the map $\hat\Gamma_1$ precisely inverts $\mu_2$, so
the $E^2$-term of the $C_{p^n}$-homotopy fixed point spectral sequence in
$V(1)$-homotopy for $\thh(\ell/p)^{tC_p}$ is obtained by inverting $\mu_2$
in $E^2(C_{p^n}, \ell/p)$.  We denote this
spectral sequence by $\mu_2^{-1} E^*(C_{p^n},
\ell/p)$, even though in later terms only a power of $\mu_2$ is present.

\begin{theorem}\label{kkmodp-thm:7.4}
The $C_{p^n}$-homotopy fixed point spectral sequence 
$\mu_2^{-1}E^*(C_{p^n}, \ell/p)$ in 
$V(1)$-homotopy for $\thh(\ell/p)^{tC_p}$ begins
\begin{equation*}
\mu_2^{-1} E^2(C_{p^n}, \ell/p) = E(u_n, \lambda_2) \otimes \F_p\{1,
  \epsilon_0, \mu_0, \epsilon_0 \mu_0, \dots, \mu_0^{p-1}, \bar\epsilon_1\}
  \otimes P(t, \mu_2^{\pm1})
\end{equation*}
and converges to $V(1)_* (\rho_p^* \thh(\ell/p)^{tC_p})^{hC_{p^n}}$, which
receives a $(2p-2)$-coconnected map $(\hat\Gamma_1)^{hC_{p^n}}$ from
$V(1)_* \thh(\ell/p)^{hC_{p^n}}$.
There is an initial $d^2$-differential generated by
\begin{equation*}
d^2(\epsilon_0 \mu_0^{i-1}) = t \mu_0^i
\end{equation*}
for $0<i<p$.
Next, there are $2n$ families of even length differentials generated by
\begin{equation*}
d^{2\rho(2k-1)}(\mu_2^{p^{2k}-p^{2k-1}+j} \cdot \bar\epsilon_1)
	\doteq (t\mu_2)^{\rho(2k-1)} \cdot \mu_2^j
\end{equation*}
for $v_p(j) = 2k-2$, for each $k = 1, \dots, n$, and
\begin{equation*}
d^{2\rho(2k)}(\mu_2^{p^{2k}-p^{2k-1}})
	\doteq \lambda_2 \cdot \mu_2^{-p^{2k-1}} \cdot (t\mu_2)^{\rho(2k)}
\end{equation*}
for each $k = 1, \dots, n$.
Finally, there is a differential of odd length generated by
\begin{equation*}
d^{2\rho(2n)+1}(u_n \cdot \mu_2^{p^{2n}}) \doteq (t\mu_2)^{\rho(2n)+1} \,.
\end{equation*}
\end{theorem}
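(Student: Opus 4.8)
The plan is to prove Theorem~\ref{kkmodp-thm:7.4} along the same lines as Theorem~\ref{kkmodp-thm:7.1}, comparing the $C_{p^n}$-homotopy fixed point spectral sequence for $\thh(\ell/p)^{tC_p}$ with the $C_{p^n}$-Tate spectral sequence for $\thh(\ell/p)$ of Theorem~\ref{kkmodp-thm:7.1} --- through the comparison maps of diagram~\eqref{kkmodp-eq:7.3}, including $\hat\Gamma_1^{hC_{p^{n-1}}}$ --- and with the analogous $\mu_2$-inverted spectral sequence for $\thh(\ell)$ from \cite{kkmodp-AR02}*{\S5}.

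First I would record the $E^2$-term, settle convergence, and dispose of the easy differentials. By Proposition~\ref{kkmodp-prop:6.9}, $V(1)_*\thh(\ell/p)^{tC_p} = \F_p\{1, \epsilon_0, \mu_0, \epsilon_0\mu_0, \dots, \mu_0^{p-1}, \bar\epsilon_1\} \otimes E(\lambda_2) \otimes P(\mu_2^{\pm1})$ with trivial $C_{p^n}$-action, so the homotopy fixed point $E^2$-term is $E(u_n)\otimes P(t)$ tensored with this group, as claimed; conditional convergence is automatic. The map $\hat\Gamma_1 \: \thh(\ell/p) \to \thh(\ell/p)^{tC_p}$ is $(2p-2)$-coconnected in $V(1)$-homotopy by Proposition~\ref{kkmodp-prop:6.9}, hence so is $(\hat\Gamma_1)^{hC_{p^n}}$: since $C_{p^n}$ is finite, $V(1)\wedge(-)^{hC_{p^n}}$ agrees with $(V(1)\wedge -)^{hC_{p^n}}$ and carries $(2p-3)$-coconnected spectra to $(2p-3)$-coconnected ones. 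The first differential is $d^2(x) = t\cdot\sigma x$, and from~\eqref{kkmodp-eq:5.1} one reads off $\sigma(\epsilon_0\mu_0^{i-1}) = \mu_0^i$ for $0<i<p$, all other $\sigma$-values on module generators being zero, while $\sigma(\bar\epsilon_1) = 0$ by Lemma~\ref{kkmodp-lem:6.4}; this gives the stated $d^2$. Note that, unlike in the Tate sequence, $t$ is not invertible here, so the classes $\mu_0^i$ for $1\le i\le p-1$ survive to $E^3$. The classes $\lambda_2$, $t\mu_2$ and $\bar\epsilon_1$, hence their products, are infinite cycles, exactly as recorded just before Theorem~\ref{kkmodp-thm:7.1}.

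The crux is the family of higher differentials. I would first pin down the ``$\ell$-part'' differentials $d^{2\rho(2k)}(\mu_2^{p^{2k}-p^{2k-1}}) \doteq \lambda_2\mu_2^{-p^{2k-1}}(t\mu_2)^{\rho(2k)}$ for $k=1,\dots,n$ and $d^{2\rho(2n)+1}(u_n\mu_2^{p^{2n}}) \doteq (t\mu_2)^{\rho(2n)+1}$ from the module structure over the $C_{p^n}$-homotopy fixed point spectral sequence of $\thh(\ell)^{tC_p}$, whose differential pattern is part of the analysis of \cite{kkmodp-AR02}*{\S5} (alternatively, these can be derived directly by the argument of Corollary~\ref{kkmodp-cor:7.2} applied to the $\ell$-subquotient). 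The differentials on the classes $\mu_2^j\bar\epsilon_1$ are then forced as in the proof of Proposition~\ref{kkmodp-prop:6.8}: multiplication by $v_2$, represented by $t\mu_2$, acts trivially on $V(1)_*\thh(\ell/p)^{tC_p}$ by Proposition~\ref{kkmodp-prop:5.4} and the module structure, so every surviving class of low filtration --- in particular each permanent cycle of $\mu_0^i$- or $\epsilon_0\mu_0^i$-type --- becomes a boundary after multiplication by a power of $t\mu_2$; since the target of such a relation lies in an odd column (a factor $u_n$) it cannot be hit by the odd-length differential $d^{2\rho(2n)+1}$, whose image lies in even columns, and by Frobenius naturality no class in the image of the $S^1$-homotopy fixed point spectral sequence (which is concentrated in even columns) supports a differential of odd length --- exactly as in Lemma~\ref{kkmodp-lem:6.6} --- so the only remaining possibility is a $d^{2\rho(2k-1)}$ originating on $\mu_2^{p^{2k}-p^{2k-1}+j}\bar\epsilon_1$ with $v_p(j) = 2k-2$. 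Naturality along $\pi \: \ell/p \to H\Z/p$ with Lemma~\ref{kkmodp-lem:6.7} and along $(\hat\Gamma_1)^{hC_{p^n}}$ then pins down the unit coefficients, and a bidegree count as in Corollary~\ref{kkmodp-cor:7.2} closes off the $E^{2\rho(2n)+1}$-page.

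The hard part will be the inductive bookkeeping across the $2n$ families of differentials, made delicate by the change of variables relative to Theorem~\ref{kkmodp-thm:7.1}: the role played by negative powers of $t$ in $\hat E^*(C_{p^n},\ell/p)$ is taken here, up to powers of $t\mu_2$, by $\mu_2$ --- a reflection of the fact that $\hat\Gamma_1$ detects $\mu_2$ by $t^{-p^2}$ in the $C_p$-Tate spectral sequence (Proposition~\ref{kkmodp-prop:6.9}). One must check at each stage that the surviving classes $\mu_0^i$ ($1\le i\le p-1$) and the extra powers of $\mu_2$ do not open room for spurious differentials of the wrong length; as in the proof of Proposition~\ref{kkmodp-prop:6.8}, this is precisely where the evenness of the $S^1$-spectral sequences, accessed through the Tate and homotopy Frobenius maps, does the essential work.
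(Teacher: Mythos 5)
Your proposal diverges substantially from the paper's actual proof, and the divergence opens a genuine gap. The paper's proof of Theorem~\ref{kkmodp-thm:7.4} is a short naturality argument: both $\mu_2^{-1}E^*(C_{p^n},\ell/p)$ and $\hat E^*(C_{p^n},\ell/p)$ receive maps from the (un-localized) $C_{p^n}$-homotopy fixed point spectral sequence $E^*(C_{p^n},\ell/p)$ --- the first map, $\hat\Gamma_1^{hC_{p^n}}$, inverting $\mu_2$ at the $E^2$-level, the second, $R^h$, inverting $t$. Since the differentials in $\hat E^*(C_{p^n},\ell/p)$ are already known from Theorem~\ref{kkmodp-thm:7.1} and $t\mu_2$ (the image of $v_2$) is an infinite cycle in all three, the entire differential pattern of Theorem~\ref{kkmodp-thm:7.4} is obtained simply by pulling the known differentials back to $E^*(C_{p^n},\ell/p)$ and pushing them forward. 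There is no re-derivation from scratch.

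The critical flaw in your plan is the transposition of the $v_2$-triviality argument from Proposition~\ref{kkmodp-prop:6.8} to the present setting. In Proposition~\ref{kkmodp-prop:6.8} the spectral sequence $\hat E^*(C_p,\ell/p)$ abuts to $V(1)_*\thh(\ell/p)^{tC_p}$, on which $v_2$ genuinely acts as zero (by the degree argument for $\thh(\ell)^{tC_p}$ and the module structure), so products $t\mu_2\cdot(\text{permanent cycle})$ must eventually die. But $\mu_2^{-1}E^*(C_{p^n},\ell/p)$ does not abut to $V(1)_*\thh(\ell/p)^{tC_p}$; it abuts to $V(1)_*(\rho_p^*\thh(\ell/p)^{tC_p})^{hC_{p^n}}$, which is $(2p-2)$-coconnectedly equivalent to $V(1)_*\thh(\ell/p)^{hC_{p^n}}$. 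On that abutment $v_2$ acts \emph{non}-trivially --- indeed the answer in Corollary~\ref{kkmodp-cor:7.5} is built out of truncated polynomial algebras $P_{\rho(\cdot)}(t\mu_2)$ on which multiplication by $v_2=t\mu_2$ is the central structural datum. So the inference ``$v_2$ acts trivially $\Rightarrow$ these classes become boundaries'' has no force here, and the family of differentials $d^{2\rho(2k-1)}$ on $\mu_2^{p^{2k}-p^{2k-1}+j}\bar\epsilon_1$ is not actually forced by your argument. A secondary symptom of the same confusion: the classes $t\mu_2\cdot\mu_0^i$ are already zero at $E^3$ (they equal $\mu_2\cdot d^2(\epsilon_0\mu_0^{i-1})$), so the permanent cycles you invoke do not produce the constraints you want even formally. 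Your first paragraph (the $E^2$-term, convergence, coconnectedness of $(\hat\Gamma_1)^{hC_{p^n}}$ via $V(1)\wedge(-)^{hC_{p^n}}\simeq(V(1)\wedge-)^{hC_{p^n}}$, and the $d^2$) is sound, but the higher differentials should be deduced by naturality from Theorem~\ref{kkmodp-thm:7.1} along $E^*(C_{p^n},\ell/p)\to\hat E^*(C_{p^n},\ell/p)$ and $E^*(C_{p^n},\ell/p)\to\mu_2^{-1}E^*(C_{p^n},\ell/p)$, not re-derived.
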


\begin{proof}
The differential pattern follows from Theorem~\ref{kkmodp-thm:7.1} by naturality with
respect to the maps of spectral sequences
\begin{equation*}
\mu_2^{-1} E^*(C_{p^n}, \ell/p)
\xl{\hat\Gamma_1^{hC_{p^n}}}
E^*(C_{p^n}, \ell/p)
\xr{R^h}
\hat E^*(C_{p^n}, \ell/p)
\end{equation*}
induced by $\hat\Gamma_1^{hC_{p^n}}$ and $R^h$.  The first inverts
$\mu_2$ and the second inverts $t$, at the level of $E^2$-terms.
We are also using that $t\mu_2$, the image of $v_2$, multiplies as an
infinite cycle in all of these spectral sequences.
\end{proof}

\begin{corollary}\label{kkmodp-cor:7.5}
The initial differential in the $C_{p^n}$-homotopy fixed point spectral
sequence in $V(1)$-homotopy for $\thh(\ell/p)^{tC_p}$ leaves
\begin{equation*}
\begin{aligned}
\mu_2^{-1} E^3(C_{p^n}, \ell/p) &= E(u_n, \lambda_2) \otimes
	\F_p\{\mu_0^i \mid 0<i<p\} \otimes P(\mu_2^{\pm1}) \\
&\qquad \oplus E(u_n, \bar\epsilon_1, \lambda_2)
        \otimes P(\mu_2^{\pm1}, t\mu_2) \,.
\end{aligned}
\end{equation*}
The next $2n$ families of differentials leave the intermediate terms
\begin{equation*}
\begin{aligned}
\mu_2^{-1} E^{2\rho(2m-1)+1}&(C_{p^n}, \ell/p) = E(u_n, \lambda_2)
	\otimes \F_p\{\mu_0^i \mid 0<i<p\} \otimes P(\mu_2^{\pm1}) \\
&\oplus \bigoplus_{k=1}^m E(u_n, \lambda_2) \otimes
        \F_p\{\mu_2^j \mid j\in\Z,\, v_p(j) = 2k-2\} \otimes P_{\rho(2k-1)}(t\mu_2) \\
&\oplus \bigoplus_{k=1}^{m-1} E(u_n, \bar\epsilon_1) \otimes
        \F_p\{\lambda_2 \mu_2^j \mid j\in\Z,\, v_p(j) = 2k-1\} \otimes
        P_{\rho(2k)}(t\mu_2) \\
&\quad \oplus E(u_n, \bar\epsilon_1, \lambda_2) \otimes P(\mu_2^{\pm
        p^{2m-1}}, t\mu_2)
\end{aligned}
\end{equation*}
and
\begin{equation*}
\begin{aligned}
\mu_2^{-1} E^{2\rho(2m)+1}&(C_{p^n}, \ell/p) = E(u_n, \lambda_2)
	\otimes \F_p\{\mu_0^i \mid 0<i<p\} \otimes P(\mu_2^{\pm1}) \\
&\oplus \bigoplus_{k=1}^m E(u_n, \lambda_2) \otimes
        \F_p\{\mu_2^j \mid j\in\Z,\, v_p(j) = 2k-2\} \otimes P_{\rho(2k-1)}(t\mu_2) \\
&\oplus \bigoplus_{k=1}^m E(u_n, \bar\epsilon_1) \otimes
	\F_p\{\lambda_2 \mu_2^j \mid j\in\Z,\, v_p(j) = 2k-1\} \otimes
	P_{\rho(2k)}(t\mu_2) \\
&\quad \oplus E(u_n, \bar\epsilon_1, \lambda_2) \otimes P(\mu_2^{\pm
        p^{2m}}, t\mu_2)
\end{aligned}
\end{equation*}
for $m = 1, \dots, n$.  The final differential leaves the
$E^{2\rho(2n)+2} = E^\infty$-term, equal to
\begin{equation*}
\begin{aligned}
\mu_2^{-1} E^\infty&(C_{p^n}, \ell/p) = E(u_n, \lambda_2) \otimes
	\F_p\{\mu_0^i \mid 0<i<p\} \otimes P(\mu_2^{\pm1}) \\
&\quad \oplus \bigoplus_{k=1}^n E(u_n, \lambda_2) \otimes
	\F_p\{\mu_2^j \mid j\in\Z,\, v_p(j) = 2k-2\} \otimes P_{\rho(2k-1)}(t\mu_2) \\
&\quad \oplus \bigoplus_{k=1}^n E(u_n, \bar\epsilon_1) \otimes
	\F_p\{\lambda_2 \mu_2^j \mid j\in\Z,\, v_p(j) = 2k-1\} \otimes
	P_{\rho(2k)}(t\mu_2) \\
&\qquad \oplus E(\bar\epsilon_1, \lambda_2) \otimes P(\mu_2^{\pm
	p^{2n}}) \otimes P_{\rho(2n)+1}(t\mu_2) \,.
\end{aligned}
\end{equation*}
\end{corollary}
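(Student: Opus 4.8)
The plan is to adapt the proof of Corollary~\ref{kkmodp-cor:7.2}, replacing the differentials of Theorem~\ref{kkmodp-thm:7.1} by those of Theorem~\ref{kkmodp-thm:7.4} and computing the homology of $\mu_2^{-1}E^*(C_{p^n},\ell/p)$ page by page. All the differentials have already been produced in Theorem~\ref{kkmodp-thm:7.4}, by naturality along $\hat\Gamma_1^{hC_{p^n}}$ and $R^h$, so only the successive homology computations remain, and each such computation is the homology of the relevant polynomial piece with respect to a single differential $d^r(x)\doteq y$.

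First I would handle the initial $d^2$. Since $u_n$, $\lambda_2$, $\mu_2$ are $\mu_2^{-1}E^*(C_{p^n},\ell)$-linear $d^2$-cycles and $t$ is a $d^2$-cycle for degree reasons, $d^2$ is $E(u_n,\lambda_2)\otimes P(t,\mu_2^{\pm1})$-linear and is determined by $d^2(\epsilon_0\mu_0^{i-1})=t\mu_0^i$ for $0<i<p$, with $1$, the $\mu_0^i$ and $\bar\epsilon_1$ being $d^2$-cycles. The crucial difference from the $C_{p^n}$-Tate situation is that here $t$ is only a polynomial generator, so $d^2$ kills $\epsilon_0\mu_0^{i-1}$ and all $t^a\mu_0^i$ with $a\ge1$, $0<i<p$, but leaves the bare classes $\mu_0^i$ ($0<i<p$) alive. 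Using $P(\mu_2^{\pm1})\otimes P(t)\cong P(\mu_2^{\pm1},t\mu_2)$, this gives exactly the stated $\mu_2^{-1}E^3$, split as the ``extra'' summand $E(u_n,\lambda_2)\otimes\F_p\{\mu_0^i\mid 0<i<p\}\otimes P(\mu_2^{\pm1})$ together with the summand $E(u_n,\bar\epsilon_1,\lambda_2)\otimes P(\mu_2^{\pm1},t\mu_2)$ generated by $1$ and $\bar\epsilon_1$.

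Next I would observe that the ``extra'' summand is a block of permanent cycles: the $\mu_0^i$ are $d^2$-cycles, and Theorem~\ref{kkmodp-thm:7.4} neither names any $u_n^\delta\lambda_2^\varepsilon\mu_0^i\mu_2^j$ as the source of a differential, nor lands a differential on one, so this summand passes untouched to $\mu_2^{-1}E^\infty$. (That no differential can propagate to $\mu_0^i\mu_2^j$ from the named differentials on $\mu_2^{p^{2k}-p^{2k-1}}$ reflects that $\mu_2^{-1}E^*(C_{p^n},\ell/p)$ is only a module spectral sequence over $\mu_2^{-1}E^*(C_{p^n},\ell)$: the $\lambda_2$- and $u_n$-differentials are new to the $\ell/p$-module, the corresponding $\ell$-classes having already fallen against $\lambda_1$, which acts trivially on $V(1)_*\thh(\ell/p)$ by Proposition~\ref{kkmodp-prop:5.4}, so they are carried on single basis elements without Leibniz partners on the $\mu_0^i$-multiples.) On the complementary summand $E(u_n,\bar\epsilon_1,\lambda_2)\otimes P(\mu_2^{\pm1},t\mu_2)$ I would then run the same secondary induction on $m=1,\dots,n$ as in Corollary~\ref{kkmodp-cor:7.2}: at stage $m$, the differential $d^{2\rho(2m-1)}(\mu_2^{p^{2m}-p^{2m-1}+j}\bar\epsilon_1)\doteq(t\mu_2)^{\rho(2m-1)}\mu_2^j$ with $v_p(j)=2m-2$ truncates the $(t\mu_2)$-tower over $\mu_2^j$ to height $\rho(2m-1)$, producing $E(u_n,\lambda_2)\otimes\F_p\{\mu_2^j\mid v_p(j)=2m-2\}\otimes P_{\rho(2m-1)}(t\mu_2)$, and $d^{2\rho(2m)}(\mu_2^{p^{2m}-p^{2m-1}})\doteq\lambda_2\mu_2^{-p^{2m-1}}(t\mu_2)^{\rho(2m)}$ likewise produces $E(u_n,\bar\epsilon_1)\otimes\F_p\{\lambda_2\mu_2^j\mid v_p(j)=2m-1\}\otimes P_{\rho(2m)}(t\mu_2)$; each step is the same truncated-polynomial homology calculation as in Corollary~\ref{kkmodp-cor:7.2}. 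Finally $d^{2\rho(2n)+1}(u_n\mu_2^{p^{2n}})\doteq(t\mu_2)^{\rho(2n)+1}$ leaves $E(\bar\epsilon_1,\lambda_2)\otimes P(\mu_2^{\pm p^{2n}})\otimes P_{\rho(2n)+1}(t\mu_2)$ on that summand, and, just as at the end of the proof of Corollary~\ref{kkmodp-cor:7.2}, the surviving classes are concentrated in a horizontal band too narrow to admit any further differential. Assembling the two summands at each stage yields the stated intermediate terms and $\mu_2^{-1}E^\infty$.

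The main obstacle will be the bookkeeping in the secondary induction --- keeping the valuation conditions $v_p(j)=2k-2$ and $v_p(j)=2k-1$ and the truncation heights $\rho(2k-1)$, $\rho(2k)$, $\rho(2n)+1$ synchronized across the two even families and the final odd differential --- together with making the ``permanent cycle'' claim for the $\mu_0^i$-summand precise, which is exactly the point at which one must invoke that Theorem~\ref{kkmodp-thm:7.4} accounts for every differential and that the module (rather than algebra) structure over $\mu_2^{-1}E^*(C_{p^n},\ell)$ prevents those differentials from reaching the $\mu_0^i$-multiples.
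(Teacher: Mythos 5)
Your proof follows the paper's approach: compute $E^3$ from the $d^2$-differential (using $P(\mu_2^{\pm1})\otimes P(t)\cong P(\mu_2^{\pm1},t\mu_2)$ to split off the $\mu_0^i$-block), observe that the later differentials of Theorem~\ref{kkmodp-thm:7.4} act only on the $E(u_n,\bar\epsilon_1,\lambda_2)\otimes P(\mu_2^{\pm1},t\mu_2)$ summand, and run the same secondary induction on $m$ as in Corollary~\ref{kkmodp-cor:7.2}, ending with the narrow-band degeneration argument --- this is exactly what the paper does.

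One caveat worth correcting: the parenthetical explanation you offer for why the $\mu_0^i$-summand is untouched is off. The $d^{2\rho(2k)}$- and $d^{2\rho(2n)+1}$-differentials are \emph{not} ``new to the $\ell/p$-module''; they are inherited from the spectral sequence for $\ell$, exactly as in Proposition~\ref{kkmodp-prop:6.8} (only the $d^{2p}$-differential on $t^{1-p}$ disappears, because its target $t\lambda_1$ vanishes). The actual reason the module structure does not propagate differentials onto or off the $\mu_0^i$-block is simpler: on the target side, the $C_{p^n}$-homotopy fixed point spectral sequence lives in columns $s\le0$ while that block occupies columns $0$ and $-1$, so nothing can map into it for $r\ge2$; on the source side, the Leibniz rule would produce targets of the form $(t\mu_2)^e\mu_0^i$ or $\lambda_2(t\mu_2)^e\mu_0^i$ with $e\ge1$, and these are already zero at $E^3$ since $(t\mu_2)^e\mu_0^i=\mu_2^e t^e\mu_0^i$ lies in the image of $d^2(\mu_2^e t^{e-1}\epsilon_0\mu_0^{i-1})$. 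Also a minor slip of wording: at $E^{2\rho(2n)+2}$ the relevant bound in this spectral sequence is on the \emph{vertical band} (column range), not the horizontal band as in the Tate case.
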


\begin{proof}
The computation of the $E^3$-term from the $E^2$-term is straightforward.
The rest of the proof goes by a secondary induction on $m = 1, \dots,
n$, very much like the proof of Corollary~\ref{kkmodp-cor:7.2}.  The differential
\begin{equation*}
d^{2\rho(2m-1)}(\mu_2^{p^{2m}-p^{2m-1}+j} \cdot \bar\epsilon_1)
	\doteq (t\mu_2)^{\rho(2m-1)} \cdot \mu_2^j
\end{equation*}
for $v_p(j) = 2m-2$ is non-trivial only on the summand
\begin{equation*}
E(u_n, \bar\epsilon_1, \lambda_2) \otimes P(\mu_2^{\pm p^{2m-2}}, t\mu_2)
\end{equation*}
of the $E^3 = E^{2\rho(1)}$-term (for $m=1$), resp.~the
$E^{2\rho(2m-2)+1} = E^{2\rho(2m-1)}$-term (for $m = 2, \dots, n$).
Its homology is
\begin{equation*}
\begin{aligned}
&E(u_n, \lambda_2) \otimes \F_p\{ \mu_2^j \mid j\in\Z,\, v_p(j) = 2m-2 \} \otimes
	P_{\rho(2m-1)}(t\mu_2) \\
&\quad\oplus E(u_n, \bar\epsilon_1, \lambda_2) \otimes P(\mu_2^{\pm
	p^{2m-1}}, t\mu_2) \,,
\end{aligned}
\end{equation*}
which gives the stated $E^{2\rho(2m-1)+1}$-term.
The differential
\begin{equation*}
d^{2\rho(2m)}(\mu_2^{p^{2m}-p^{2m-1}}) \doteq \lambda_2 \cdot \mu_2^{-p^{2m-1}}
	\cdot (t\mu_2)^{\rho(2m)}
\end{equation*}
is non-trivial only on the summand
\begin{equation*}
E(u_n, \bar\epsilon_1, \lambda_2) \otimes P(\mu_2^{\pm p^{2m-1}}, t\mu_2)
\end{equation*}
of the $E^{2\rho(2m-1)+1} = E^{2\rho(2m)}$-term, leaving
\begin{equation*}
\begin{aligned}
&E(u_n, \bar\epsilon_1) \otimes \F_p\{ \lambda_2 \mu_2^j
        \mid j\in\Z,\, v_p(j) = 2m-1 \} \otimes P_{\rho(2m)}(t\mu_2) \\
&\quad\oplus E(u_n, \bar\epsilon_1, \lambda_2) \otimes P(\mu_2^{\pm p^{2m}},
        t\mu_2) \,.
\end{aligned}
\end{equation*}
This gives the stated $E^{2\rho(2m)+1}$-term.
The final differential
\begin{equation*}
d^{2\rho(2n)+1}(u_n \cdot \mu_2^{p^{2n}}) \doteq (t\mu_2)^{\rho(2n)+1}
\end{equation*}
is non-trivial only on the summand
\begin{equation*}
E(u_n, \bar\epsilon_1, \lambda_2) \otimes P(\mu_2^{\pm p^{2n}}, t\mu_2)
\end{equation*}
of the $E^{2\rho(2n)+1}$-term, with homology
\begin{equation*}
E(\bar\epsilon_1, \lambda_2) \otimes P(\mu_2^{\pm p^{2n}}) \otimes
        P_{\rho(2n)+1}(t\mu_2)\,.
\end{equation*}
This gives the stated $E^{2\rho(2n)+2}$-term.  There is
no room for any further differentials, since the spectral sequence is
concentrated in a narrower vertical band than the horizontal width of
the following differentials, so $E^{2\rho(2n)+2} = E^\infty$.
\end{proof}

\begin{proof}[Proof of Theorem~\ref{kkmodp-thm:7.1}]
To make the inductive step to $C_{p^{n+1}}$, we use that
the first $d^r$-differential of odd length in $\hat E^*(C_{p^n}, \ell/p)$
occurs for $r = r_0 = 2\rho(2n)+1$.  It follows from
\cite{kkmodp-AR02}*{Lem.~5.2}
that the terms $\hat E^r(C_{p^n}, \ell/p)$ and $\hat E^r(C_{p^{n+1}},
\ell/p)$ are isomorphic for $r \le 2\rho(2n)+1$, via the Frobenius
map (taking $t^i$ to $t^i$) in even columns and the Verschiebung map
(taking $u_n t^i$ to $u_{n+1} t^i$) in odd columns.  Furthermore, the
differential $d^{2\rho(2n)+1}$ is zero in the latter spectral sequence.
This proves the part of Theorem~\ref{kkmodp-thm:7.1} for $n+1$ that concerns the
differentials leading up to the term
\begin{equation}
\begin{aligned}
\hat E&^{2\rho(2n)+2}(C_{p^{n+1}}, \ell/p) = E(u_{n+1}, \lambda_2) \otimes
        \F_p\{t^{-i} \mid 0<i<p\} \otimes P(t^{\pm p^2}) \\
&\quad \oplus \bigoplus_{k=2}^n E(u_{n+1}, \lambda_2) \otimes
        \F_p\{t^j \mid j\in\Z,\, v_p(j) = 2k-2\} \otimes P_{\rho(2k-3)}(t\mu_2) \\
&\quad \oplus \bigoplus_{k=2}^n E(u_{n+1}, \bar\epsilon_1) \otimes
        \F_p\{t^j \lambda_2 \mid j\in\Z,\, v_p(j) = 2k-1\} \otimes
        P_{\rho(2k-2)}(t\mu_2) \\
&\qquad \oplus E(u_{n+1}, \bar\epsilon_1, \lambda_2) \otimes P(t^{\pm
        p^{2n}}, t\mu_2) \,.
\end{aligned}
\label{kkmodp-eq:7.6}
\end{equation}

Next we use the following commutative diagram, where we abbreviate
$\thh(B)$ to $T(B)$ for typographical reasons:
\begin{equation}
\xymatrix{
(\rho_p^* T(B)^{tC_p})^{hC_{p^n}} \ar[d]^F
& T(B)^{hC_{p^n}} \ar[l]_-{\hat\Gamma_1^{hC_{p^n}}} \ar[d]^F
& T(B)^{C_{p^n}} \ar[l]_{\Gamma_n} \ar[r]^-{\hat\Gamma_{n+1}} \ar[d]^F
& T(B)^{tC_{p^{n+1}}} \ar[d]^F \\
\rho_p^* T(B)^{tC_p}
& T(B) \ar[l]_{\hat\Gamma_1}
& T(B) \ar@{=}[l] \ar[r]^{\hat\Gamma_1}
& \rho_p^* T(B)^{tC_p}\,.
}
\label{kkmodp-eq:7.7}
\end{equation}
The horizontal maps all induce $(2p-2)$-coconnected maps in
$V(1)$-homotopy for $B = \ell/p$.  Each $F$ is a Frobenius map,
forgetting invariance under a $C_{p^n}$-action.
Thus the map $\hat\Gamma_{n+1}$ to
the right induces an isomorphism of $E(\lambda_2) \otimes P(v_2)$-modules
in all degrees $* > (2p-2)$ from $V(1)_* \thh(\ell/p)^{C_{p^n}}$, implicitly
identified to the left with the abutment of $\mu_2^{-1} E^*(C_{p^n},
\ell/p)$, to $V(1)_* \thh(\ell/p)^{tC_{p^{n+1}}}$, which is the abutment
of $\hat E^*(C_{p^{n+1}}, \ell/p)$.  The diagram above ensures that the
isomorphism induced by $\hat\Gamma_{n+1}$ is compatible with the one
induced by $\hat\Gamma_1$.  By Proposition~\ref{kkmodp-prop:6.9} 
it takes $\bar\epsilon_1$,
$\lambda_2$ and $\mu_2$ to $\bar\epsilon_1$, $\lambda_2$ and $t^{-p^2}$
up to a unit factor in $\F_p$,
respectively, and similarly for monomials in these classes.

We focus on the summand
\begin{equation*}
E(u_n, \lambda_2) \otimes \F_p\{\mu_2^j \mid j\in\Z,\, v_p(j) = 2n-2\}
\otimes P_{\rho(2n-1)}(t\mu_2)
\end{equation*}
in $\mu_2^{-1} E^\infty(C_{p^n}, \ell/p)$, abutting to $V(1)_*
\thh(\ell/p)^{C_{p^n}}$ in degrees $> (2p-2)$.  In the $P(v_2)$-module
structure on the abutment, each class $\mu_2^j$ with $v_p(j) = 2n-2$,
$j>0$, generates a copy of $P_{\rho(2n-1)}(v_2)$, since there are no
permanent cycles in the same total degree as $y = (t\mu_2)^{\rho(2n-1)}
\cdot \mu_2^j$ that have lower (= more negative) homotopy fixed point
filtration.  See Lemma~\ref{kkmodp-lem:7.8} below for the elementary verification.  The
$P(v_2)$-module isomorphism induced by $\hat\Gamma_{n+1}$ must take this
to a copy of $P_{\rho(2n-1)}(v_2)$ in $V(1)_* \thh(\ell/p)^{tC_{p^{n+1}}}$,
generated by $t^{-p^2j}$.

Writing $i = -p^2j$, we deduce that for $v_p(i) = 2n$, $i<0$, the
infinite cycle $z = (t\mu_2)^{\rho(2n-1)} \cdot t^i$ must represent zero
in the abutment, and must therefore be hit by a differential $z = d^r(x)$
in the $C_{p^{n+1}}$-Tate spectral sequence.  Here $r \ge 2\rho(2n)+2$.

Since $z$ generates a free copy of $P(t\mu_2)$ in the
$E^{2\rho(2n)+2}$-term displayed in~\eqref{kkmodp-eq:7.6}, and $d^r$ is
$P(t\mu_2)$-linear, the class $x$ cannot be annihilated by any power
of $t\mu_2$.  This means that $x$ must be contained in the summand
\begin{equation*}
E(u_{n+1}, \bar\epsilon_1, \lambda_2) \otimes P(t^{\pm p^{2n}}, t\mu_2)
\end{equation*}
of $\hat E^{2\rho(2n)+2}(C_{p^{n+1}}, \ell/p)$.  By an elementary
check of bidegrees, see Lemma~\ref{kkmodp-lem:7.9} below, the only possibility is that $x$
has vertical degree $(2p-1)$, so that we have differentials
\begin{equation*}
d^{2\rho(2n+1)}(t^{p^{2n+1}-p^{2n+2}+i} \cdot \bar\epsilon_1)
	\doteq (t\mu_2)^{\rho(2n-1)} \cdot t^i
\end{equation*}
for all $i<0$ with $v_p(i) = 2n$.  The cases $i>0$ follow by the module
structure over the $C_{p^{n+1}}$-Tate spectral sequence for $\ell$.
The remaining two differentials,
\begin{equation*}
d^{2\rho(2n+2)}(t^{p^{2n+1}-p^{2n+2}})
        \doteq \lambda_2 \cdot t^{p^{2n+1}} \cdot (t\mu_2)^{\rho(2n)}
\end{equation*}
and
\begin{equation*}
d^{2\rho(2n+2)+1}(u_{n+1} \cdot t^{-p^{2n+2}}) \doteq (t\mu_2)^{\rho(2n)+1}
\end{equation*}
are also present in the $C_{p^{n+1}}$-Tate spectral sequence for
$\ell$, see \cite{kkmodp-AR02}*{Th.~6.1}, hence follow in the present case by the
module structure.  With this we have established the complete
differential pattern asserted by Theorem~\ref{kkmodp-thm:7.1}.
\end{proof}

\begin{lemma}\label{kkmodp-lem:7.8}
For $j\in\Z$ with $v_p(j) = 2n-2$, where $n\ge1$, there are no classes in $\mu_2^{-1}
E^\infty(C_{p^n}, \ell/p)$ in the same total degree as
$y = (t\mu_2)^{\rho(2n-1)} \cdot \mu_2^j$ that have lower homotopy fixed
point filtration.
\end{lemma}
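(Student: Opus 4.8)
The plan is to read off the structure of $\mu_2^{-1}E^\infty(C_{p^n},\ell/p)$ from Corollary~\ref{kkmodp-cor:7.5} and check, summand by summand, that it contains no class in the total degree of $y$ whose homotopy fixed point filtration (the coordinate $s$, so ``lower'' means ``more negative'') is strictly below that of $y$. First I would record degrees and filtrations: $t\mu_2$ has bidegree $(-2,2p^2)$ and total degree $2p^2-2$, $\mu_2$ has bidegree $(0,2p^2)$, $u_n$ has bidegree $(-1,0)$, and $\lambda_2$, $\bar\epsilon_1$, $\mu_0^i$ have bidegrees $(0,2p^2-1)$, $(0,2p-1)$, $(0,2i)$. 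Thus $y = (t\mu_2)^{\rho(2n-1)}\mu_2^j$ sits in total degree $2(p^2-1)\rho(2n-1)+2p^2 j$ and filtration $-2\rho(2n-1)$, so we must exclude classes in that total degree with $s\le -2\rho(2n-1)-1$. I would also record the elementary identity
\[
\rho(2n)-\rho(2n-1) = p^{2n-1}+p^{2n-3}+\dots+p^3+p-1 ,
\]
which in particular is smaller than $p^{2n}$; this inequality is the arithmetic heart of the argument.

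Next I would dispose of the ``safe'' summands. The summand $E(u_n,\lambda_2)\otimes\F_p\{\mu_0^i\}\otimes P(\mu_2^{\pm1})$ contains no power of $t$, so it lies in filtration $\ge -1$. In $E(u_n,\lambda_2)\otimes\F_p\{\mu_2^j\}\otimes P_{\rho(2k-1)}(t\mu_2)$ the $(t\mu_2)$-exponent is at most $\rho(2k-1)-1\le \rho(2n-1)-1$, so the filtration is $\ge 1-2\rho(2n-1)$; and in $E(u_n,\bar\epsilon_1)\otimes\F_p\{\lambda_2\mu_2^j\}\otimes P_{\rho(2k)}(t\mu_2)$ with $k<n$ the $(t\mu_2)$-exponent is $\le\rho(2k)-1\le\rho(2n-2)-1$, so the filtration is $\ge 1-2\rho(2n-2)>-2\rho(2n-1)$. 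Hence any offending class lies in $E(u_n,\bar\epsilon_1)\otimes\F_p\{\lambda_2\mu_2^{j'}\mid v_p(j')=2n-1\}\otimes P_{\rho(2n)}(t\mu_2)$ or in $E(\bar\epsilon_1,\lambda_2)\otimes P(\mu_2^{\pm p^{2n}})\otimes P_{\rho(2n)+1}(t\mu_2)$; and since $|y|$ is even while $u_n$, $\bar\epsilon_1$, $\lambda_2$ each have odd total degree, such a class is one of the four monomial types $u_n\lambda_2\mu_2^{j'}(t\mu_2)^m$, $\bar\epsilon_1\lambda_2\mu_2^{j'}(t\mu_2)^m$, $\mu_2^{p^{2n}e}(t\mu_2)^m$, $\bar\epsilon_1\lambda_2\mu_2^{p^{2n}e}(t\mu_2)^m$, with the low-filtration condition forcing $m\ge\rho(2n-1)$ in the first case and $m\ge\rho(2n-1)+1$ in the others, and always $m\le\rho(2n)$.

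For each of these four types I would equate its total degree with $|y|$, divide by $2$, and reduce modulo $p^2-1$ (which kills the $(t\mu_2)$-exponent, using $p^{2k}\equiv 1$) and, where needed, modulo $p$. Writing $M:=m-\rho(2n-1)\ge 1$, in the two cases involving $u_n\lambda_2$ or $\mu_2^{p^{2n}e}$ but not $\bar\epsilon_1$, this forces $M$ or $M+1$ to equal $p^2$ times a positive integer of $p$-adic valuation $2n-2$ --- the valuation coming from $v_p(j)=2n-2$ together with the fact that the $\mu_2$-exponent of the candidate has strictly larger valuation --- hence $M\ge p^{2n}-1$; in the two cases involving $\bar\epsilon_1\lambda_2$ it forces the difference between $j$ and the candidate's $\mu_2$-exponent (which has $p$-adic valuation $2n-2$ and is positive, since the right-hand side of the degree equation is) to equal $p^{2n-2}w$ for an integer $w$ with $1\le w<p$ satisfying $w\equiv p\pmod{p^2-1}$, which is impossible because $0<p-w<p^2-1$. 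The surviving alternative $M\ge p^{2n}-1$ contradicts $M\le\rho(2n)-\rho(2n-1)<p^{2n}$, the truncation bound coming from the height of $P_{\rho(2n)}(t\mu_2)$ or $P_{\rho(2n)+1}(t\mu_2)$. This rules out all candidates and proves the lemma.

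The main obstacle is precisely this last step for the two summands at level $k=n$ (and the $\mu_2^{\pm p^{2n}}$-summand): there the filtration ranges genuinely overlap that of $y$, so one cannot conclude by crude filtration inequalities as for the other summands, and must instead carry out the $p$-adic bookkeeping through the reduced degree equations with care --- tracking signs so that the relevant differences are genuinely positive integers, and using the exact value of $\rho(2n)-\rho(2n-1)$ rather than an estimate. The remaining summands, by contrast, are disposed of purely by the filtration inequalities of the second step.
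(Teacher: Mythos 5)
Your proposal is correct and follows essentially the same route as the paper: compute $|y|$ and its filtration $-2\rho(2n-1)$, rule out the small summands of $\mu_2^{-1}E^\infty(C_{p^n},\ell/p)$ by the crude filtration bound, and then for each of the four surviving even-degree monomial shapes (the same four the paper lists) derive an arithmetic contradiction from the degree equation plus the truncation bound $m\le\rho(2n)$. The only difference is cosmetic: the paper reduces total degrees modulo $2p^{2n}$ to pin down the $(t\mu_2)$-exponent modulo $p^{2n}$, whereas you factor the halved degree equation across the coprime pair $(p^2,\ p^2-1)$ and read off $p$-adic valuations, which yields exactly the same congruences. One small point you leave implicit: in the summand $E(\bar\epsilon_1,\lambda_2)\otimes P(\mu_2^{\pm p^{2n}})\otimes P_{\rho(2n)+1}(t\mu_2)$ the truncation bound alone gives $w\le p$ rather than $w<p$; you need the fact (which you do record, as ``$p$-adic valuation $2n-2$'') that $p\nmid w$ to exclude $w=p$ -- this is the boundary case $e=\rho(2n)$ that the paper handles explicitly in its last paragraph.
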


\begin{proof}
The total degree of $y$ is $2(p^{2n+2} - p^{2n+1} + p - 1) + 2p^2 j \equiv
(2p-2) \mod 2p^{2n}$, which is even.

Looking at the formula for $\mu_2^{-1} E^\infty(C_{p^n}, \ell/p)$
in Corollary~\ref{kkmodp-cor:7.5}, the classes of lower filtration than $y$ all lie in
the terms
\begin{equation*}
E(u_n, \bar\epsilon_1) \otimes \F_p\{ \lambda_2 \mu_2^i \mid
j\in\Z,\, v_p(i)=2n-1 \}
  \otimes P_{\rho(2n)}(t\mu_2)
\end{equation*}
and
\begin{equation*}
E(\bar\epsilon_1, \lambda_2) \otimes P(\mu_2^{\pm p^{2n}}) \otimes
  P_{\rho(2n)+1}(t\mu_2) \,.
\end{equation*}
Those in even total degree and of lower filtration than $y$ are
\begin{equation*}
u_n \lambda_2 \cdot \mu_2^i (t\mu_2)^e, \quad
\bar\epsilon_1 \lambda_2 \cdot \mu_2^i (t\mu_2)^e
\end{equation*}
with $v_p(i) = 2n-1$, $\rho(2n-1) < e < \rho(2n)$,
and
\begin{equation*}
\mu_2^i (t\mu_2)^e, \quad
\bar\epsilon_1 \lambda_2 \cdot \mu_2^i (t\mu_2)^e
\end{equation*}
with $v_p(i) \ge 2n$, $\rho(2n-1) < e \le \rho(2n)$.

The total degree of $u_n \lambda_2 \cdot \mu_2^i (t\mu_2)^e$ for $v_p(i)
= 2n-1$ is $(-1) + (2p^2-1) + 2p^2 i + (2p^2-2)e \equiv (2p^2-2)(e+1)
\mod 2p^{2n}$.  For this to agree with the total degree of $y$, we must
have $(2p-2) \equiv (2p^2-2)(e+1) \mod 2p^{2n}$, so $(e+1) \equiv 1/(1+p)
\mod p^{2n}$ and $e \equiv \rho(2n-1) - 1 \mod p^{2n}$.  There is no
such $e$ with $\rho(2n-1) < e < \rho(2n)$.

The total degree of $\bar\epsilon_1 \lambda_2 \cdot \mu_2^i (t\mu_2)^e$
for $v_p(i) = 2n-1$ is $(2p-1) + (2p^2-1) + 2p^2 i + (2p^2-2)e \equiv 2p
+ (2p^2-2)(e+1) \mod 2p^{2n}$.  To agree with that
of $y$, we must have $(2p-2) \equiv 2p + (2p^2-2)(e+1) \mod 2p^{2n}$, so
$(e+1) \equiv 1/(1-p^2) \mod p^{2n}$ and $e \equiv \rho(2n) \mod p^{2n}$.
There is no such $e$ with $\rho(2n-1) < e < \rho(2n)$.

The total degree of $\mu_2^i (t\mu_2)^e$ for $v_p(i) \ge 2n$ is $2p^2 i +
(2p^2-2)e \equiv (2p^2-2)e \mod 2p^{2n}$.  To agree with that of $y$, we
must have $(2p-2) \equiv (2p^2-2)e \mod 2p^{2n}$, so $e \equiv 1/(1+p)
\equiv \rho(2n-1) \mod p^{2n}$.  There is no such $e$ with $\rho(2n-1)
< e \le \rho(2n)$.

The total degree of $\bar\epsilon_1 \lambda_2 \cdot \mu_2^i (t\mu_2)^e$
for $v_p(i) \ge 2n$ is $(2p-1) + (2p^2-1) + 2p^2 i + (2p^2-2)e$.
To agree modulo~$2p^{2n}$ with that of $y$, we must have $e \equiv
\rho(2n) \mod p^{2n}$.  The only such $e$ with $\rho(2n-1) < e \le
\rho(2n)$ is $e = \rho(2n)$.  But in that case, the total degree of
$\bar\epsilon_1 \lambda_2 \cdot \mu_2^i (t\mu_2)^e$ is $2p + 2p^2 i +
(2p^2-2)(\rho(2n)+1) = 2(p^{2n+2} + p - 1) + 2p^2 i$.  To be equal to that
of $y$, we must have $2p^2 i + 2p^{2n+1} = 2p^2 j$, which is impossible
for $v_p(i) \ge 2n$ and $v_p(j) = 2n-2$.
\end{proof}

\begin{lemma}\label{kkmodp-lem:7.9}
For $v_p(i) = 2n$, $n\ge1$ and $z = (t\mu_2)^{\rho(2n-1)} \cdot t^i$,
the only class in
\begin{equation*}
E(u_{n+1}, \bar\epsilon_1, \lambda_2) \otimes P(t^{\pm p^{2n}}, t\mu_2)
\end{equation*}
that can support a non-zero differential $d^r(x) = z$
for $r \ge 2\rho(2n)+2$
is (a unit times)
\begin{equation*}
x = t^{p^{2n+1} - p^{2n+2} + i} \cdot \bar\epsilon_1 \,.
\end{equation*}
\end{lemma}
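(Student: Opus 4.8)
The plan is to make the ``elementary check of bidegrees'' explicit. Index the $C_{p^{n+1}}$-Tate spectral sequence so that $d^r$ has bidegree $(-r,\,r-1)$ in coordinates $(s,t)$, and record the bidegrees of the relevant generators: $t$ lies in $(-2,0)$, $u_{n+1}$ in $(-1,0)$, $\bar\epsilon_1$ in $(0,2p-1)$, $\lambda_2$ in $(0,2p^2-1)$ and $t\mu_2$ in $(-2,2p^2)$. Since $E(u_{n+1},\bar\epsilon_1,\lambda_2)\otimes P(t^{\pm p^{2n}},t\mu_2)$ is a free module, I would first remark that distinct monomials $u_{n+1}^a\bar\epsilon_1^b\lambda_2^c\,t^{p^{2n}d}(t\mu_2)^e$ (with $a,b,c\in\{0,1\}$, $d\in\Z$, $e\ge0$) lie in distinct bidegrees: the $t$-coordinate $(2p-1)b+(2p^2-1)c+2p^2 e$ is injective in $(b,c,e)$ --- compare values modulo $2p^2$ --- and the $s$-coordinate then recovers $a$ and $d$. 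Hence $x$ may be taken to be a single such monomial, and it remains to show its exponents are forced.

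A parity count comes next. The target $z=(t\mu_2)^{\rho(2n-1)}t^i$ has even total degree, so $x$ has odd total degree; since $t$ and $t\mu_2$ are of even degree while $u_{n+1}$, $\bar\epsilon_1$ and $\lambda_2$ are of odd degree, this forces $a+b+c$ to be odd, so that $(a,b,c)\in\{(1,0,0),(0,1,0),(0,0,1),(1,1,1)\}$. Matching the total degrees of $x$ and $z$, which differ by $1$, gives
\begin{equation*}
2p^{2n}d = -a + (2p-1)b + (2p^2-1)c + (2p^2-2)e - (2p^2-2)\rho(2n-1) + 2i - 1 \,.
\end{equation*}
Reducing this identity modulo $2p^{2n}$, using $(p+1)\rho(2n-1)=p^{2n+1}+1$ and $v_p(i)=2n$, produces a congruence for $e$ modulo $p^{2n}$ that depends only on $(a,b,c)$; since the multipliers that occur ($p^2-1$, $p-1$, $p+1$) are units modulo $p^{2n}$, this determines $e$ modulo $p^{2n}$, hence uniquely, because $e\ge0$ and the hypothesis $r\ge2\rho(2n)+2$ confines $e$ to the range $0\le e<\rho(2n-1)<p^{2n}$.

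Finally I would compute the differential length in each case. Substituting the formula for $2p^{2n}d$ into $r=s(x)-s(z)$ (equivalently, matching vertical degrees) gives
\begin{equation*}
r = 2p^2\rho(2n-1) + 1 - (2p-1)b - (2p^2-1)c - 2p^2 e \,.
\end{equation*}
Evaluating this at the admissible value of $e$ and simplifying by means of the closed forms $\rho(2k-1)=p^{2k}-p^{2k-1}+\dots-p+1$ and $\rho(2k)=p^{2k}+p^{2k-2}+\dots+p^2$, one finds $r=2\rho(2n)+1$ in the case $(1,0,0)$, $r=2$ in the case $(0,0,1)$, and $r<0$ in the case $(1,1,1)$, each of which contradicts $r\ge2\rho(2n)+2$. (The value $2\rho(2n)+1$ is precisely the length of the odd differential $d^{2\rho(2n)+1}$ that, by the inductive construction of the spectral sequence, has already been shown to vanish for $C_{p^{n+1}}$.) The remaining case $(a,b,c,e)=(0,1,0,0)$ gives $r=2\rho(2n+1)$, which satisfies $r\ge2\rho(2n)+2$, and back-substitution yields $p^{2n}d=p^{2n+1}-p^{2n+2}+i$, i.e.\ $x\doteq t^{p^{2n+1}-p^{2n+2}+i}\,\bar\epsilon_1$.

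I expect the main obstacle to be the bookkeeping in this last step: one must evaluate $r$ exactly in the three excluded cases, taking the smallest admissible $e$ so that $r$ is as large as possible, and confirm it still falls short of $2\rho(2n)+2$. The case $(1,0,0)$ is the delicate one, as there $r=2\rho(2n)+1$ misses the threshold by a single step, so the sharp hypothesis $r\ge2\rho(2n)+2$ (rather than merely $r>0$) is genuinely needed. The manipulations of the alternating-sum expressions for $\rho$ are otherwise entirely routine.
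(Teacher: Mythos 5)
Your proposal is correct and carries out the same ``elementary check of bidegrees'' the paper performs: parity of the total degree restricts to four monomial types, the hypothesis $r\ge 2\rho(2n)+2$ bounds the $t\mu_2$-exponent $e$ into a range of length $<p^{2n}$, and reducing the total-degree equation modulo $2p^{2n}$ (using $v_p(i)=2n$ and $(p+1)\rho(2n-1)=p^{2n+1}+1$) pins $e$ down modulo $p^{2n}$; the only difference is cosmetic — where the paper shows the forced residue of $e$ lies outside the admissible range in the three excluded cases, you instead evaluate $r$ at the forced $e$ and check it falls below the threshold, which is an equivalent reading of the same inequality.
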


\begin{proof}
The class $z$ has total degree $(2p^2-2)\rho(2n-1) - 2i = 2p^{2n+2}
- 2p^{2n+1} + 2p - 2 - 2i \equiv (2p-2) \mod 2p^{2n}$, which is even,
and vertical degree $2p^2 \rho(2n-1)$.  Hence $x$ has odd total degree,
and vertical degree at most $2p^2 \rho(2n-1) - 2\rho(2n) - 1 = 2p^{2n+2}
- 2p^{2n+1} - \dots - 2p^3 - 1$.  This leaves the possibilities
\begin{equation*}
u_{n+1} \cdot t^j (t\mu_2)^e, \quad
\bar\epsilon_1 \cdot t^j (t\mu_2)^e, \quad
\lambda_2 \cdot t^j (t\mu_2)^e
\end{equation*}
with $v_p(j) \ge 2n$ and $0 \le e < p^{2n} - p^{2n-1} - \dots - p
= \rho(2n-1) - \rho(2n-2) - 1$,
and
\begin{equation*}
u_{n+1} \bar\epsilon_1 \lambda_2 \cdot t^j (t\mu_2)^e
\end{equation*}
with $v_p(j) \ge 2n$ and $0 \le e < p^{2n} - p^{2n-1} - \dots - p - 1
= \rho(2n-1) - \rho(2n-2) - 2$.

The total degree of $x$ must be one more than the total degree of $z$,
hence is congruent to $(2p-1)$ modulo~$2p^{2n}$.

The total degree of $u_{n+1} \cdot t^j (t\mu_2)^e$ is $-1 - 2j +
(2p^2-2)e \equiv -1 + (2p^2-2)e \mod 2p^{2n}$.  To have $(2p-1) \equiv -1
+ (2p^2-2)e \mod 2p^{2n}$ we must have $e \equiv -p/(1-p^2) \equiv p^{2n}
- p^{2n-1} - \dots - p \mod p^{2n}$, which does not happen for $e$
in the allowable range.

The total degree of $\lambda_2 \cdot t^j (t\mu_2)^e$ is $(2p^2-1) - 2j +
(2p^2-2)e \equiv (2p^2-1) + (2p^2-2)e \mod 2p^{2n}$.  To have $(2p-1)
\equiv (2p^2-1) + (2p^2-2)e \mod 2p^{2n}$ we must have $e \equiv -p/(1+p)
\equiv \rho(2n-1) - 1 \mod p^{2n}$, which does not happen.

The total degree of $u_{n+1} \bar\epsilon_1 \lambda_2 \cdot t^j
(t\mu_2)^e$ is $-1 + (2p-1) + (2p^2-1) - 2j + (2p^2-2)e \equiv (2p -
1) + (2p^2-2)(e+1) \mod 2p^{2n}$.  To have $(2p-1) \equiv (2p - 1) +
(2p^2-2)(e+1) \mod 2p^{2n}$ we must have $(e+1) \equiv 0 \mod p^{2n}$,
so $e \equiv p^{2n} - 1 \mod p^{2n}$, which does not happen.

The total degree of $\bar\epsilon_1 \cdot t^j (t\mu_2)^e$ is $(2p-1) -
2j + (2p^2-2)e \equiv (2p-1) + (2p^2-2)e \mod 2p^{2n}$.  To have $(2p-1)
\equiv (2p-1) + (2p^2-2)e \mod 2p^{2n}$, we must have $e \equiv 0 \mod
p^{2n}$, so $e = 0$ is the only possibility in the allowable range.
In that case, a check of total degrees shows that we must have $j =
p^{2n+1} - p^{2n+2} + i$.
\end{proof}

\begin{corollary}\label{kkmodp-cor:7.10}
$V(1)_* \thh(\ell/p)^{C_{p^n}}$ is finite in each degree.
\end{corollary}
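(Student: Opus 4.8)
The plan is to prove Corollary~\ref{kkmodp-cor:7.10} by induction on $n\ge0$, combining the fundamental cofiber sequence coming from the cyclotomic structure with the (first quadrant) homotopy orbit spectral sequence. For the base case $n=0$ one has $\thh(\ell/p)^{C_{p^0}}=\thh(\ell/p)$, and Proposition~\ref{kkmodp-prop:5.4} describes $V(1)_*\thh(\ell/p)$ as $E(\lambda_2)\otimes P(\mu_2)$ tensored with a $2p$-dimensional graded $\F_p$-vector space concentrated in degrees $0$ through $2p-1$. Since $|\lambda_2|=2p^2-1$ and $|\mu_2|=2p^2$ are positive, each degree contains only finitely many monomials, so $V(1)_*\thh(\ell/p)$ is finite in each degree and bounded below.

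For the inductive step, I would assume $V(1)_*\thh(\ell/p)^{C_{p^{n-1}}}$ is finite in each degree, with $n\ge1$. The top row of the cyclotomic structure diagram of Section~\ref{kkmodp-sec:tate},
\[
\thh(\ell/p)_{hC_{p^n}} \xrightarrow{\,N\,} \thh(\ell/p)^{C_{p^n}}
\xrightarrow{\,R\,} \thh(\ell/p)^{C_{p^{n-1}}} \to \Sigma\,\thh(\ell/p)_{hC_{p^n}} \,,
\]
is a cofiber sequence, so it induces a long exact sequence in $V(1)$-homotopy. As $V(1)_*\thh(\ell/p)^{C_{p^{n-1}}}$ is finite in each degree by hypothesis, it will suffice to show that $V(1)_*\thh(\ell/p)_{hC_{p^n}}$ is finite in each degree: then $V(1)_m\thh(\ell/p)^{C_{p^n}}$ sits in an extension of a subgroup of $V(1)_m\thh(\ell/p)^{C_{p^{n-1}}}$ by a quotient of $V(1)_m\thh(\ell/p)_{hC_{p^n}}$, both finite.

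For the homotopy orbits I would use the homotopy orbit spectral sequence
\[
E^2_{s,t}=H_s(C_{p^n};V(1)_t\thh(\ell/p)) \Longrightarrow V(1)_{s+t}\thh(\ell/p)_{hC_{p^n}} \,.
\]
Since $\ell/p$ is connective, so is $\thh(\ell/p)$, and since $V(1)$ is a finite connective spectrum, $V(1)\wedge\thh(\ell/p)$ is connective; hence $V(1)_*\thh(\ell/p)$ vanishes in negative degrees and is finite in each degree by Proposition~\ref{kkmodp-prop:5.4}. The $C_{p^n}$-action on $V(1)_*\thh(\ell/p)$ is trivial, being the restriction of an $S^1$-action, and $H_s(C_{p^n};\F_p)$ is one-dimensional for $s\ge0$ and vanishes for $s<0$, so $E^2_{s,t}\cong V(1)_t\thh(\ell/p)$ for $s\ge0$ and is zero otherwise. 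Only the bidegrees $(s,t)$ with $0\le t\le m$ contribute to total degree $m$, each of them finite, so $E^2$, hence $E^\infty$, is finite in every total degree; being a first quadrant spectral sequence it converges strongly, and $V(1)_*\thh(\ell/p)_{hC_{p^n}}$ is finite in each degree, which closes the induction.

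I do not expect a serious obstacle: the only point requiring care is the bookkeeping ensuring $E^2$ is finite in each total degree, which rests entirely on the connectivity of $\thh(\ell/p)$ and the explicit form of $V(1)_*\thh(\ell/p)$ from Proposition~\ref{kkmodp-prop:5.4}. Alternatively the statement can be read off Theorem~\ref{kkmodp-thm:7.1}: each total degree of $\hat E^\infty(C_{p^n},\ell/p)$ as computed in Corollary~\ref{kkmodp-cor:7.2} meets only finitely many monomials, because $t$ has negative degree while $t\mu_2$ has positive degree and the truncated polynomial factors are finite-dimensional, so $V(1)_*\thh(\ell/p)^{tC_{p^n}}$ is finite in each degree; the $(2p-2)$-coconnected map $\hat\Gamma_{n+1}$ of Corollary~\ref{kkmodp-cor:6.10} then settles degrees $\ge 2p-2$, and the fundamental cofiber sequence with the base case handles the remaining low degrees. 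I would nonetheless favour the direct inductive argument above, as it avoids invoking the full differential analysis.
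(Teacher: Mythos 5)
Your proposal is correct, and it takes a genuinely different (and arguably cleaner) route than the paper. The paper proves the corollary in one line by inspecting the $E^\infty$-term of the $C_{p^n}$-Tate spectral sequence in Corollary~\ref{kkmodp-cor:7.2}: that term is visibly finite in each total degree, so $V(1)_* \thh(\ell/p)^{tC_{p^n}}$ is degreewise finite, and the $(2p-2)$-coconnected comparison map $\hat\Gamma_{n+1}$ from Corollary~\ref{kkmodp-cor:6.10} together with the low-degree range (where $\pi:\ell/p\to H\Z/p$ gives control) transfers this to $V(1)_* \thh(\ell/p)^{C_{p^n}}$. Your primary argument instead proceeds by induction on $n$, using the norm cofiber sequence $\thh(\ell/p)_{hC_{p^n}} \to \thh(\ell/p)^{C_{p^n}} \to \thh(\ell/p)^{C_{p^{n-1}}}$ and the first-quadrant homotopy orbit spectral sequence with $E^2_{s,t} = H_s(C_{p^n};V(1)_t \thh(\ell/p))$, where the base case is read off from Proposition~\ref{kkmodp-prop:5.4}. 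This buys something: it is self-contained, needs none of the differential analysis in Theorem~\ref{kkmodp-thm:7.1} and Corollary~\ref{kkmodp-cor:7.2}, and handles all degrees uniformly without invoking coconnectedness; it uses only connectivity of $\thh(\ell/p)$, degreewise finiteness of $V(1)_*\thh(\ell/p)$, and strong convergence of a first-quadrant spectral sequence. The paper's one-liner, by contrast, leans on machinery already built for other purposes. Your alternative argument at the end essentially reconstructs the paper's intended reasoning, so you have both. One minor caveat: you should note explicitly that all your $E^2$-terms vanish for $t<0$ because $V(1)\wedge\thh(\ell/p)$ is connective, which you do; the argument is complete as written.
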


\begin{proof}
This is clear by inspection of the $E^\infty$-term in
Corollary~\ref{kkmodp-cor:7.2}.
\end{proof}

\begin{lemma}\label{kkmodp-lem:7.11}
The map $G_n$ induces an isomorphism
\begin{equation*}
V(1)_* \thh(\ell/p)^{tC_{p^{n+1}}} \xr{\cong}
V(1)_* (\rho_p^*\thh(\ell/p)^{tC_p})^{hC_{p^n}}
\end{equation*}
in all degrees.  In the limit over the Frobenius maps $F$,
there is a map $G$ inducing an isomorphism
\begin{equation}\label{kkmodp-eq:6.11}
V(1)_* \thh(\ell/p)^{tS^1} \xr{\cong}
V(1)_* (\rho_p^*\thh(\ell/p)^{tC_p})^{hS^1} \,.
\end{equation}
\end{lemma}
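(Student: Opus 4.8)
The plan is to show first that $V(1)_* G_n$ is an isomorphism in \emph{every} degree, and then to deduce the $S^1$-statement by passing to homotopy limits over the Frobenius maps. The starting point is the right-hand square of diagram~\eqref{kkmodp-eq:7.3}, with $n$ replaced throughout by $n+1$, which gives the relation $G_n \circ \hat\Gamma_{n+1} = (\hat\Gamma_1)^{hC_{p^n}} \circ \Gamma_n$. By Corollary~\ref{kkmodp-cor:6.10} the maps $\hat\Gamma_{n+1}$ and $\Gamma_n$ are $(2p-2)$-coconnected in $V(1)$-homotopy, and by Theorem~\ref{kkmodp-thm:7.4} so is $(\hat\Gamma_1)^{hC_{p^n}}$; hence, as already observed after diagram~\eqref{kkmodp-eq:7.3}, $V(1)_* G_n$ is an isomorphism in all degrees $>2p-1$. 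So everything comes down to promoting this range statement to an isomorphism in all degrees, and for that I would use a periodicity operator.

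Since $\ell/p$ is an associative $\ell$-algebra, $\thh(\ell/p)$ is a module over the commutative $S$-algebra $\thh(\ell)$, compatibly with the cyclotomic structure, so both $\thh(\ell/p)^{tC_{p^{n+1}}}$ and $(\rho_p^*\thh(\ell/p)^{tC_p})^{hC_{p^n}}$ are module spectra over the corresponding ring spectra for $\ell$, and $G_n$ is natural, hence a map of modules over the ring map $G_n^{\ell}$. In $\hat E^\infty(C_{p^{n+1}}, \ell) = E(\lambda_1,\lambda_2)\otimes P(t^{\pm p^{2n+2}})$ (see \cite{kkmodp-AR02}*{Th.~6.1}) the class $t^{p^{2n+2}}$ is a permanent cycle, so it lifts to an element of $V(1)_{-2p^{2n+2}}\thh(\ell)^{tC_{p^{n+1}}}$. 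I claim multiplication by this element is an isomorphism of degree $-2p^{2n+2}$ on $V(1)_*\thh(\ell/p)^{tC_{p^{n+1}}}$, and that multiplication by its image $G_n^{\ell}(t^{p^{2n+2}})$ is an isomorphism on $V(1)_*(\rho_p^*\thh(\ell/p)^{tC_p})^{hC_{p^n}}$. Both claims can be read off the explicit $E^\infty$-terms: in $\hat E^\infty(C_{p^{n+1}}, \ell/p)$ of Corollary~\ref{kkmodp-cor:7.2} (with $n+1$ in place of $n$), multiplication by $t^{p^{2n+2}}$ is multiplication by a unit on the $P(t^{\pm p^2})$- and $P(t^{\pm p^{2n+2}})$-summands and merely permutes the displayed bases on the summands $\F_p\{t^j\mid v_p(j)=2k-2\}$ and $\F_p\{t^j\lambda_2\mid v_p(j)=2k-1\}$, since $v_p(j+p^{2n+2})=v_p(j)$ for $2\le k\le n+1$; the analogous statement on the homotopy fixed point side follows from Corollary~\ref{kkmodp-cor:7.5} under the correspondence $\mu_2\leftrightarrow t^{-p^2}$. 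As $V(1)_*\thh(\ell/p)^{tC_{p^{n+1}}}$ and $V(1)_*(\rho_p^*\thh(\ell/p)^{tC_p})^{hC_{p^n}}$ are finite in each degree (Corollary~\ref{kkmodp-cor:7.10} and its analogue via Corollary~\ref{kkmodp-cor:7.5}), the relevant filtrations are finite in each degree, so an isomorphism on $E^\infty$ forces the corresponding multiplication on the abutment to be an isomorphism.

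These two isomorphisms fit into a commutative square relating $V(1)_* G_n$ in degrees $d$ and $d-2p^{2n+2}$, so $V(1)_* G_n$ is an isomorphism in degree $d$ if and only if it is one in degree $d-2p^{2n+2}$. Since every integer is congruent modulo $2p^{2n+2}$ to a degree exceeding $2p-1$, where $V(1)_* G_n$ is already known to be an isomorphism, it follows that $V(1)_* G_n$ is an isomorphism in all degrees, as claimed. For the $S^1$-statement I would pass to the homotopy limit over the Frobenius maps $F$: there are natural identifications $\thh(\ell/p)^{tS^1}\simeq\holim_{n,F}\thh(\ell/p)^{tC_{p^n}}$ and $(\rho_p^*\thh(\ell/p)^{tC_p})^{hS^1}\simeq\holim_n(\rho_p^*\thh(\ell/p)^{tC_p})^{hC_{p^n}}$, under which $G=\holim_n G_n$; the degreewise finiteness from Corollary~\ref{kkmodp-cor:7.10} makes all the $\Rlim$-terms vanish, so $V(1)_* G$ is again an isomorphism in every degree.

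The main obstacle is the middle step: one must make sure the element $t^{p^{2n+2}}$ really does lift to $V(1)_*\thh(\ell)^{tC_{p^{n+1}}}$ and that its action on each of the two $\ell/p$-modules, on both sides of $G_n$, is faithfully recorded by multiplication on the respective $E^\infty$-terms, i.e.\ that no hidden multiplicative extensions obstruct transferring the ``acts invertibly'' statement from $E^\infty$ to the abutment. Degreewise finiteness of all the groups in play --- which is precisely what Corollaries~\ref{kkmodp-cor:7.5} and~\ref{kkmodp-cor:7.10} provide --- is what keeps this harmless.
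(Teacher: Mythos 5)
Your proposal is correct and takes essentially the same approach as the paper: use that $V(1)_* G_n$ is already an isomorphism above degree $2p-1$ (from the coconnectedness of the $\Gamma$-maps in diagram~\eqref{kkmodp-eq:7.3}), then exploit the module structure over the $\ell$-version and the fact that the permanent cycle $t^{\pm p^{2n+2}}$ from $\hat E^\infty(C_{p^{n+1}}, \ell)$ acts invertibly on both sides (with $G_n$ intertwining this action with multiplication by a power of $\mu_2$), so that periodicity propagates the isomorphism to all degrees. Your remarks about hidden extensions being ruled out by degreewise finiteness, and the $\Rlim$-vanishing for the passage to $S^1$, are slightly more explicit than the paper's terse proof but are the same argument.
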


\begin{proof}
As remarked after diagram~\eqref{kkmodp-eq:7.3}, $G_n$ induces an isomorphism
in $V(1)$-homotopy above degree $(2p-2)$.  The permanent cycle
$t^{-p^{2n+2}}$ in $\hat E^\infty(C_{p^{n+1}}, \ell)$ acts
invertibly on $\hat E^\infty(C_{p^{n+1}}, \ell/p)$, and
its image $G_n(t^{-p^{2n+2}}) = \mu_2^{p^{2n}}$ in $\mu_2^{-1}
E^\infty(C_{p^n}, \ell)$ acts invertibly on $\mu_2^{-1}
E^\infty(C_{p^n}, \ell/p)$.  Therefore the module action derived
from the $\ell$-algebra structure on $\ell/p$ ensures that $G_n$ induces
isomorphisms in $V(1)$-homotopy in all degrees.
\end{proof}

\begin{theorem}\label{kkmodp-thm:7.12}
The isomorphism~\eqref{kkmodp-eq:6.11} admits the following description
at the associated graded level:
\begin{itemize}
  \item[\textup{(a)}] 
The associated graded of $V(1)_* \thh(\ell/p)^{tS^1}$ for the
$S^1$-Tate spectral sequence is
\begin{equation*}
\begin{aligned}
\hat E^\infty&(S^1, \ell/p) = E(\lambda_2) \otimes
        \F_p\{t^{-i} \mid 0<i<p\} \otimes P(t^{\pm p^2})\\
&\quad \oplus \bigoplus_{k\ge2} E(\lambda_2) \otimes
        \F_p\{t^j \mid j\in\Z,\, v_p(j) = 2k-2\} \otimes P_{\rho(2k-3)}(t\mu_2) \\
&\quad \oplus \bigoplus_{k\ge2} E(\bar\epsilon_1) \otimes
        \F_p\{t^j \lambda_2 \mid j\in\Z,\, v_p(j) = 2k-1\} \otimes
        P_{\rho(2k-2)}(t\mu_2) \\
&\qquad \oplus E(\bar\epsilon_1, \lambda_2) \otimes P(t\mu_2) \,.
\end{aligned}
\end{equation*}
  \item[\textup{(b)}] 
The associated graded of\, $V(1)_* \thh(\ell/p)^{hS^1}$\, for the
$S^1$-homotopy fixed point spectral sequence maps by a
$(2p-2)$-coconnected map to
\begin{equation*}
\begin{aligned}
\mu_2^{-1} E^\infty&(S^1, \ell/p) = E(\lambda_2) \otimes
        \F_p\{\mu_0^i \mid 0<i<p\} \otimes P(\mu_2^{\pm1}) \\
&\quad \oplus \bigoplus_{k\ge1} E(\lambda_2) \otimes
        \F_p\{\mu_2^j \mid j\in\Z,\, v_p(j) = 2k-2\} \otimes P_{\rho(2k-1)}(t\mu_2) \\
&\quad \oplus \bigoplus_{k\ge1} E(\bar\epsilon_1) \otimes
        \F_p\{\lambda_2 \mu_2^j \mid j\in\Z,\, v_p(j) = 2k-1\} \otimes
        P_{\rho(2k)}(t\mu_2) \\
&\qquad \oplus E(\bar\epsilon_1, \lambda_2) \otimes P(t\mu_2) \,.
\end{aligned}
\end{equation*}
  \item[\textup{(c)}] 
    The isomorphism from~\textup{(a)} to~\textup{(b)} induced by $G$ takes $t^{-i}$ to
$\mu_0^i$ for $0<i<p$ and $t^i$ to $\mu_2^j$ for $i + p^2j =
0$, up to a unit factor in $\F_p$.
Furthermore, it takes multiples by $\bar\epsilon_1$, $\lambda_2$ or
$t\mu_2$ in the source to the same multiples in the target,
up to a unit factor in~$\F_p$.
\end{itemize}
\end{theorem}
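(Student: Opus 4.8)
The plan is to obtain both $S^1$-indexed spectral sequences by passing to the limit over $n$ from the $C_{p^n}$-indexed ones of Theorems~\ref{kkmodp-thm:7.1} and~\ref{kkmodp-thm:7.4}, using naturality along the Tate Frobenius $F^t$ and the homotopy Frobenius $F^h$, and then to match them up via Lemma~\ref{kkmodp-lem:7.11} and the explicit form of $\hat\Gamma_1$ from Proposition~\ref{kkmodp-prop:6.9}. For part~(a), the $S^1$-Tate spectral sequence has $\hat E^2(S^1,\ell/p)=P(t^{\pm1})\otimes V(1)_*\thh(\ell/p)$ and is a module spectral sequence over $\hat E^*(S^1,\ell)$, whose structure is recorded in~\cite{kkmodp-AR02}. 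The map $F^t\:\thh(\ell/p)^{tS^1}\to\thh(\ell/p)^{tC_{p^n}}$ identifies $\hat E^2(S^1,\ell/p)$ with the even columns of $\hat E^2(C_{p^n},\ell/p)$, fixing $t$ and $V(1)_*\thh(\ell/p)$. Since the $S^1$-Tate spectral sequence is concentrated in even columns it supports no differentials of odd length, while each even-length differential it supports is detected by $F^t$ in some $\hat E^*(C_{p^n},\ell/p)$; reading off Theorem~\ref{kkmodp-thm:7.1} this forces the initial $d^2(\epsilon_0\mu_0^{i-1})=t\mu_0^i$, the vanishing of multiplication by $\lambda_1$ (Proposition~\ref{kkmodp-prop:5.4}), and the two families $d^{2\rho(2k-1)}(t^{p^{2k-1}-p^{2k}+i}\cdot\bar\epsilon_1)\doteq(t\mu_2)^{\rho(2k-3)}\cdot t^i$ for $v_p(i)=2k-2$ and $d^{2\rho(2k)}(t^{p^{2k-1}-p^{2k}})\doteq\lambda_2\cdot t^{p^{2k-1}}\cdot(t\mu_2)^{\rho(2k-2)}$, now for all $k\ge1$. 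Running these exactly as in the proof of Corollary~\ref{kkmodp-cor:7.2} with $u_n$ deleted and $n\to\infty$ leaves the stated $\hat E^\infty(S^1,\ell/p)$; the essential point is that the $C_{p^n}$-truncating differential $d^{2\rho(2n)+1}(u_nt^{-p^{2n}})\doteq(t\mu_2)^{\rho(2n-2)+1}$ has no $S^1$-analogue, so $t\mu_2$ survives as a non-nilpotent permanent cycle and the last summand is the untruncated $E(\bar\epsilon_1,\lambda_2)\otimes P(t\mu_2)$. The band argument of Corollary~\ref{kkmodp-cor:7.2} rules out further differentials, and conditional convergence identifies this $\hat E^\infty$-term with the associated graded of $V(1)_*\thh(\ell/p)^{tS^1}$.

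For part~(b), by Proposition~\ref{kkmodp-prop:6.9} the $S^1$-homotopy fixed point spectral sequence $\mu_2^{-1}E^*(S^1,\ell/p)$ for $\rho_p^*\thh(\ell/p)^{tC_p}$ has $E^2$-term $P(t)\otimes V(1)_*\thh(\ell/p)^{tC_p}=E(\lambda_2)\otimes\F_p\{1,\epsilon_0,\dots,\bar\epsilon_1\}\otimes P(t,\mu_2^{\pm1})$; it is the target of $(\hat\Gamma_1)^{hS^1}$ coming from the $S^1$-homotopy fixed point spectral sequence $E^*(S^1,\ell/p)$ for $\thh(\ell/p)$, which inverts $\mu_2$ on $E^2$, and the source of $R^h$ into $\hat E^*(S^1,\ell/p)$, which inverts $t$ on $E^2$. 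I would read off its differential pattern from Theorem~\ref{kkmodp-thm:7.4} by naturality along these two maps in the limit $n\to\infty$, and running the differentials as in the proof of Corollary~\ref{kkmodp-cor:7.5} (again with $u_n$ and the odd-length differential absent, so $t\mu_2$ non-nilpotent) gives the stated $\mu_2^{-1}E^\infty(S^1,\ell/p)$. That the comparison map is $(2p-2)$-coconnected follows because $\hat\Gamma_1\:\thh(\ell/p)\to\thh(\ell/p)^{tC_p}$ is $(2p-2)$-coconnected in $V(1)$-homotopy (Proposition~\ref{kkmodp-prop:6.9}), the functor $(-)^{hS^1}$ replaces an $E^2$-term by its tensor product with $P(t)$, and tensoring with $P(t)$ preserves $(2p-2)$-coconnectivity since $|t|=-2<0$; a comparison of the two conditionally convergent spectral sequences then transfers this to the abutments.

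For part~(c), the isomorphism~\eqref{kkmodp-eq:6.11} is induced by the maps $G_n$, which on $E^\infty$-terms intertwine the Tate and homotopy fixed point descriptions of $V(1)_*\thh(\ell/p)^{tC_p}$. By Proposition~\ref{kkmodp-prop:6.9} the identification $\hat\Gamma_1$ carries $\epsilon_0^\delta\mu_0^i$ to $(u_1t^{-1})^\delta t^{-i}$, $\bar\epsilon_1$ to $\bar\epsilon_1$, $\lambda_2$ to $\lambda_2$ and $\mu_2$ to $t^{-p^2}$; together with the relation $G_n(t^{-p^{2n+2}})=\mu_2^{p^{2n}}$ from the proof of Lemma~\ref{kkmodp-lem:7.11}, this shows that $G$ sends the representative $t^{-i}$ in $\hat E^\infty(S^1,\ell/p)$ to $\mu_0^i$ for $0<i<p$, sends $t^i$ with $i+p^2j=0$ to $\mu_2^j$, and fixes $\bar\epsilon_1$, $\lambda_2$ and $t\mu_2$ up to units in $\F_p$, the stated compatibility with multiplication by these classes following by $P(v_2)$-linearity. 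The main obstacle will be part~(a): I expect the delicate point to be verifying that the $S^1$-Tate spectral sequence inherits exactly the even-length differential families from all of the $C_{p^n}$-Tate spectral sequences and no others, that the absence of the odd-length differential leaves $t\mu_2$ non-nilpotent, and that the resulting unbounded $E^\infty$-term really is the associated graded of the abutment, so that the ``no room for further differentials'' argument must be carried through in the limit rather than at a fixed finite stage as in Corollary~\ref{kkmodp-cor:7.2}.
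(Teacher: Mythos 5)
Your proposal is correct and takes essentially the same route as the paper, whose proof is simply ``Claims (a) and (b) follow by passage to the limit over $n$ from Corollaries~\ref{kkmodp-cor:7.2} and~\ref{kkmodp-cor:7.5}, and Claim (c) follows by passage to the same limit from the formulas for the isomorphism induced by $\hat\Gamma_{n+1}$ given below diagram~\eqref{kkmodp-eq:7.7}.'' You make explicit what the paper leaves implicit (naturality along $F^t$ and $F^h$, the disappearance of $u_n$ and the truncating odd-length differential in the $S^1$-case, conditional convergence), and your direct fiber argument for the $(2p-2)$-coconnectedness in part (b) is a clean alternative to the paper's indirect route through diagram~\eqref{kkmodp-eq:7.3}, Corollary~\ref{kkmodp-cor:6.10} and Lemma~\ref{kkmodp-lem:7.11}.
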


\begin{proof}
Claims~(a) and~(b) follow by passage to the limit over $n$ from
Corollaries~\ref{kkmodp-cor:7.2} and~\ref{kkmodp-cor:7.5}.  Claim~(c) follows by passage to the same limit
from the formulas for the isomorphism induced by $\hat\Gamma_{n+1}$,
which were given below diagram~\eqref{kkmodp-eq:7.7}.
\end{proof}

\section{Topological cyclic homology }
\label{kkmodp-sec:tc}
By definition, there is a fiber sequence
\begin{equation*}
TC(B;p) \xr{\pi} TF(B;p) \xr{R-1} TF(B;p) \to \Sigma TC(B;p)
\end{equation*}
inducing a long exact sequence
\begin{equation}
\dots \xr{\partial} V(1)_* TC(B;p) \xr{\pi} V(1)_* TF(B;p) \xr{R_*-1}
V(1)_* TF(B;p) \xr{\partial} \dots
\label{kkmodp-eq:8.1}
\end{equation}
in $V(1)$-homotopy.  By Corollary~\ref{kkmodp-cor:6.10}, there are $(2p-2)$-coconnected
maps $\Gamma$ and $\hat\Gamma$ from $V(1)_* TF(\ell/p;p)$ to $V(1)_*
\thh(\ell/p)^{hS^1}$ and $V(1)_* \thh(\ell/p)^{tS^1}$, respectively.  We
model $V(1)_* TF(\ell/p;p)$ in degrees $> (2p-2)$ by the map $\hat\Gamma$
to the $S^1$-Tate construction.  Then, by diagram~\eqref{kkmodp-eq:7.3},
$R_*$ is modeled in the same range of degrees by the chain
of maps below\,:
\begin{equation*}
\xymatrix{
V(1)_* \thh(B)^{tS^1} \ar[dr]^G &
V(1)_* \thh(B)^{hS^1} \ar[d]^{(\hat\Gamma_1)^{hS^1}} \ar[r]^{R^h_*} &
V(1)_* \thh(B)^{tS^1} \\
& V(1)_* (\rho_p^*\thh(B)^{tC_p})^{hS^1}\,.
}
\end{equation*}
Here $R^h$ induces a map of spectral sequences
\begin{equation*}
E^*(R^h) \: E^*(S^1, B) \to \hat E^*(S^1, B)
\end{equation*}
(abutting to $R^h_*$),
which at the $E^2$-term equals the inclusion that algebraically inverts
$t$.  When $B = \ell/p$, the left hand map $G$ is an isomorphism by
Lemma~\ref{kkmodp-lem:7.11}, and the middle (wrong-way) map is $(2p-2)$-coconnected.

\begin{proposition}\label{kkmodp-prop:8.2}
In degrees $> (2p-2)$, the homomorphism
\begin{equation*}
E^\infty(R^h) \: E^\infty(S^1, \ell/p) \to \hat E^\infty(S^1, \ell/p)
\end{equation*}
maps
\begin{itemize}
  \item[\textup{(a)}]$E(\bar\epsilon_1, \lambda_2) \otimes P(t\mu_2)$ identically to the
same expression;
\item[\textup{(b)}]$E(\lambda_2) \otimes \F_p\{\mu_2^{-j}\} \otimes P_{\rho(2k-1)}(t\mu_2)$
surjectively onto
$E(\lambda_2) \otimes \F_p\{t^j\} \otimes P_{\rho(2k-3)}(t\mu_2)$
for each $k\ge2$, $j = dp^{2k-2}$, $0<d<p^2-p$ and $p\nmid d$;
\item[\textup{(c)}]$E(\bar\epsilon_1) \otimes \F_p\{\lambda_2 \mu_2^{-j}\} \otimes
P_{\rho(2k)}(t\mu_2)$
surjectively onto
$E(\bar\epsilon_1) \otimes \F_p\{t^j \lambda_2\} \otimes
P_{\rho(2k-2)}(t\mu_2)$
for each $k\ge2$, $j = dp^{2k-1}$ and $0<d<p$;
\item[\textup{(d)}]
the remaining terms to zero.
\end{itemize}
\end{proposition}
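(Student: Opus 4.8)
The plan is to read off $E^\infty(R^h)$ from the two $E^\infty$-terms already computed in Theorem~\ref{kkmodp-thm:7.12} together with the dictionary of Theorem~\ref{kkmodp-thm:7.12}(c). Two structural facts drive everything. First, as recalled before the statement, $R^h$ induces at the $E^2$-level the inclusion of the homotopy fixed point spectral sequence into the Tate spectral sequence that algebraically inverts $t$, so it is injective with image the part of $\hat E^2(S^1,\ell/p)$ concentrated in nonnegative $t$-power. Second, both $\mu_2^{-1}E^*(S^1,\ell/p)$ and $\hat E^*(S^1,\ell/p)$ are module spectral sequences over the corresponding spectral sequences for $\ell$, and $R^h$ is a map of such modules, with the $\ell$-part governed by \cite{kkmodp-AR02}*{\S6}. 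Since $\lambda_2$, $t\mu_2$ (the image of $v_2$) and $\bar\epsilon_1$ are infinite cycles carried to the correspondingly-named infinite cycles, it will suffice to describe $E^\infty(R^h)$ on a generating set of $\mu_2^{-1}E^\infty(S^1,\ell/p)$ over $E^\infty(S^1,\ell)$, namely on $1$, the classes $\mu_0^i$ with $0<i<p$, and $\bar\epsilon_1$, together with their periodic multiples.

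The mechanism is then as follows. Because the differentials of the two spectral sequences are literally the same (Corollaries~\ref{kkmodp-cor:7.2} and~\ref{kkmodp-cor:7.5} both descend from Theorem~\ref{kkmodp-thm:7.1} along $\hat\Gamma_1$ and $R^h$), a permanent cycle of the source remains a permanent cycle in the Tate spectral sequence, and $E^\infty(R^h)$ carries it to the class it represents there provided that class survives to $\hat E^\infty(S^1,\ell/p)$, and to zero otherwise. The only way such a class can fail to survive, having survived in the homotopy fixed point spectral sequence, is by being hit in the Tate spectral sequence by one of the differentials of Theorem~\ref{kkmodp-thm:7.1} whose source $t^{p^{2k-1}-p^{2k}+j}\bar\epsilon_1$ or $t^{p^{2k-1}-p^{2k}}$ lies in negative $t$-power, hence is invisible to the homotopy fixed point spectral sequence. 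This gives~(a) at once: the classes of $E(\bar\epsilon_1,\lambda_2)\otimes P(t\mu_2)$ are represented by monomials of nonnegative $t$-power that are not hit from the negative region, so $E^\infty(R^h)$ fixes them. For~(b) and~(c), the dictionary of Theorem~\ref{kkmodp-thm:7.12}(c) (together with the further identification built into the restriction map $R$) carries the source generators $\mu_2^{-j}$ and $\lambda_2\mu_2^{-j}$, for $k\ge2$, onto the target generators $t^j$ and $t^j\lambda_2$, while the differentials above kill the top $\rho(2k-1)-\rho(2k-3)=p^{2k}-p^{2k-1}$, respectively $\rho(2k)-\rho(2k-2)=p^{2k}$, multiples by $t\mu_2$, which is exactly the drop of truncation from $P_{\rho(2k-1)}$ to $P_{\rho(2k-3)}$, respectively from $P_{\rho(2k)}$ to $P_{\rho(2k-2)}$. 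For~(d), the summand $E(\lambda_2)\otimes\F_p\{\mu_0^i\mid 0<i<p\}\otimes P(\mu_2^{\pm1})$ is carried by the dictionary into monomials of strictly positive $t$-power (as $0<i<p$ is not a multiple of $p^2$), while the generators $\mu_2^{-j}$, $\lambda_2\mu_2^{-j}$ with $j$ outside the stated ranges, and the $t\mu_2$-multiples beyond the truncations $\rho(2k-3)$, $\rho(2k-2)$, are all represented by classes killed from the negative region; so $E^\infty(R^h)$ annihilates all of these.

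The step I expect to be the main obstacle is pinning down the numerical bounds $0<d<p^2-p$ in~(b) and $0<d<p$ in~(c), together with the precise truncation lengths: one must check for $j=dp^{2k-2}$ with $p\nmid d$ that the monomials $t^j(t\mu_2)^e$ with $0\le e<\rho(2k-3)$ are hit by $E^\infty(R^h)$ exactly when $d<p^2-p$, that is, exactly when $j+\rho(2k-3)\le\rho(2k-1)$ so that the source truncation length $\rho(2k-1)$ is large enough to cover the whole target truncation, and symmetrically in~(c). I anticipate this being a degree count of the same flavour as Lemmas~\ref{kkmodp-lem:7.8} and~\ref{kkmodp-lem:7.9}: write out the total degree and the $t$-filtration of each candidate monomial in the Tate $E^\infty$-term of Theorem~\ref{kkmodp-thm:7.12}(a), match it against the source $E^\infty$-term of Theorem~\ref{kkmodp-thm:7.12}(b), and verify that the combined constraints of surviving to $\hat E^\infty(S^1,\ell/p)$ and of lying in nonnegative $t$-power cut out precisely the submodules asserted in~(a)--(d). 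This bookkeeping, rather than any conceptual difficulty, is where the work lies.
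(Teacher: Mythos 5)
Your conceptual framework is right in outline: $R^h$ is the $t$-localization at the $E^2$-level, both spectral sequences carry compatible $\ell$-module structures, and a class that survives the homotopy fixed point spectral sequence fails to survive the Tate spectral sequence only if it is hit from the negative $t$-power region. Parts (a) and the general shape of (b)--(d) are also correctly identified. But there are two genuine problems.

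First, you repeatedly invoke ``the dictionary of Theorem~\ref{kkmodp-thm:7.12}(c)'' as the mechanism taking $\mu_2^{-j}$ to $t^j$, and you describe part~(d) by saying the classes $\mu_0^i$ are ``carried by the dictionary into monomials of strictly positive $t$-power.'' This conflates two different maps. The dictionary $G$ of Theorem~\ref{kkmodp-thm:7.12}(c) is the isomorphism $\hat E^\infty(S^1, \ell/p) \to \mu_2^{-1}E^\infty(S^1, \ell/p)$ induced by the cyclotomic structure map, and it sends $t^i$ to $\mu_2^{-i/p^2}$, involving a shift by $p^2$. That map enters only in Proposition~\ref{kkmodp-prop:8.4}, where the composite $R_* = E^\infty(R^h) \circ G_*$ is analyzed. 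Proposition~\ref{kkmodp-prop:8.2} concerns $E^\infty(R^h)$ alone, which is purely algebraic $t$-inversion; the formula one needs is the trivial rewriting $\mu_2^{-j}(t\mu_2)^i = t^j(t\mu_2)^{i-j}$, with \emph{no} shift by $p^2$. Similarly, for the summand $E(\lambda_2)\otimes\F_p\{\mu_0^i\mid 0<i<p\}\otimes P(\mu_2^{\pm1})$ in (d), the correct reason it maps to zero is that $\mu_0^i$ is killed by the $d^2$-differential $d^2(t^{-1}\epsilon_0\mu_0^{i-1}) \doteq \mu_0^i$ originating in negative $t$-power in $\hat E^*(S^1,\ell/p)$; the dictionary is not relevant.

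Second, and relatedly, the numerical verification that you defer --- pinning down $0<d<p^2-p$ in (b), $0<d<p$ in (c), and the truncation drops --- is the actual content of the proof, not a routine afterthought, and your outline of how to do it is not quite aimed correctly because of the conflation above. The actual check proceeds from the rewriting $\lambda_2^\delta\mu_2^{-j}(t\mu_2)^i \mapsto \lambda_2^\delta t^j(t\mu_2)^{i-j}$ and then splits into three cases by the sign and size of $d$: for $d<0$ the target monomial $t^j(t\mu_2)^e$ has $t$-exponent bounded above by $dp^{2k-2}+\rho(2k-3)<0$, so the whole target summand lies outside the image of $E^\infty(R^h)$ (which has nonnegative $t$-power); for $d>p^2-p$ the source summand lies entirely in total degree $\le 2p-2$ and is discarded; and for $0<d<p^2-p$ one uses $\rho(2k-1)-dp^{2k-2}>\rho(2k-3)$ to see that the truncated source covers the truncated target. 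This is a direct inequality check, more elementary than the modular-arithmetic arguments of Lemmas~\ref{kkmodp-lem:7.8} and~\ref{kkmodp-lem:7.9}, but it is not a step you can wave at --- it is the proof.
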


Notice that in the statements (b) and (c) above, we 
abuse notation and indentify the components of degree $>2p-2$
of $E^\infty(S^1, \ell/p)$ and $\mu_2^{-1}E^\infty(S^1,
\ell/p)$, using Theorem~\ref{kkmodp-thm:7.12}(b).

\begin{proof}
Consider the summands of $E^\infty(S^1, \ell/p)$ and $\hat
E^\infty(S^1, \ell/p)$ given in Theo\-rem~\ref{kkmodp-thm:7.12}.  Clearly, the first
term $E(\lambda_2) \otimes \F_p\{\mu_0^i \mid 0<i<p\} \otimes P(\mu_2)$
goes to zero (these classes are hit by $d^2$-differentials), and the
last term $E(\bar\epsilon_1, \lambda_2) \otimes P(t\mu_2)$ maps
identically to the same term.  This proves~(a) and part of~(d).

For each $k\ge1$ and $j = dp^{2k-2}$ with $p\nmid d$, the term
$E(\lambda_2) \otimes \F_p\{\mu_2^{-j}\} \otimes
P_{\rho(2k-1)}(t\mu_2)$ maps to the term $E(\lambda_2) \otimes
\F_p\{t^j\} \otimes P_{\rho(2k-3)}(t\mu_2)$, except that the target is
zero for $k=1$.  In symbols, the element $\lambda_2^\delta \mu_2^{-j}
(t\mu_2)^i$ maps to the element $\lambda_2^\delta t^j (t\mu_2)^{i-j}$.
If $d<0$, then the $t$-exponent in the target is bounded above by
$dp^{2k-2} + \rho(2k-3) < 0$, so the target lives in the right
half-plane and is essentially not hit by the source, which lives in the
left half-plane.  If $d>p^2-p$, then the total degree in the source is
bounded above by $(2p^2-1) - 2dp^{2k} + \rho(2k-1)(2p^2-2) < 2p-2$, so
the source lives in total degree $< (2p-2)$ and will be disregarded.
If $0<d<p^2-p$, then $\rho(2k-1)-dp^{2k-2} > \rho(2k-3)$ and
$-dp^{2k-2} < 0$, so the source surjects onto the target.  This
proves~(b) and part of~(d).

Lastly, for each $k\ge1$ and $j = dp^{2k-1}$ with $p\nmid d$, the term
$E(\bar\epsilon_1) \otimes \F_p\{\lambda_2 \mu_2^{-j}\} \otimes
P_{\rho(2k)}(t\mu_2)$ maps to the term $E(\bar\epsilon_1) \otimes
\F_p\{t^j \lambda_2\} \otimes P_{\rho(2k-2)}(t\mu_2)$.  The target is
zero for $k=1$.  If $d<0$, then $dp^{2k-1} + \rho(2k-2) < 0$ so the
target lives in the right half-plane.  If $d>p$, then $(2p-1) +
(2p^2-1) - 2dp^{2k+1} + \rho(2k)(2p^2-2) < 2p-2$, so the source lives
in total degree $< (2p-2)$.  If $0<d<p$, then $\rho(2k)-dp^{2k-1} >
\rho(2k-2)$ and $-dp^{2k-1} < 0$, so the source surjects onto the
target.  This proves~(c) and the remaining part of~(d).
\end{proof}

\begin{definition}
Let
\begin{equation*}
\begin{aligned}
A &= E(\bar\epsilon_1, \lambda_2) \otimes P(t\mu_2) \\
B_k &= E(\lambda_2) \otimes \F_p\{t^{dp^{2k-2}} \mid 0<d<p^2-p,
	p\nmid d\} \otimes P_{\rho(2k-3)}(t\mu_2) \\
C_k &= E(\bar\epsilon_1) \otimes \F_p\{t^{dp^{2k-1}} \lambda_2 \mid
	0<d<p\} \otimes P_{\rho(2k-2)}(t\mu_2)
\end{aligned}
\end{equation*}
for $k\ge2$ and let $D$ be the span of the remaining monomials in $\hat
E^\infty(S^1, \ell/p)$.  Let $B = \bigoplus_{k\ge2} B_k$ and $C =
\bigoplus_{k\ge2} C_k$.  Then $\hat E^\infty(S^1, \ell/p) = A \oplus B
\oplus C \oplus D$.
\end{definition}

\begin{proposition}\label{kkmodp-prop:8.4}
In degrees $> (2p-2)$, there are closed subgroups $\widetilde A =
E(\bar\epsilon_1, \lambda_2) \otimes P(v_2)$, $\widetilde B_k$, $\widetilde
C_k$ and $\widetilde D$ in $V(1)_* TF(\ell/p;p)$,
represented by the subgroups $A$, $B_k$,
$C_k$ and $D$ of $\hat E^\infty(S^1, \ell/p)$, respectively, such that
the homomorphism $R_* = V(1)_* R$ induced by the restriction map $R$
\begin{itemize}
  \item[\textup{(a)}]is the identity on $\widetilde A$;
  \item[\textup{(b)}]maps $\widetilde B_{k+1}$ surjectively onto $\widetilde B_k$ for all $k\ge2$;
  \item[\textup{(c)}]maps $\widetilde C_{k+1}$ surjectively onto $\widetilde C_k$ for all $k\ge2$;
  \item[\textup{(d)}]is zero on $\widetilde B_2$, $\widetilde C_2$ and $\widetilde D$.
\end{itemize}
In these degrees, $V(1)_* TF(\ell/p;p) \cong \widetilde A \oplus \widetilde B
\oplus \widetilde C \oplus \widetilde D$, where $\widetilde B = \prod_{k\ge2}
\widetilde B_k$ and $\widetilde C = \prod_{k\ge2} \widetilde C_k$.
\end{proposition}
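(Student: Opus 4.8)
The plan is to carry the associated-graded computation of Proposition~\ref{kkmodp-prop:8.2} through the comparison maps $\hat\Gamma_n$ and then to assemble the finite-level information into a statement about the profinite group $V(1)_* TF(\ell/p;p)$. By Corollary~\ref{kkmodp-cor:7.10} the groups $V(1)_* \thh(\ell/p)^{C_{p^n}}$ are finite in each degree, so the derived limit over the Frobenius tower vanishes and $V(1)_* TF(\ell/p;p) = \lim_n V(1)_* \thh(\ell/p)^{C_{p^n}}$; this is why the summands below are genuine \emph{closed} subgroups and why $\widetilde B$, $\widetilde C$ turn out to be products rather than sums. By Corollary~\ref{kkmodp-cor:6.10} the maps $\hat\Gamma_{n+1}$ identify $V(1)_* \thh(\ell/p)^{C_{p^n}}$ with $V(1)_* \thh(\ell/p)^{tC_{p^{n+1}}}$ in degrees $>2p-2$, compatibly with Frobenius, so in that range $V(1)_* TF(\ell/p;p)$ is modeled by $V(1)_* \thh(\ell/p)^{tS^1}$, with associated graded $\hat E^\infty(S^1,\ell/p) = A\oplus B\oplus C\oplus D$ by Theorem~\ref{kkmodp-thm:7.12}(a).

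Next I would identify $R_*$ on this model. Passing to the limit over $n$ in diagram~\eqref{kkmodp-eq:7.3}, the map $R_*$ in degrees $>2p-2$ is the composite that applies $G$ (the isomorphism of Lemma~\ref{kkmodp-lem:7.11}), inverts the $(2p-2)$-coconnected wrong-way map $(\hat\Gamma_1)^{hS^1}$, and then applies $R^h_*$ --- precisely the chain displayed just before Proposition~\ref{kkmodp-prop:8.2}. On associated gradeds this is $E^\infty(R^h)$ precomposed with the localization inverse and with $G$, so combining Proposition~\ref{kkmodp-prop:8.2} with Theorem~\ref{kkmodp-thm:7.12}(b),(c) one reads off the effect of $R_*$: it is the identity on $A$; it carries $B_{k+1}$ onto $B_k$ and $C_{k+1}$ onto $C_k$ for all $k\ge2$ (since $G$ sends the generator $t^{dp^{2k}}$ of $B_{k+1}$ to $\mu_2^{-dp^{2k-2}}$, which $E^\infty(R^h)$ sends onto the generator $t^{dp^{2k-2}}$ of $B_k$, surjectively on the attached truncated $P(t\mu_2)$-factor because $\rho$ is increasing; likewise for the $C_k$); and it is zero on $B_2$, $C_2$ and $D$, these being exactly the $k=1$ case and the remaining terms of Proposition~\ref{kkmodp-prop:8.2}.

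It remains to promote these associated-graded statements to honest subgroups. For $\widetilde A$, take the $P(v_2)$-submodule of $V(1)_* TF(\ell/p;p)$ spanned by $1$, by the image $\bar\epsilon_1$ of $\bar\epsilon_1^K$ under the cyclotomic trace $V(1)_* K(\ell/p)\to V(1)_* TC(\ell/p;p)\xr{\pi} V(1)_* TF(\ell/p;p)$, by a class $\lambda_2$ pulled back along $i$ from the image of $\pi$ in $V(1)_* TF(\ell;p)$ (available by \cite{kkmodp-AR02}), and by the product $\bar\epsilon_1\lambda_2$. All four lie in the image of $\pi$, hence are $R$-fixed by the exact sequence~\eqref{kkmodp-eq:8.1}; since $R_*$ is $P(v_2)$-linear it is the identity on $\widetilde A$. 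As the classes of $1,\bar\epsilon_1,\lambda_2,\bar\epsilon_1\lambda_2$ generate the free $P(v_2)$-module $A$ in $\hat E^\infty(S^1,\ell/p)$, we get $\widetilde A\cong E(\bar\epsilon_1,\lambda_2)\otimes P(v_2)$, represented by $A$. For $\widetilde B$, $\widetilde C$, $\widetilde D$ I would work at finite level via Corollary~\ref{kkmodp-cor:7.2}: apart from $u$-multiples, $\hat E^\infty(C_{p^{n+1}},\ell/p)$ splits off $B$- and $C$-type summands for $k=2,\dots,n+1$, the Frobenius maps are the identity on these even-column summands and kill the $u$-multiples, so the limit over $n$ picks out $\widetilde B=\prod_{k\ge2}\widetilde B_k$ and $\widetilde C=\prod_{k\ge2}\widetilde C_k$ together with a complementary summand $\widetilde D$; and the level-$n$ restriction map agrees with $E^\infty(R^h)$ --- by the same diagram~\eqref{kkmodp-eq:7.3} and the finite-level versions of Corollaries~\ref{kkmodp-cor:7.2} and~\ref{kkmodp-cor:7.5} --- hence sends each $B$- or $C$-type summand onto the next lower one and kills the bottom one. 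Taking the limit of these surjections (Mittag--Leffler, the finite-level maps being eventually surjective) yields parts~(a)--(d) and the direct-sum decomposition in degrees $>2p-2$.

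The step that will require the most care is the last one: the finite-level splittings of the groups $\hat E^\infty(C_{p^{n+1}},\ell/p)$ must be chosen coherently in $n$ so that the Frobenius and restriction maps respect them on the nose, not merely modulo lower Tate filtration, which is where the precise differential formulas of Corollaries~\ref{kkmodp-cor:7.2} and~\ref{kkmodp-cor:7.5}, the $P(t\mu_2)$-module structure, and the module structure over the spectral sequence for $\ell$ get used. Once that is arranged, matching the restriction map with $E^\infty(R^h)$ at each level and passing to the limit is routine.
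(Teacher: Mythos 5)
Your setup is exactly right and matches the paper: model $V(1)_* TF(\ell/p;p)$ in degrees $>2p-2$ by $\thh(\ell/p)^{tS^1}$ via $\hat\Gamma$, identify the associated graded with $\hat E^\infty(S^1,\ell/p) = A\oplus B\oplus C\oplus D$, and read off from Theorem~\ref{kkmodp-thm:7.12}(c) and Proposition~\ref{kkmodp-prop:8.2} that $R_*$ at the $E^\infty$-level is $E^\infty(R^h)\circ G_*$, which is the identity on $A$, sends $B_{k+1}$ onto $B_k$ and $C_{k+1}$ onto $C_k$, and is zero on $B_2$, $C_2$, $D$. Your construction of $\widetilde A$ from trace classes is also the paper's argument. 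Up to this point the two proofs coincide.

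The genuine gap is that you never actually construct $\widetilde B_k$, $\widetilde C_k$, $\widetilde D$ as subgroups of $V(1)_* TF(\ell/p;p)$ on which the induced $R_*$ (not just its associated graded) has the required behaviour. Your proposal to redo the analysis at each finite level $\hat E^\infty(C_{p^{n+1}},\ell/p)$ and pass to the limit trades one lifting problem for another: one then needs splittings coherent with both Frobenius and restriction across all $n$, and you explicitly flag this as ``the step that will require the most care'' without resolving it. But that step \emph{is} the proof. The paper (following \cite{kkmodp-AR02}*{Thm.~7.7}) works directly in $V(1)_* TF$ and resolves the extension problem by a specific nested construction: it filters $B_k$ as $\bigoplus_{n=0}^{k-2}B^n_{k-n}$, where $B^0_k = B_k\cap\ker(E^\infty(R^h)G_*)$ is the part killed immediately and $B^n_k$ is chosen inductively to map isomorphically onto $B^{n-1}_k$; it then lifts $B^0_k$ to $\widetilde B^0_k\subset\im(R_*)\cap\ker(R_*)$ (which requires the argument from~\cite{kkmodp-AR02}*{p.~31}) and inductively lifts $B^n_k$ to $\widetilde B^n_k\subset\im(R_*)$ mapping isomorphically onto $\widetilde B^{n-1}_k$, using that the image of $R_*$ equals the image of its restriction to $\im(R_*)$~(\cite{kkmodp-AR02}*{p.~30}). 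This nested structure is what makes parts~(b)--(d) hold on the nose and gives degreewise finite, hence closed, subgroups. You also leave $\widetilde D$ unaddressed: the paper needs \cite{kkmodp-AR02}*{Lem.~7.3}, that representatives of $\ker(R^h_*)$ in $E^\infty(S^1,\ell/p)$ equal $\ker(E^\infty(R^h))$, to choose basis lifts of $D$ inside $\ker(R_*)$ and then take the closed span. Without these ingredients the associated-graded computation does not by itself yield the stated direct-sum decomposition or the surjectivity claims at the level of actual homotopy groups.
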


\begin{proof}
The proof is the same as the proof of \cite{kkmodp-AR02}*{Thm.~7.7}, except that
in the present paper we work with the Tate model $\thh(\ell/p)^{tS^1}$
for $TF(\ell/p; p)$, in place of the homotopy fixed point model
$\thh(\ell/p)^{hS^1}$.  The computations are made in $V(1)$-homotopy,
and we disregard all classes in total degrees $\le (2p-2)$.
For example with this convention we write 
$\mu_2^{-1} E^\infty(S^1, \ell/p) \cong E^\infty(S^1,
\ell/p)$, using the same abuse of notation as in
Proposition~\ref{kkmodp-prop:8.2}.

In these terms, the restriction homomorphism $R_*$ is given at the
level of $E^\infty$-terms as the composite of the
isomorphism 
\begin{equation*}
G_* \:  \hat E^\infty(S^1, \ell/p)\to \mu_2^{-1} E^\infty(S^1, \ell/p)
\cong E^\infty(S^1, \ell/p) 
\end{equation*}
and the map
\begin{equation*}
E^\infty(R^h) \: 
E^\infty(S^1, \ell/p) \to \hat E^\infty(S^1, \ell/p)\,.
\end{equation*}
As an endomorphism of $\hat E^\infty(S^1, \ell/p)$, this composite
$E^\infty(R^h)G_*$
is the identity on $A$, maps $B_{k+1}$ onto $B_k$ and
$C_{k+1}$ onto $C_k$ for all $k\ge2$, and is zero on $B_2$, $C_2$
and $D$, by Theorem~\ref{kkmodp-thm:7.12}(c) and
Proposition~\ref{kkmodp-prop:8.2}.
The task is to find closed lifts of these groups to $V(1)_* TF(\ell/p; p)$
such that $R_*$ has similar properties.

Let $\widetilde A = E(\bar\epsilon_1, \lambda_2) \otimes P(v_2) \subset V(1)_*
TF(\ell/p; p)$ be the (degreewise finite, hence closed) subalgebra
generated by the images of the classes $\bar\epsilon_1^K$, $\lambda_2$
and $v_2$ in $V(1)_* K(\ell/p)$.  Then $\widetilde A$ lifts $A$ and consists
of classes in the image of the trace map from $V(1)_* K(\ell/p)$.
Hence $R_*$ is the identity on $\widetilde A$.

We fix $k\ge2$ and choose, for all $n\ge0$, a subgroup
$B^n_k\subset B_{k+n}$, as follows. We take 
\begin{equation*}
  \begin{split}
  B^0_k&=B_k\cap\ker(E^\infty(R^h)G_*) \\
  &=\begin{cases}
  B_2&\textup{ for $k=2$,}\\
  E(\lambda_2) \otimes
  \displaystyle{
  \bigoplus_{0<d<p^2-p,\ p\nmid d}
  }
  \F_p\{t^{dp^{2k-2}}\} \otimes
	 P_{dp^{2k-4}+\rho(2k-5)}^{\rho(2k-3)-1}(t\mu_2)
  &\textup{ for $k\ge3$,}
  \end{cases}
\end{split}
\end{equation*}
where $P_a^b(t\mu_2)=\F_p\{(t\mu_2)^c\,|\,a \le c \le
b\}$. We proceed by induction on $n$ for $n\ge1$, 
choosing a subgroup $B^n_k$ of $B_{k+n}$ mapping
isomorphically onto $B_k^{n-1}$ under $E^\infty(R^h)G_*$
(such a group exists by Theorem~\ref{kkmodp-thm:7.12}(c) and
Proposition~\ref{kkmodp-prop:8.2}(b)). We then have
\begin{equation*}
  B_k=\bigoplus_{n=0}^{k-2}B^n_{k-n}\,.
\end{equation*}
By the argument given on top of page 31 of~\cite{kkmodp-AR02}, we
can choose a lift $\widetilde B_k^0$ of $B_k^0$ with 
\begin{equation*}
  \widetilde B_k^0\subset \im(R_*)\cap \ker(R_*)
\end{equation*}
in $V(1)_* TF(\ell/p; p)$.
By induction on $n\ge1$, we choose a lift
$\widetilde B_k^n\subset \im(R_*)$ of $B_k^n$   
mapping isomorphically onto $\widetilde B_k^{n-1}$ under $R_*$.
Such a choice is possible
since the image of $R_*$ on
$V(1)_*TF(\ell/p;p)$ equals the image of its restriction to
$\im(R_*)$, see~\cite{kkmodp-AR02}*{p.\,30}. 
Now
\begin{equation*}
  \widetilde B_k=\bigoplus_{n=0}^{k-2}\widetilde B_{k-n}^n
\end{equation*}
is a (degreewise finite, hence closed) lift of $B_k$ with $R_*(\widetilde
B_2)=0$
and $R_*(\widetilde B_k)=\widetilde B_{k-1}$ for $k\ge3$. 

To construct $\widetilde C_k$ we proceed as for $\widetilde B_k$ above,
starting with $C_2^0 = C_2$ and
\begin{equation*}
  \begin{split}
  C^0_k&=C_k\cap\ker(E^\infty(R^h)G_*) \\
  &= E(\bar\epsilon_1) \otimes \bigoplus_{0<d<p}
  \F_p\{t^{dp^{2k-1}}\lambda_2\} \otimes
	P_{dp^{2k-3}+\rho(2k-4)}^{\rho(2k-2)-1}(t\mu_2)
\end{split}
\end{equation*}
for $k\ge3$,
and using Theorem~\ref{kkmodp-thm:7.12}(c) and
Proposition~\ref{kkmodp-prop:8.2}(c) to choose $C_k^n$ for
$n\ge1$.

It remains to construct $\widetilde D$.
By Proposition~\ref{kkmodp-prop:8.2}(d), the isomorphism $G_*$ maps $D$ into $\ker
(E^\infty(R^h))$.  By \cite{kkmodp-AR02}*{Lem.~7.3} the representatives in
$E^\infty(S^1, \ell/p)$ of the kernel of $R^h_*$ equal the kernel
of $E^\infty(R^h)$.  It follows that the representatives in $\hat
E^\infty(S^1, \ell/p)$ of the kernel of $R_*$ are mapped isomorphically
by $G_*$ to $\ker(E^\infty(R^h))$.
Hence we can pick a vector space basis for $D$, choose a representative
in $\ker(R_*) \subset V(1)_* TF(\ell/p; p)$ of each basis element, and
let $\widetilde D \subset V(1)_* TF(\ell/p; p)$ be the closure of the vector space
spanned by these chosen representatives.  This closure is contained in
$\ker(R_*)$ since $R_*$ is continuous.  Hence $R_*$ is zero
on $\widetilde D$.
\end{proof}

\begin{proposition}\label{kkmodp-prop:8.6}
In degrees $>(2p-2)$ there are isomorphisms
\begin{equation*}
\begin{aligned}
\ker(R_*-1) &\cong \widetilde A \oplus \lim_k \widetilde B_k
	\oplus \lim_k \widetilde C_k \\
	&\cong E(\bar\epsilon_1, \lambda_2) \otimes P(v_2) \\
&\qquad \oplus E(\lambda_2) \otimes \F_p\{t^d \mid 0<d<p^2-p,
	p\nmid d\} \otimes P(v_2) \\
&\qquad \oplus E(\bar\epsilon_1) \otimes \F_p\{t^{dp} \lambda_2 \mid
	0<d<p\} \otimes P(v_2)
\end{aligned}
\end{equation*}
and $\cok(R_*-1) \cong \widetilde A = E(\bar\epsilon_1, \lambda_2)
\otimes P(v_2)$.  Hence there is an isomorphism
\begin{equation*}
\begin{aligned}
V(1)_* TC(\ell/p;p) &\cong
	E(\partial, \bar\epsilon_1, \lambda_2) \otimes P(v_2) \\
&\qquad \oplus E(\lambda_2) \otimes \F_p\{t^d \mid 0<d<p^2-p,
	p\nmid d\} \otimes P(v_2) \\
&\qquad \oplus E(\bar\epsilon_1) \otimes \F_p\{t^{dp} \lambda_2 \mid
	0<d<p\} \otimes P(v_2)
\end{aligned}
\end{equation*}
in these degrees, where $\partial$ has degree $-1$ and represents the
image of $1$ under the connecting map $\partial$
in~\eqref{kkmodp-eq:8.1}.
\end{proposition}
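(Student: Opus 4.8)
The plan is to read off $V(1)_* TC(\ell/p;p)$ from the long exact sequence~\eqref{kkmodp-eq:8.1} using the structural description of $V(1)_* TF(\ell/p;p)$ and of $R_*$ furnished by Proposition~\ref{kkmodp-prop:8.4}. In degrees $>(2p-2)$ we have $V(1)_* TF(\ell/p;p)\cong\widetilde A\oplus\widetilde B\oplus\widetilde C\oplus\widetilde D$ with $\widetilde B=\prod_{k\ge2}\widetilde B_k$ and $\widetilde C=\prod_{k\ge2}\widetilde C_k$, and $R_*$ preserves each of these four summands, so $R_*-1$ is block diagonal and I would compute its kernel and cokernel one summand at a time. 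On $\widetilde A$ one has $R_*=\mathrm{id}$, so $R_*-1=0$ contributes $\widetilde A$ to both the kernel and the cokernel. On $\widetilde D$ one has $R_*=0$, so $R_*-1$ is an isomorphism and contributes nothing. On $\widetilde B$ the map $R_*$ is the shift $(b_k)_{k\ge2}\mapsto(R_*b_{k+1})_{k\ge2}$ along the surjections $R_*\colon\widetilde B_{k+1}\to\widetilde B_k$, so the standard exact sequence $0\to\lim_k\widetilde B_k\to\prod_k\widetilde B_k\xr{1-\mathrm{shift}}\prod_k\widetilde B_k\to\lim^1_k\widetilde B_k\to0$ identifies $\ker(R_*-1|_{\widetilde B})=\lim_k\widetilde B_k$ and $\cok(R_*-1|_{\widetilde B})=\lim^1_k\widetilde B_k$; the transition maps being surjective, the tower is Mittag--Leffler and $\lim^1_k\widetilde B_k=0$, and likewise for $\widetilde C$. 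Assembling the four contributions gives $\cok(R_*-1)\cong\widetilde A$ and $\ker(R_*-1)\cong\widetilde A\oplus\lim_k\widetilde B_k\oplus\lim_k\widetilde C_k$ in degrees $>(2p-2)$.

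The next step is to identify the two inverse limits. The transition map $R_*\colon\widetilde B_{k+1}\to\widetilde B_k$ is $P(v_2)$-linear, since it is modeled by the composite $E^\infty(R^h)\circ G_*$ of maps of module spectral sequences over the spectral sequence for $\ell$ and $t\mu_2$ represents $v_2$; by Theorem~\ref{kkmodp-thm:7.12}(c) and Proposition~\ref{kkmodp-prop:8.2}(b) it carries the generator indexed by $t^{dp^{2k}}$ in $\widetilde B_{k+1}$ to the generator indexed by $t^{dp^{2k-2}}$ in $\widetilde B_k$, and on the $(t\mu_2)$-factors it is, up to a unit, the canonical surjection $P_{\rho(2k-1)}(t\mu_2)\to P_{\rho(2k-3)}(t\mu_2)$. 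Because the truncation heights $\rho(2k-3)$ tend to infinity, these truncated pieces assemble in the limit into the free module $P(v_2)$, while the compatible families formed by the classes $t^{dp^{2k-2}}$ for $0<d<p^2-p$, $p\nmid d$, assemble into $\F_p$-basis elements which I would name $t^d$; this gives $\lim_k\widetilde B_k\cong E(\lambda_2)\otimes\F_p\{t^d\mid 0<d<p^2-p,\ p\nmid d\}\otimes P(v_2)$. Running the same argument with Proposition~\ref{kkmodp-prop:8.2}(c) yields $\lim_k\widetilde C_k\cong E(\bar\epsilon_1)\otimes\F_p\{t^{dp}\lambda_2\mid 0<d<p\}\otimes P(v_2)$, and together with $\widetilde A=E(\bar\epsilon_1,\lambda_2)\otimes P(v_2)$ this is the stated description of $\ker(R_*-1)$.

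Finally, the long exact sequence~\eqref{kkmodp-eq:8.1} breaks into short exact sequences
\begin{equation*}
0\to\cok(R_*-1)_{n+1}\xr{\partial}V(1)_n TC(\ell/p;p)\xr{\pi}\ker(R_*-1)_n\to0
\end{equation*}
for $n>(2p-2)$ (so that degrees $n$ and $n+1$ both exceed $2p-2$ and the computations above apply). These are short exact sequences of $P(v_2)$-modules, and since $\ker(R_*-1)$ is a \emph{free} $P(v_2)$-module the sequence splits $P(v_2)$-linearly. Writing $\partial$ also for the image under the connecting map of the generator $1\in\widetilde A=\cok(R_*-1)$, its $P(v_2)$-submodule $\partial\cdot E(\bar\epsilon_1,\lambda_2)\otimes P(v_2)$ together with the copy of $\widetilde A$ coming from $\ker(R_*-1)$ forms $E(\partial,\bar\epsilon_1,\lambda_2)\otimes P(v_2)$, and adjoining $\lim_k\widetilde B_k$ and $\lim_k\widetilde C_k$ produces the asserted formula for $V(1)_* TC(\ell/p;p)$ in degrees $>(2p-2)$; the low-degree classes $\partial$ and $\partial\bar\epsilon_1$ appearing in that formula lie outside this range and are confirmed separately. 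The step I expect to be the main obstacle is the identification of the inverse limits in the second paragraph: one must verify, from the explicit transition maps and the compatibility diagram~\eqref{kkmodp-eq:7.7}, that the increasing truncation heights $\rho(2k-3)$ and $\rho(2k-2)$ genuinely combine into an untruncated $P(v_2)$ in the limit, so that $v_2$ acts freely rather than nilpotently, and that the various $t$-power generators organize into exactly the indicated $\F_p$-bases; everything else is the formal $\lim$--$\lim^1$ bookkeeping for the restriction map.
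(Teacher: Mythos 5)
Your proof is correct and takes essentially the same route as the paper: block-diagonal analysis of $R_*-1$ from Proposition~\ref{kkmodp-prop:8.4}, the Milnor $\lim$--$\Rlim$ sequence with surjective (hence Mittag--Leffler) transition maps, identification of the inverse limits as free $P(v_2)$-modules via the growing truncation heights $\rho(2k-3)$ and $\rho(2k-2)$, and the short exact sequence from~\eqref{kkmodp-eq:8.1}. The one detail you flesh out that the paper leaves implicit is the $P(v_2)$-linear splitting of that short exact sequence, which you justify correctly by the freeness of $\ker(R_*-1)$ over $P(v_2)$.
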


\begin{proof}
By Proposition~\ref{kkmodp-prop:8.4}, the homomorphism $R_*-1$ is zero on $\widetilde A$
and an isomorphism on $\widetilde D$.  Furthermore, there is an exact
sequence
\begin{equation*}
0 \to \lim_k \widetilde B_k \to \prod_{k\ge2} \widetilde B_k
\xr{R_*-1} \prod_{k\ge2} \widetilde B_k \to \Rlim_k \widetilde B_k \to 0
\end{equation*}
and similarly for the $C$'s.  The derived limit on the right vanishes
since each $\widetilde B_{k+1}$ surjects onto $\widetilde B_k$.

Multiplication by $t\mu_2$ in each $B_k$ is realized by multiplication by
$v_2$ in $\widetilde B_k$.  Each $\widetilde B_k$ is a sum of $2(p-1)^2$
cyclic $P(v_2)$-modules, and since $\rho(2k-3)$ grows to infinity with $k$
their limit is a free $P(v_2)$-module of the same rank, with the indicated
generators $t^d$ and $t^d \lambda_2$ for $0 < d < p^2-p$, $p \nmid d$.
The argument for the $C$'s is practically the same.

The long exact sequence~\eqref{kkmodp-eq:8.1} yields the short exact sequence
\begin{equation*}
0 \to \Sigma^{-1} \cok(R_*-1) \xr{\partial} V(1)_* TC(\ell/p;p) \xr{\pi}
\ker(R_*-1) \to 0 \,,
\end{equation*}
from which the formula for the middle term follows.
\end{proof}

\begin{remark}
A more obvious set of $E(\lambda_2) \otimes P(v_2)$-module generators
for $\lim_k \widetilde B_k$ would be the classes $t^{dp^2}$ in $B_2
\cong \widetilde B_2$, for $0 < d < p^2-p$, $p \nmid d$.  
We have a commutative diagram
\begin{equation*}
  \xymatrix{
  TF(\ell/p;p)\ar[r]^-{\hat\Gamma} \ar[d] &
   \thh(\ell/p)^{tS^1} \ar[d]^{FG} \\
   \thh(\ell/p)^{C_p} \ar[r]^-{G_1\hat\Gamma_2}  &
   (\rho_p^*\thh(\ell/p)^{tC_p})^{hC_p}\,.
  }
\end{equation*}
Under the left-hand
canonical map $TF(\ell/p;p) \to \thh(\ell/p)^{C_p}$, modeled here by
\begin{equation*}
FG \: \thh(\ell/p)^{tS^1} \to
(\rho_p^*\thh(\ell/p)^{tC_p})^{hS^1}\to
(\rho_p^*\thh(\ell/p)^{tC_p})^{hC_p} \,,
\end{equation*}
the class $t^{dp^2}$ maps to
the class $\mu_2^{-d}$.  Since we are only concerned with degrees
$> (2p-2)$ we may equally well use its $v_2$-power
multiple
$(t\mu_2)^d \cdot \mu_2^{-d} = t^d$ as generator, with the advantage
that it is in the image of the localization map 
\begin{equation*}
\thh(\ell/p)^{hC_p}
\to (\rho_p^*\thh(\ell/p)^{tC_p})^{hC_p}\,.
\end{equation*}
Hence the class denoted $t^d$ in
$\lim_k \widetilde B_k$ is chosen so as to map under
$TF(\ell/p;p) \to
\thh(\ell/p)^{hC_p}$ to $t^d$ in $E^\infty(C_p, \ell/p)$.  Similarly,
the class denoted $t^{dp} \lambda_2$ in $\lim_k \widetilde C_k$ is chosen
so as to map to $t^{dp} \lambda_2$ in $E^\infty(C_p, \ell/p)$.
\end{remark}

\medskip

The map $\pi \: \ell/p \to \Z/p$ is $(2p-2)$-connected, hence induces
$(2p-1)$-connected maps $\pi_* \: K(\ell/p) \to K(\Z/p)$ and $\pi_*  \: 
V(1)_* TC(\ell/p;p) \to V(1)_* TC(\Z/p;p)$, by
\cite{kkmodp-BM94}*{Prop.~10.9} and
\cite{kkmodp-Du97}*{p.~224}.  Here $TC(\Z/p;p) \simeq H\Z_p \vee \Sigma^{-1} H\Z_p$ and
we have an isomorphism 
$V(1)_* TC(\Z/p;p) \cong E(\partial, \bar\epsilon_1)$, so we can recover
$V(1)_* TC(\ell/p;p)$ in degrees $\le (2p-2)$ from this map.

\begin{theorem}\label{kkmodp-thm:8.8}
There is an isomorphism of $P(v_2)$-modules
\begin{equation*}
\begin{aligned}
V(1)_* TC(\ell/p;p) &\cong P(v_2) \otimes E(\partial, \bar\epsilon_1,
	\lambda_2) \\
&\qquad \oplus P(v_2) \otimes E(\dlog v_1) \otimes \F_p\{t^d
	v_2 \mid 0<d<p^2-p, p\nmid d\} \\
&\qquad \oplus P(v_2) \otimes E(\bar\epsilon_1) \otimes \F_p\{t^{dp}
	\lambda_2 \mid 0<d<p\}
\end{aligned}
\end{equation*}
where $\dlog v_1 \cdot t^dv_2 =t^d \lambda_2$.  
The degrees are $|\partial| =
-1$, $|\bar\epsilon_1| = |\lambda_1| = 2p-1$, $|\lambda_2| =
2p^2-1$, $|v_2| = 2p^2-2$, $|t| = -2$ and $|\dlog v_1| = 1$.
\end{theorem}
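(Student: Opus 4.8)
The plan is to glue two complementary descriptions of $V(1)_* TC(\ell/p;p)$. In degrees $>(2p-2)$, Proposition~\ref{kkmodp-prop:8.6} already computes it; in degrees $\le (2p-2)$, the $(2p-1)$-connected comparison map $\pi_* \: V(1)_* TC(\ell/p;p) \to V(1)_* TC(\Z/p;p) \cong E(\partial,\bar\epsilon_1)$ identifies it, so that in that range it is $\F_p\{1\}$, $\F_p\{\partial\}$ and $\F_p\{\partial\bar\epsilon_1\}$ in degrees $0$, $-1$ and $2p-2$, and zero in all other degrees $\le(2p-2)$. By naturality of the long exact sequence~\eqref{kkmodp-eq:8.1} along $\pi$, the class $\partial$ occurring here is the image of $1$ under the connecting map, consistently with the notation of Proposition~\ref{kkmodp-prop:8.6}.

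First I would upgrade this to a statement about $P(v_2)$-modules. The defining fibre sequence for $TC$ gives a short exact sequence of $V(1)_*S$-modules, hence of $P(v_2)$-modules,
\[
0 \to \Sigma^{-1}\cok(R_*-1) \xr{\partial} V(1)_* TC(\ell/p;p) \xr{\pi} \ker(R_*-1) \to 0 \,,
\]
whose outer terms are identified by Proposition~\ref{kkmodp-prop:8.6} in degrees $>(2p-2)$ as \emph{free} $P(v_2)$-modules; hence in that range the sequence splits over $P(v_2)$ and multiplication by $v_2$ is injective. Since $|v_2| = 2p^2-2 > 2p-2$, every $v_2$-multiple of the low-degree classes $1,\partial,\partial\bar\epsilon_1$ lies in the range covered by Proposition~\ref{kkmodp-prop:8.6}, and using $V(1)_*S$-linearity of the connecting map (so that $\partial(v_2^j) = v_2^j\partial$, etc.) one identifies these multiples with the nonzero classes $v_2^j$, $\partial v_2^j$, $\partial\bar\epsilon_1 v_2^j$, $j\ge1$, appearing there. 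Thus the $P(v_2)$-submodule generated by the low-degree classes, together with all of $E(\partial,\bar\epsilon_1,\lambda_2)\otimes P(v_2)$ supplied by Proposition~\ref{kkmodp-prop:8.6} in degrees $>(2p-2)$, assembles into the free module $P(v_2)\otimes E(\partial,\bar\epsilon_1,\lambda_2)$, whose elements of degree $\le(2p-2)$ are exactly $1$, $\partial$ and $\partial\bar\epsilon_1$.

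Next I would repackage the two remaining summands of Proposition~\ref{kkmodp-prop:8.6}. Since $|t^{dp}\lambda_2| = 2p^2-1-2dp \ge 2p-1$ for $0<d<p$, the summand $E(\bar\epsilon_1)\otimes\F_p\{t^{dp}\lambda_2 \mid 0<d<p\}\otimes P(v_2)$ lies entirely in degrees $>(2p-2)$ and transcribes unchanged. For the summand $E(\lambda_2)\otimes\F_p\{t^d \mid 0<d<p^2-p,\ p\nmid d\}\otimes P(v_2)$, the $P(v_2)$-generator $t^d$ has negative degree $-2d$ and so falls below the range of Proposition~\ref{kkmodp-prop:8.6}; the lowest-degree class of this submodule in degree $>(2p-2)$ is $t^d v_2$, of degree $2p^2-2-2d \ge 2p$, and the second $P(v_2)$-generator is $\lambda_2 t^d = t^d\lambda_2$. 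Adopting the convention $\dlog v_1\cdot t^d v_2 := t^d\lambda_2$ for a degree-$1$ symbol $\dlog v_1$ — purely formal, since $V(1)_1 TC(\ell/p;p)=0$ — this submodule becomes $P(v_2)\otimes E(\dlog v_1)\otimes\F_p\{t^d v_2\}$ in the degrees where it is supported. Collecting the three summands, and checking that every monomial of the asserted answer has degree either $\le(2p-2)$ and equal to one of $1,\partial,\partial\bar\epsilon_1$, or $>(2p-2)$ and present in Proposition~\ref{kkmodp-prop:8.6}, completes the argument.

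The step I expect to be the main obstacle is the bookkeeping at the seam between degrees $\le(2p-2)$ and $>(2p-2)$: one must check that the low-degree description from $\pi_*$ and the high-degree description from Proposition~\ref{kkmodp-prop:8.6} fit together with no class lost, none double-counted, and no hidden $P(v_2)$-extension bridging the two ranges. The splitting of the $\ker/\cok$ short exact sequence over $P(v_2)$ together with the injectivity of multiplication by $v_2$ on the low-degree classes is precisely what rules out such an extension.
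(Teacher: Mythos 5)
Your proof is correct, and it is essentially the same argument as the paper's, just spelled out in more detail. The paper's own proof is quite terse (it simply observes that the generators $t^d$ fall outside the range of Proposition~\ref{kkmodp-prop:8.6} while their $\lambda_2$- and $v_2$-multiples survive, and that the low-degree part is recovered from $\pi_* \: V(1)_*TC(\ell/p;p) \to V(1)_*TC(\Z/p;p) \cong E(\partial,\bar\epsilon_1)$), whereas you elaborate the $P(v_2)$-linearity of the connecting map and the injectivity of $v_2$-multiplication across the seam at degree $2p-2$, which is exactly the bookkeeping the paper leaves implicit.
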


The notation $\dlog v_1$ for the multiplier $v_2^{-1} \lambda_2$ is
suggested by the relation $v_1 \cdot \dlog p = \lambda_1$ in $V(0)_*
TC(\Z_{(p)}|\Q;p)$.

\begin{proof}
Only the additive generators $t^d$ for $0<d<p^2-p$, $p\nmid d$ from
Proposition~\ref{kkmodp-prop:8.6} do not appear in $V(1)_*
TC(\ell/p;p)$, but their
multiples by $\lambda_2$ and positive powers of $v_2$ do.  This leads
to the given formula, where $\dlog v_1 \cdot t^d v_2$ must be read as
$t^d\lambda_2$.
\end{proof}

By \cite{kkmodp-HM97}*{Th.~C} the cyclotomic trace map of \cite{kkmodp-BHM93} induces
cofiber sequences
\begin{equation*}
K(B_p)_p \xr{trc} TC(B;p)_p \xr{g} \Sigma^{-1} H\Z_p \to
\Sigma K(B_p)_p 
\end{equation*}
for each connective $S$-algebra $B$ with $\pi_0(B_p) = \Z_p$ or $\Z/p$,
and thus long exact sequences
\begin{equation*}
\dots \to V(1)_* K(B_p) \xr{trc} V(1)_* TC(B;p) \xr{g} \Sigma^{-1}
E(\bar\epsilon_1) \to \dots \,.
\end{equation*}
This uses the identifications $W(\Z_p)_F \cong W(\Z/p)_F \cong \Z_p$ of
Frobenius coinvariants of rings of Witt vectors, and applies in particular for $B
= H\Z_{(p)}$, $H\Z/p$, $\ell$ and $\ell/p$.

\begin{theorem}\label{kkmodp-thm:8.10}
There is an isomorphism of $P(v_2)$-modules
\begin{equation*}
\begin{aligned}
V(1)_* K(\ell/p) &\cong P(v_2) \otimes E(\bar\epsilon_1) \otimes
	\F_p\{1, \partial\lambda_2, \lambda_2, \partial v_2\} \\
&\qquad \oplus P(v_2) \otimes E(\dlog v_1) \otimes \F_p\{t^d
	v_2 \mid 0<d<p^2-p, p\nmid d\} \\
&\qquad \oplus P(v_2) \otimes E(\bar\epsilon_1) \otimes \F_p\{t^{dp}
	\lambda_2 \mid 0<d<p\} \,.
\end{aligned}
\end{equation*}
This is a free $P(v_2)$-module of rank~$(2p^2-2p+8)$ and of zero Euler
characteristic.
\end{theorem}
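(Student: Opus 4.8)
The plan is to deduce the theorem from the computation of $V(1)_* TC(\ell/p;p)$ in Theorem~\ref{kkmodp-thm:8.8} by running the long exact sequence in $V(1)$-homotopy attached to the cofiber sequence $K(\ell/p)_p \xr{trc} TC(\ell/p;p)_p \xr{g} \Sigma^{-1} H\Z_p$ of \cite{kkmodp-HM97}*{Th.~C}. Since $V(1)_*(\Sigma^{-1} H\Z_p) = \Sigma^{-1} E(\bar\epsilon_1)$ is nonzero only in degrees $-1$ and $2p-2$, with generators $\Sigma^{-1}1$ and $\Sigma^{-1}\bar\epsilon_1$, the whole calculation reduces to identifying the map $g$ on $V(1)_{-1} TC(\ell/p;p)$ and $V(1)_{2p-2} TC(\ell/p;p)$ and then reading off its kernel.

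First I would determine $g$. Inspecting the degrees of the monomials in Theorem~\ref{kkmodp-thm:8.8} --- an elementary size-and-parity check shows that no $B$- or $C$-summand, and no monomial of $P(v_2)\otimes E(\partial,\bar\epsilon_1,\lambda_2)$ other than $\partial$ and $\partial\bar\epsilon_1$, can sit in these two degrees --- one sees that $V(1)_{-1} TC(\ell/p;p) = \F_p\{\partial\}$ and $V(1)_{2p-2} TC(\ell/p;p) = \F_p\{\partial\bar\epsilon_1\}$. To compute $g$ on these classes I would use naturality of the cofiber sequence of \cite{kkmodp-HM97}*{Th.~C} along $\pi\:\ell/p\to H\Z/p$. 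For $B = H\Z/p$ this cofiber sequence exhibits $TC(\Z/p;p)_p$ as $H\Z_p \vee \Sigma^{-1}H\Z_p$, with $trc$ the inclusion of $K(\Z/p)_p \simeq H\Z_p$ and $g$ the projection onto $\Sigma^{-1}H\Z_p$; hence on $V(1)_* TC(\Z/p;p) = E(\partial,\bar\epsilon_1)$ the map $g$ sends $\partial$ and $\partial\bar\epsilon_1$ isomorphically onto $\Sigma^{-1}1$ and $\Sigma^{-1}\bar\epsilon_1$ and annihilates $1$ and $\bar\epsilon_1$. Since $\pi_*$ carries the classes $\partial$ and $\partial\bar\epsilon_1$ of $V(1)_* TC(\ell/p;p)$ to the like-named classes for $\Z/p$ and is the identity on the common target $\Sigma^{-1}E(\bar\epsilon_1)$, the same formulas hold over $\ell/p$. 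Thus $g$ is an isomorphism in degrees $-1$ and $2p-2$ and zero in all other degrees; in particular $g$ is surjective, the connecting homomorphism in the long exact sequence vanishes, and $trc$ identifies $V(1)_* K(\ell/p)$ with $\ker(g)$ as a $P(v_2)$-module.

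It then remains to read off $\ker(g)$ from Theorem~\ref{kkmodp-thm:8.8}. Because $g$ is $P(v_2)$-linear and its target is $v_2$-torsion, it kills $v_2\cdot V(1)_* TC(\ell/p;p)$ entirely; combined with the previous paragraph this shows that $g$ vanishes on every free $P(v_2)$-summand of $V(1)_* TC(\ell/p;p)$ except $P(v_2)\{\partial\}$ and $P(v_2)\{\partial\bar\epsilon_1\}$, on each of which it is an isomorphism on the bottom generator and zero on all higher $v_2$-powers. Hence $\ker(g)$ is the direct sum of all the other $P(v_2)$-summands --- in particular $P(v_2)\{\partial\lambda_2\}$, $P(v_2)\{\partial\bar\epsilon_1\lambda_2\}$ and the whole $B$- and $C$-parts survive unchanged --- together with $v_2\cdot P(v_2)\{\partial\} = P(v_2)\{\partial v_2\}$ and $v_2\cdot P(v_2)\{\partial\bar\epsilon_1\} = P(v_2)\{\bar\epsilon_1\partial v_2\}$. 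Regrouping the eight summands originating from $P(v_2)\otimes E(\partial,\bar\epsilon_1,\lambda_2)$ as $P(v_2)\otimes E(\bar\epsilon_1)\otimes\F_p\{1,\partial\lambda_2,\lambda_2,\partial v_2\}$ yields precisely the stated formula, which is manifestly free over $P(v_2)$. Finally I would tally the generators, $8 + 2(p-1)^2 + 2(p-1) = 2p^2-2p+8$, and, listing their degrees from $|\partial|=-1$, $|\bar\epsilon_1|=2p-1$, $|\lambda_2|=2p^2-1$, $|v_2|=2p^2-2$, $|t|=-2$ and $|\dlog v_1|=1$, observe that exactly half lie in even and half in odd degrees, so the Euler characteristic vanishes.

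The only genuinely nontrivial step is the identification of $g$ on $\partial$ and $\partial\bar\epsilon_1$ by descent along $\pi\:\ell/p\to H\Z/p$; the rest is bookkeeping. The point that requires care is distinguishing a $P(v_2)$-summand that $g$ annihilates outright, which passes to $\ker(g)$ unchanged, from the two summands on which $g$ is nonzero only on the bottom class, which pass to $\ker(g)$ as $v_2$ times themselves --- this is exactly the mechanism by which $\partial$ is replaced by $\partial v_2$ in the final answer.
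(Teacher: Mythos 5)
Your proposal is correct and follows essentially the same route as the paper's proof: both deduce the theorem from Theorem~\ref{kkmodp-thm:8.8} via the cofiber sequence of \cite{kkmodp-HM97}*{Th.~C}, use naturality along $\pi\:\ell/p\to H\Z/p$ (where $g$ is split surjective) together with the $(2p-1)$-connectedness of $\pi_*$ on $TC$ to conclude $g$ is surjective on $V(1)$-homotopy, and then identify $V(1)_*K(\ell/p)$ with $\ker(g)$. You are merely more explicit than the paper in pinning down $g$ degree-by-degree (checking that $\partial$ and $\partial\bar\epsilon_1$ are the only classes in degrees $-1$ and $2p-2$) and in spelling out the regrouping of the eight $P(v_2)$-summands of $P(v_2)\otimes E(\partial,\bar\epsilon_1,\lambda_2)$ after removing the bottom classes of $P(v_2)\{\partial\}$ and $P(v_2)\{\partial\bar\epsilon_1\}$; the paper compresses this to ``the kernel of the surjection $P(v_2)\otimes E(\partial,\bar\epsilon_1,\lambda_2)\to\Sigma^{-1}E(\bar\epsilon_1)$ gives the first row.''
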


\begin{proof}
In the case $B = \Z/p$, $K(\Z/p)_p \simeq H\Z_p$ and the map $g$ is
split surjective up to homotopy.  So the induced homomorphism to $V(1)_*
\Sigma^{-1} H\Z_p = \Sigma^{-1} E(\bar\epsilon_1)$ is surjective.  Since
$\pi \: \ell/p \to \Z/p$ induces a $(2p-1)$-connected map in topological
cyclic homology, and $\Sigma^{-1} E(\bar\epsilon_1)$ is concentrated
in degrees $\le (2p-2)$, it follows by naturality that also in the
case $B = \ell/p$ the map $g$ induces a surjection in $V(1)$-homotopy.
The kernel of the surjection $P(v_2) \otimes E(\partial, \bar\epsilon_1,
\lambda_2) \to \Sigma^{-1} E(\bar\epsilon_1)$ gives the first row in
the asserted formula.
\end{proof}

\begin{acknowledgements}
 We would like to thank the referee for
 his many useful suggestions. Both authors were
 partially supported by the Max Planck Institute for
 Mathematics, Bonn.
\end{acknowledgements}

\begin{bibdiv}
\begin{biblist}

\bib{kkmodp-An08}{article}{
  author={Angeltveit, V.},
  title={Topological Hochschild homology and cohomology of $A_\infty $ ring spectra},
  journal={Geom. Topol.},
  volume={12},
  date={2008},
  number={2},
  pages={987--1032},
}

\bib{kkmodp-Ang}{article}{
  author={Angeltveit, V.},
  title={Uniqueness of Morava $K$-theory},
  journal={to appear in Compos. Math.},
  eprint={arXiv:math.AT/0810.5032},
}

\bib{kkmodp-AnR05}{article}{
  author={Angeltveit, V.},
  author={Rognes, J.},
  title={Hopf algebra structure on topological Hochschild homology},
  journal={Algebr. Geom. Topol.},
  volume={5},
  date={2005},
  pages={1223--1290},
}

\bib{kkmodp-Au05}{article}{
  author={Ausoni, Ch.},
  title={Topological Hochschild homology of connective complex $K$-theory},
  journal={Amer. J. Math.},
  volume={127},
  date={2005},
  number={6},
  pages={1261--1313},
}

\bib{kkmodp-Au10}{article}{
  author={Ausoni, Ch.},
  title={On the algebraic $K$-theory of the connective complex $K$-theory spectrum},
  journal={Invent. Math.},
  volume={180},
  number={3},
  date={2010},
  pages={611--668},
}

\bib{kkmodp-AR02}{article}{
  author={Ausoni, Ch.},
  author={Rognes, J.},
  title={Algebraic $K$-theory of topological $K$-theory},
  journal={Acta Math.},
  volume={188},
  date={2002},
  number={1},
  pages={1--39},
}

\bib{kkmodp-ARff}{article}{
  author={Ausoni, Ch.},
  author={Rognes, J.},
  title={Algebraic $K$-theory of the fraction field of topological K-theory},
  note={Preprint},
  eprint={arXiv:math.KT/0911.4781},
}

\bib{kkmodp-BJ02}{article}{
  author={Baker, A.},
  author={Jeanneret, A.},
  title={Brave new Hopf algebroids and extensions of $M$U-algebras},
  journal={Homology Homotopy Appl.},
  volume={4},
  date={2002},
  number={1},
}

\bib{kkmodp-BaR05}{article}{
  author={Baker, A.},
  author={Richter, B.},
  title={On the $\Gamma $-cohomology of rings of numerical polynomials 
  and $E_\infty $ structures on $K$-theory},
  journal={Comment. Math. Helv.},
  volume={80},
  date={2005},
  number={4},
  pages={691--723},
}

\bib{kkmodp-Ben98}{book}{
   author={Benson, D. J.},
   title={Representations and cohomology. I},
   series={Cambridge Studies in Advanced Mathematics},
   volume={30},
   edition={2},
   publisher={Cambridge University Press},
   place={Cambridge},
   date={1998},
   pages={xii+246},
}

\bib{kkmodp-BM08}{article}{
  author={Blumberg, A. J.},
  author={Mandell, M. A.},
  title={The localization sequence for the algebraic $K$-theory of topological $K$-theory},
  journal={Acta Math.},
  volume={200},
  date={2008},
  number={2},
  pages={155--179},
}

\bib{kkmodp-Bo2}{article}{
  author={B{\"o}kstedt, M.},
  title={Topological Hochschild homology of $\mathbb {Z}$ and $\mathbb {Z}/p$},
  note={Unpublished preprint},
}

\bib{kkmodp-BBLNR}{article}{
  author={B{\"o}kstedt, M.},
  author={Bruner, B.},
  author={Lun{\o }e-Nielsen, S.},
  author={Rognes, J.},
  title={On cyclic fixed points of spectra},
  note={Preprint},
  eprint={arXiv:math.AT/0712.3476},
}

\bib{kkmodp-BHM93}{article}{
  author={B{\"o}kstedt, M.},
  author={Hsiang, W. C.},
  author={Madsen, I.},
  title={The cyclotomic trace and algebraic $K$-theory of spaces},
  journal={Invent. Math.},
  volume={111},
  date={1993},
  number={3},
  pages={465--539},
}

\bib{kkmodp-BM94}{article}{
  author={B{\"o}kstedt, M.},
  author={Madsen, I.},
  title={Topological cyclic homology of the integers},
  note={$K$-theory (Strasbourg, 1992)},
  journal={Ast\'erisque},
  number={226},
  date={1994},
  pages={7--8, 57--143},
}

\bib{kkmodp-BM95}{article}{
  author={B{\"o}kstedt, M.},
  author={Madsen, I.},
  title={Algebraic {$K$}-theory of local number fields: the unramified case},
  booktitle={Prospects in topology (Princeton, NJ, 1994)},
  series={Ann. of Math. Stud.},
  volume={138},
  pages={28--57},
  publisher={Princeton Univ. Press},
  address={Princeton, NJ},
  date={1995},
}

\bib{kkmodp-BMMS86}{book}{
  author={Bruner, R. R.},
  author={May, J. P.},
  author={McClure, J. E.},
  author={Steinberger, M.},
  title={$H_\infty $ ring spectra and their applications},
  series={Lecture Notes in Mathematics},
  volume={1176},
  publisher={Springer-Verlag},
  place={Berlin},
  date={1986},
  pages={viii+388},
  isbn={3-540-16434-0},
}

\bib{kkmodp-Du97}{article}{
  author={Dundas, B. I.},
  title={Relative $K$-theory and topological cyclic homology},
  journal={Acta Math.},
  volume={179},
  date={1997},
  number={2},
  pages={223--242},
}

\bib{kkmodp-Du04}{article}{
  author={Dundas, B. I.},
  title={The cyclotomic trace for $S$-algebras},
  journal={J. London Math. Soc. (2)},
  volume={70},
  date={2004},
  number={3},
  pages={659--677},
}

\bib{kkmodp-EKMM97}{book}{
  author={Elmendorf, A. D.},
  author={Kriz, I.},
  author={Mandell, M. A.},
  author={May, J. P.},
  title={Rings, modules, and algebras in stable homotopy theory},
  series={Mathematical Surveys and Monographs},
  volume={47},
  note={With an appendix by M. Cole},
  publisher={American Mathematical Society},
  place={Providence, RI},
  date={1997},
  pages={xii+249},
  isbn={0-8218-0638-6},
}

\bib{kkmodp-GM95}{article}{
  author={Greenlees, J. P. C.},
  author={May, J. P.},
  title={Generalized Tate cohomology},
  journal={Mem. Amer. Math. Soc.},
  volume={113},
  date={1995},
  number={543},
  pages={viii+178},
}

\bib{kkmodp-HM97}{article}{
  author={Hesselholt, L.},
  author={Madsen, I.},
  title={On the $K$-theory of finite algebras over Witt vectors of perfect fields},
  journal={Topology},
  volume={36},
  date={1997},
  number={1},
  pages={29--101},
}

\bib{kkmodp-HM03}{article}{
  author={Hesselholt, L.},
  author={Madsen, I.},
  title={On the $K$-theory of local fields},
  journal={Ann. of Math. (2)},
  volume={158},
  date={2003},
  number={1},
  pages={1--113},
}

\bib{kkmodp-Hu96}{article}{
  author={Hunter, T. J.},
  title={On the homology spectral sequence for topological Hochschild homology},
  journal={Trans. Amer. Math. Soc.},
  volume={348},
  date={1996},
  number={10},
  pages={3941--3953},
}

\bib{kkmodp-La01}{article}{
  author={Lazarev, A.},
  title={Homotopy theory of $A\sb \infty $ ring spectra and applications to $M{\rm U}$-modules},
  journal={$K$-Theory},
  volume={24},
  date={2001},
  number={3},
  pages={243--281},
}

\bib{kkmodp-Lo98}{book}{
   author={Loday, J.-L.},
   title={Cyclic homology},
   series={Grundlehren der Mathematischen Wissenschaften},
   volume={301},
   publisher={Springer-Verlag},
   date={1998},
   pages={xx+513},
}

\bib{kkmodp-MS93}{article}{
  author={McClure, J. E.},
  author={Staffeldt, R. E.},
  title={On the topological Hochschild homology of $bu$, I},
  journal={Amer. J. Math.},
  volume={115},
  date={1993},
  number={1},
  pages={1--45},
}

\bib{kkmodp-Mi58}{article}{
  author={Milnor, J.},
  title={The Steenrod algebra and its dual},
  journal={Ann. of Math. (2)},
  volume={67},
  date={1958},
  pages={150--171},
}

\bib{kkmodp-MM65}{article}{
  author={Milnor, J. W.},
  author={Moore, J. C.},
  title={On the structure of Hopf algebras},
  journal={Ann. of Math. (2)},
  volume={81},
  date={1965},
  pages={211--264},
}

\bib{kkmodp-Ok84}{article}{
  author={Oka, S.},
  title={Multiplicative structure of finite ring spectra and stable homotopy of spheres},
  conference={ title={Algebraic topology, Aarhus 1982}, address={Aarhus}, date={1982}, },
  book={ series={Lecture Notes in Math.}, volume={1051}, publisher={Springer}, place={Berlin}, },
  date={1984},
  pages={418--441},
}

\bib{kkmodp-Qu72}{article}{
  author={Quillen, D.},
  title={On the cohomology and $K$-theory of the general linear groups over a finite field},
  journal={Ann. of Math. (2)},
  volume={96},
  date={1972},
  pages={552--586},
}

\bib{kkmodp-Rog98}{article}{
  author={Rognes, J.},
  title={Trace maps from the algebraic $K$-theory of the integers (after Marcel B\"okstedt)},
  journal={J. Pure Appl. Algebra},
  volume={125},
  date={1998},
  number={1-3},
  pages={277--286},
}

\bib{kkmodp-Rog08}{article}{
  author={Rognes, J.},
  title={Galois extensions of structured ring spectra},
  journal={Mem. Amer. Math. Soc.},
  volume={192},
  number={898},
  date={2008},
  pages={1--97},
}

\bib{kkmodp-Sm70}{article}{
  author={Smith, L.},
  title={On realizing complex bordism modules. Applications to the stable homotopy of spheres},
  journal={Amer. J. Math.},
  volume={92},
  date={1970},
  pages={793--856},
}

\bib{kkmodp-To71}{article}{
   author={Toda, H.},
    title={On spectra realizing exterior parts of the
    Steenrod algebra},
    journal={Topology},
    volume={10},
    date={1971},
    pages={53--65},
}

\bib{kkmodp-Ts98}{article}{
  author={Tsalidis, S.},
  title={Topological Hochschild homology and the homotopy descent problem},
  journal={Topology},
  volume={37},
  date={1998},
  number={4},
  pages={913--934},
}

\bib{kkmodp-Wh62}{article}{
   author={Whitehead, George W.},
   title={Generalized homology theories},
   journal={Trans. Amer. Math. Soc.},
   volume={102},
   date={1962},
   pages={227--283},
}

\end{biblist}
\end{bibdiv}
 
\end{document}